\documentclass{article}
\usepackage{macros}
\usepackage{xy}
\usepackage{comment}
\xyoption{all}

\newcommand\subfrac[2]{\genfrac{}{}{0pt}{}{#1}{#2}}

\makeatletter
\newcommand{\subjclass}[2][1991]{%
  \let\@oldtitle\@title%
  \gdef\@title{\@oldtitle\footnotetext{#1 \emph{Mathematics subject classification.} #2}}%
}
\newcommand{\keywords}[1]{%
  \let\@@oldtitle\@title%
  \gdef\@title{\@@oldtitle\footnotetext{\emph{Key words and phrases.} #1.}}%
}
\makeatother

\title{Intersections of Hecke correspondences on the modular varieties of $\mathcal{D}$-elliptic sheaves}
\author{\"Ozge \"Ulkem \footnote{
		{Institute of Mathematics, Academia Sinica, Astronomy Mathematics Building, No. 1,
Roosevelt Rd. Sec. 4, Taipei, Taiwan, 10617}} \and 
 Fu-Tsun Wei \footnote{Department of Mathematics, National Tsing Hua University and National Center for Theoretical Sciences, Hsinchu City 30042, Taiwan R.O.C}
 }
\date{}

\begin{document}

\subjclass[2020]{Primary 11R58, 11G18}

\keywords{$\cD$-elliptic Sheaf, Hecke Correspondence, Intersection, Class Number Relation}

\maketitle

\begin{abstract}
This paper studies the intersections of Hecke correspondences on the modular varieties of \( \cD \)-elliptic sheaves in the higher-rank setting, where \( \cD \) is a ``maximal order'' in a central division algebra \( D \) over a global function field \( k \).  
Assuming that \( \dim_k(D) = r^2 \), where \( r \) is a prime distinct from the characteristic of \( k \), we express the intersection numbers of Hecke correspondences as suitable combinations of modified Hurwitz class numbers of ``imaginary orders''. This result establishes a higher-rank analogue of the classical class number relation.

\end{abstract}

\section{Introduction}

\indent Let $X$ denote the modular curve of full level. For each non-square $n \in \mathbb{N}$, the Hecke correspondence $T_n$ on $X$ corresponds to a $1$-cycle $\mathcal{Z}_n$ on $X \times X$. The geometric version of the celebrated class number relation (initiated by Kronecker \cite{Kron} and extended by Hurwitz \cite{Hur}) expresses the ``finite part'' of the intersection number $i(\mathcal{Z}_1 \cdot \mathcal{Z}_n)$ (i.e.\ excluding the ``cuspidal intersection'') as 
\[
i(\mathcal{Z}_1 \cdot \mathcal{Z}_n)_{\rm fin} = \sum_{\subfrac{t \in \mathbb{Z}}{t^2 < 4n}} H(t^2 - 4n) \quad \big(= \sum_{\subfrac{m \in \mathbb{N}}{m \mid n}} \max(m, n/m)\big).
\]
Here, $H(N)$ denotes the ``Hurwitz class number'' for $N \in \mathbb{N}$. The fact that these intersection points are actually ``CM'' provides a conceptual foundation for the proof of this relation, as it connects the intersection in question to Eichler's theory of optimal embeddings of imaginary quadratic orders into matrix rings. This phenomenon is also evident in the context of Hilbert modular surfaces associated with real quadratic fields, as demonstrated in the celebrated work of Hirzebruch and Zagier \cite{H-W}. Since then, the arithmetic significance of intersection numbers between special cycles on modular varieties has been extensively studied (see \cite{K-M} and \cite{K-MI}), leading to the so-called ``geometric Siegel-Weil formula'' with profound applications (see \cite{Kud1}, \cite{Cog} \cite{K-MII}, \cite{Kud2}, \cite{Fun}).

The primary aim of this paper is to explore this phenomenon within the function field setting and to describe the intersection numbers of ``Hecke correspondences'' on the modular varieties of $\cD$-elliptic sheaves in higher rank, expressed in terms of class numbers.
For the case where the base field is a rational function field with a finite constant field, an analogue of the Hurwitz–Kronecker class number relation (for Drinfeld modular curves) has been derived in \cite{JKY} and \cite{WgY} through a detailed study of Drinfeld modular polynomials in \cite{Bae}, \cite{B-L}, and \cite{Hsia}.
Additionally, a Hirzebruch–Zagier-type formula was established in \cite{GWei0}, which relates the intersection numbers of Hirzebruch–Zagier-type divisors on Drinfeld–Stuhler modular surfaces to modified Hurwitz class numbers through a bridge established from a particular theta integral, which is a ``Drinfeld-type'' automorphic form.
This paper represents our initial effort to extend these ideas to the ``higher rank'' setting. To streamline the discussion, we focus on the case of modular varieties of $\cD$-elliptic sheaves where $\cD$ is a ``maximal order'' of a central division algebra $D$ of dimension $r^2$ over a global function field $k$, with $r$ being a prime number distinct from the characteristic of $k$.

Fix a place $\infty$ of the global function field $k$, which we regard as the place at {\it infinity}. Let $k_{\infty}$ denote the completion of $k$ at $\infty$, and let $\bC_{\infty}$ be the completion of an algebraic closure of $k_{\infty}$.
Let $X$ be the (coarse) moduli scheme of $\cD$-elliptic sheaves defined over $\bC_{\infty}$. For each nonzero ideal $\fa$ of $A$, where $A$ is the ring of elements in $k$ that are regular away from $\infty$, we define an $(r-1)$-cycle $\cZ_{\fa}$ on $X \times X$ in \eqref{eqn: Za} arising from the Hecke correspondences on $X$ (analogous to the classical case). In particular, the cycle $\cZ := \cZ_1$ represents the diagonal cycle of $X$ on $X \times X$.
The main theorem of this paper is stated as follows:

\begin{theorem}\label{thm: MT}
{\rm (Theorem~\ref{thm: C-R})} Keep the above notation and the assumption that $r$ is a prime number distinct from the characteristic of $k$.
Then for each nonzero ideal $\fa$ of $A$, the intersection number of $\cZ$ and $\cZ_{\fa}$ is equal to
\[
i(\cZ\cdot \cZ_{\fa}) =
\frac{r}{q-1}\cdot \left(\sum_{\vec{c}} H^D(\vec{c})
+\sum_{c}H^D(0)\right),
\]
where:
\begin{itemize}
    \item $q$ is the cardinality of the constant field of $k$;
    \item the first sum runs through all $\vec{c}=(c_1,...,c_r) \in A^r$ with $c_r \cdot A = \fa$ so that $f_{\vec{c}}(x):= x^r+c_1x^{r-1}+\cdots+ c_r \in A[x]$ is irreducible over $k$ and the field $K_{\vec{c}}:=k[x]/(f_{\vec{c}}(x))$ is imaginary over $k$ (i.e.\ $\infty$ does not split in $K_{\vec{c}}$);
    \item the second sum runs through all $c \in A$ with $c^r \cdot A = \fa$;
    \item $H^D(\vec{c})$ is the modified Hurwitz class number of $R_{\vec{c}}:= A[x]/f_{\vec{c}}(x)$ with respect to $D$ in Definition~\ref{def: MHC};
    \item $H^D(0)$ is a ``volume quantity'' introduced in \eqref{eqn: H(c)}.
\end{itemize}
\end{theorem}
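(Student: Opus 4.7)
The plan is to give a moduli-theoretic description of $\cZ \cap \cZ_\fa$ and then count these points, weighted by local intersection multiplicities. Since $\cZ$ is the diagonal and $\cZ_\fa$ is the cycle of the Hecke correspondence $T_\fa$, an intersection point should correspond to a pair $(\underline E, \phi)$ consisting of a $\cD$-elliptic sheaf $\underline E$ and a self-isogeny $\phi \colon \underline E \to \underline E$ whose ``norm ideal'' equals $\fa$. Using the deformation theory of $\cD$-elliptic sheaves developed by Laumon--Rapoport--Stuhler, one expresses the local intersection multiplicity at each such $(\underline E, \phi)$ in terms of a local embedding problem at each place of ramification of $D$.

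Next, I would organize these pairs according to the reduced characteristic polynomial
\[
f_\phi(x) = x^r + c_1 x^{r-1} + \cdots + c_r \in A[x], \qquad c_r \cdot A = \fa,
\]
arising from the action of $\phi$ on the generic fibre of $\underline E$. A Drinfeld / Honda--Tate type theorem for $\cD$-elliptic sheaves shows that $k(\phi) = k[x]/(f_\phi)$ must embed into $D$; since $D$ is ramified at $\infty$, any proper subfield of $D$ over $k$ is imaginary. This produces exactly the two classes of contributions in the statement: either $f_\phi$ is irreducible with $K_{\vec c}$ imaginary (the ``generic'' case), or $\phi$ is a scalar $c \in A$ with $c^r \cdot A = \fa$ (the ``degenerate'' case).

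For each fixed irreducible $\vec c$, I would exhibit a bijection between the pairs $(\underline E, \phi)$ with $f_\phi = f_{\vec c}$, weighted by local intersection multiplicity, and the $\cD^\times$-orbits of optimal $A$-algebra embeddings $R_{\vec c} \hookrightarrow \mathrm{End}(\underline E)$, where $\underline E$ ranges over the isogeny class determined by $\vec c$. Once the local intersection multiplicities at the ramified places of $D$ are packaged as local weights, this weighted count is by construction the modified Hurwitz class number $H^D(\vec c)$ of Definition~\ref{def: MHC}. The scalar case produces the volume-type quantity $H^D(0)$ in the same framework. Summing over all admissible $\vec c$ and over all $c \in A$ with $c^r A = \fa$, while tracking the $\mathbb{F}_q^\times$-symmetry that scales $\phi$ by a constant (which together with the $r$ choices of ``uniformizer-level'' data at $\infty$ accounts for the global factor $r/(q-1)$), yields the claimed identity.

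The main obstacle is the precise computation of the local intersection multiplicity at each intersection point, particularly at ``supersingular'' points and at the place $\infty$ where $D$ is ramified. This requires a careful analysis of the formal deformation space of a $\cD$-elliptic sheaf together with a prescribed endomorphism, generalizing the classical Eichler theory of optimal embeddings to the rank-$r$ setting, and then matching these local contributions term-by-term with the local factors built into $H^D(\vec c)$.
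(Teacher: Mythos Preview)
Your overall strategy---parametrize intersection points by self-isogenies and sort by characteristic polynomial---is morally aligned with the paper, but several of the load-bearing steps are either wrong or missing.

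First, a factual error: you write ``since $D$ is ramified at $\infty$'', but the paper's standing hypothesis is precisely the opposite, $\infty \notin \mathbf{Ram}$ (so $\cD_\infty \cong \bM_r(\cO_\infty)$). This is not cosmetic: the rigid-analytic uniformization $X(\fn)(\bC_\infty)\cong D^*\backslash \Omega_r\times D^*(\bA_f)/\cK(\fn)$ on which the entire argument rests requires $D$ to be split at $\infty$. Consequently your justification for why $k(\phi)$ is imaginary is invalid. The actual reason, in the paper's framework, is that $\gamma\in D^*$ fixes a point $z\in\Omega_r$, and $K_z=\{\gamma:\gamma z=z\}\cup\{0\}$ is an imaginary subfield of $D$ (see \cite[Lemma~4.6]{Pap}, invoked in the proof of Lemma~\ref{lem: emb-2}).

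Second, the scalar case $\phi=c\in A$ cannot be handled ``in the same framework'' of weighted point counts. When $\phi$ is scalar, \emph{every} point of $X$ supports such a $\phi$, so the corresponding cycle $\cZ_g$ equals $\cZ$ and you are computing a self-intersection of the diagonal. This is not a finite point count at all: it is the Euler--Poincar\'e characteristic $\chi(X(\fn))$, which the paper obtains via Kurihara's proportionality theorem (an analogue of Gauss--Bonnet) as a volume, leading to the explicit quantity $H^D(0)$ in~\eqref{eqn: H(c)}. Your proposal contains no mechanism to produce this.

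Third, you work directly on $X=X(1)$, but the coarse moduli scheme need not be smooth over $\bC_\infty$, so naive intersection multiplicities are not available. The paper addresses this by passing to a smooth Galois cover $X(\fn)$, using Briney's intersection theory for quotients, and descending via a projection formula (Theorem~\ref{ex: proj-formula}). On $X(\fn)\times X(\fn)$ the paper then proves \emph{transversality} of $\cZ(\fn)\cdot\cZ(\fn,g)$ (when distinct) by an explicit tangent-space calculation in $\Omega_r$; this is where the hypothesis that $r$ is prime to the characteristic is used, to guarantee separability and hence diagonalizability of $\gamma$. Your deformation-theoretic substitute would need to recover both the smoothness workaround and this transversality.

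Finally, your accounting for the factor $r/(q-1)$ is off. The $(q-1)$ comes from the $\bF_q^*$-action built into the projection formula. The factor $r$ arises not from ``uniformizer-level data at $\infty$'' but from the fact that a separable $\gamma\in D^*$ with $[k(\gamma):k]=r$ has exactly $r$ fixed points on $\Omega_r$ (one per eigenvalue), and these give $r$ distinct classes in $D^*\backslash\cS(\fn,g)/\cK(\fn,g)$; see Corollary~\ref{cor: emb}.
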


\begin{remark}
As $D$ is a division algebra, there is no ``cuspidal intersection'' in our situation.
Also, the assumption that $r$ is a prime ensures that the components in the intersection $\cZ \cdot \cZ_{\fa}$ have dimensions either $0$ or $r-1$.
For composite $r$, the intersection behavior may involve components associated with moduli schemes of $\cD'$-elliptic sheaves, where $\cD'$ is a non-maximal order in $D'$, and $D'$ is a central simple algebra over a finite extension $k'$ of $k$, which may not be a division algebra.
Furthermore, the condition that $r$ is distinct from the characteristic of $k$ ensures that the imaginary fields corresponding to the intersection points are separable over $k$. This separability guarantees the transversality of the intersection behavior of the pullback of these cycles in suitable finite coverings, enabling the intersection number to be determined simply by counting the intersection points of the pullback cycles.
Without these assumptions, the overall structure of the intersections becomes significantly more complex and technical, which is why we impose these conditions in our first attempt to study higher rank cases in this paper.
\end{remark}

\subsection{Outline of the proof of Theorem~\ref{thm: MT}}

As the (coarse) modular variety $X$, which parametrizes \( \cD \)-elliptic sheaves, may not be smooth over \( \bC_\infty \), we use Briney's theory to study the intersections in question.
To address this, we consider a finite Galois covering \( X(\fn) \) of  $X$, where \( X(\fn) \) is the modular variety of \( \cD \)-elliptic sheaves equipped with a level \( \fn \)-structure for a suitable nonzero ideal \( \fn \) of  $A$. This covering ensures that \( X(\fn) \) is smooth over \( \bC_\infty \).
Using the projection formula, we can reformulate the intersection number of interest in terms of the corresponding pullback cycles on \( X(\fn) \times X(\fn) \) as follows (see Theorem~\ref{ex: proj-formula}):
\begin{equation}\label{eqn: PJF}
i(\cZ \cdot \cZ_{\fa}) = \frac{1}{(q-1)
\cdot [X(\fn):X]}\sum_{\tilde{g}}i(\cZ(\fn)\cdot \cZ(\fn,g)),
\end{equation}
where $[X(\fn):X]$ is the degree of the ``Galois covering'' $X(\fn)$ over $X$; $\cZ(\fn)$ is the diagonal cycle of $X(\fn)$ on $X(\fn)\times X(\fn)$; $\cZ(\fn,g)$ is introduced in \eqref{eqn: Zng}; and the sum runs through a double coset space corresponding to the Hecke correspondence associated with $\fa$ on $X(\fn)$.

Next, the rigid-analytic structure of these varieties allows us to establish the transversality of the intersection \( \cZ(\fn) \cdot \cZ(\fn, g) \) when \( \cZ(\fn) \) and \( \cZ(\fn, g) \) are distinct. This transversality ensures that \( i(\cZ(\fn) \cdot \cZ(\fn, g)) \) can be computed simply by counting the number of their intersection points.

Furthermore, through a sequence of double-coset comparisons, we relate these intersection numbers to the number of optimal embeddings of the corresponding imaginary orders into \( \cD^\infty \) (see Corollary~\ref{cor: emb}). Additionally, the self-intersection number of \( \cZ(\fn) \) is shown to equal the Euler–Poincaré characteristic of \( X(\fn) \), which can be interpreted as a volume quantity via an analogue of the Gauss–Bonnet formula established by Kurihara in \cite{Ku} (see Proposition~\ref{prop: self-int}).

	Finally, by applying Eichler’s theory, we express these intersection numbers in terms of modified Hurwitz class numbers, and complete the proof of Theorem~\ref{thm: MT}.

\begin{remark}
The recent groundbreaking works of Feng, Yun and Zhang in \cite{FYZ1} and \cite{FYZ2} establish a ``higher'' Siegel--Weil formula in the function field setting, showing a function field analogue of the Kudla–Rapoport conjecture.
They express the non-singular Fourier coefficients of all central derivatives of Siegel–Eisenstein series on unitary groups in terms of the degrees of corresponding special cycles on the moduli stack of unitary shtukas.
In contrast, our result takes a complementary perspective in the case where the group is $D^*$ (an inner form of $\GL_r$): we write the intersection numbers of specific cycles on the modular varieties of $\cD$-elliptic sheaves as combinations of the class numbers of imaginary orders, which are natural arithmetic invariants.

Notably, when $r = 2$, we would be able to adapt the method in \cite{GWei0} and \cite{GWei}, utilizing the Weil representation, to explicitly construct an automorphic form whose Fourier coefficients correspond to the intersection numbers in question.
Although this method does not appear to generalize to $r > 2$, we believe that a deeper connection with automorphic forms on $D^*$ persists and remains to be fully understood. This intriguing connection will be investigated in future work.
\end{remark}

\subsection{Content}

This paper is organized as follows. Section~\ref{sec: No} introduces the basic notation and preliminaries. In Section~\ref{sec: D-ell}, we recall the definition of $\cD$-elliptic sheaves and discuss level structures. The moduli spaces of $\cD$-elliptic sheaves and the Hecke correspondences are presented in Section~\ref{sec: M-Dell}. Section~\ref{sec: Briney} reviews Briney’s extension of intersection theory, leading to the derivation of equality~\eqref{eqn: PJF} in Section~\ref{sec: Proj-F}.

The transversality of the non-self-intersection $\cZ(\fn) \cdot \cZ(\fn,g)$ is verified in Section~\ref{sec: tran-int} from a rigid-analytic perspective. In Section~\ref{sec: count}, we perform several technical double-coset comparisons to connect the intersection points to optimal embeddings of imaginary orders and to express the self-intersection of $\cZ(\fn)$ in terms of a volume quantity. Finally, Section~\ref{sec: CR} connects the intersections in question with the modified Hurwitz class numbers and completes the proof of Theorem~\ref{thm: MT}. For completeness, a brief review of Eichler's theory of optimal embeddings is included in Appendix~\ref{sec: Eich}.

\subsubsection*{Acknowledgments}
Part of this work was carried out during the first author’s visit to the National Center for Theoretical Sciences (NCTS).
The authors sincerely thank the NCTS for its generous support of this collaboration.
The first author is supported by Academia Sinica Investigator Grant AS-IA-112-M01 and NSTC grant 113-2115-M-001-001.
The second author is supported by the NSTC grant 109-2115-M-007-017-MY5, 113-2628-M-007-003, and the NCTS.

\section{Notation}\label{sec: No}

Let $\bF_q$ denote the finite field with $q$ elements.
Let $C$ be a geometrically connected smooth projective curve over a finite field $\bF_q$ with function field $k$. We denote the set of closed points of $C$ by $|C|$, identifying with the set of places of $k$. Fix one closed point $\infty$ of $C$, referred to the place at infinity. Let $A:=\Gamma(C\setminus\{\infty\}, \cO_C)$ denote the ring of regular functions outside of $\infty$ where $\cO_C$ is the structure sheaf of $C$.

Given $v\in |C|$, the completion of $k$ at $v$ is denoted by $k_v$, and let $\cO_v$ be the maximal compact subring of $k_v$. 
We define the adele ring of $k$ to be the restricted product of the completions $k_v$ at the places of $k$: $$\bA:=\prod_{v\in |C|} \!\! {}^{\mathlarger{'}}\,\, k_v$$
\noindent and the ring of finite adeles as the restricted product of the completions $k_v$ at the finite places:
$$\bA_f:=\prod_{v\in |C|\setminus \{\infty \}} 
\hspace{-0.5cm} {}^{\mathlarger{'}}\,\, k_v.$$
\noindent The maximal compact subring of $\bA_f$ is $\cO^{\infty}:= \prod_{v\in |C|\setminus \{\infty \}} \cO_v$.

Let $D$ be a central division algebra over $k$ of dimension $r^2$.
We say that $D$ is {\it ramified at a place $v$} if $D_v:= D\otimes_k k_v$ is not isomorphic to the matrix algebra $\bM_r(k_v)$.
Let $\bf{Ram}$ denote the set of ramified places of $D$.
Here we assume that $\infty \notin \bf{Ram}$.
Let $\cD$ be a locally free sheaf of $\cO_C$-algebras such that the stalk of $\cD$ at the generic point $\eta$ of $C$ is the division algebra $D$, i.e, $\cD_{(\eta)}=D$.
Throughout this paper, we always assume that $\cD$ is a maximal $\cO_C$-order in $D$.
For a place $v$ of $k$, we define
\noindent  $$\cD_v:=\cD_{(v)} \otimes_{\cO_{C,v}} \cO_v,$$
where $\cD_{(v)}$ (resp.~$\cO_{C,v}$) is the stalk of $\cD$ (resp.~$\cO_C$) at $v$. We put 
\[
\cD^{\infty}=\prod_{v\in |C|\setminus \{\infty \}} \cD_v.
\]
In particular, our assumption implies that $\cD_\infty \cong \bM_r(\cO_\infty)$, the matrix ring with entries in $\cO_{\infty}$.
Finally, let $\cO_D:=\Gamma(C \setminus \{\infty \}, \cD)$ be the global sections of $\cD$ away from $\infty$.
Then
\[
\cD^{\infty}= \cO_D\otimes_A \cO^{\infty}.
\]

\section{\texorpdfstring{$\cD$-elliptic sheaves}{D-elliptic sheaves}}
\label{sec: D-ell}

In this section, we recall the definition and some basic facts about $\cD$-elliptic sheaves. For more details on $\cD$-elliptic sheaves, please refer to \cite{LRS}. 
\subsection{Defintion of \texorpdfstring{$\cD$}{D}-elliptic sheaves}
First, we introduce some notations. For an $\bF_q$-scheme $S$, we denote the \emph{Frobenius endomorphism} on $S$ by
$$\sigma_S : S \longrightarrow S,$$ 
which is defined as the identity on points and as the $q$-power map on the structure sheaf. Note that for an $\bF_q$-scheme $S$, one can form the fiber product $C \times_{\bF_q} S$ over $\bF_q$. We will assume this is understood over $\bF_q$ and denote the fiber product simply by $C \times S$. Let 
$$\sigma := \operatorname{id}_C \times \sigma_S : C \times S \longrightarrow C \times S,$$ 
i.e., $\sigma$ acts on $C$ as the identity and on $S$ as the Frobenius endomorphism $\sigma_S$.

\begin{definition}\label{def-D-ellsh}
Let $S$ be a scheme over $\bF_q$. A $\cD$-elliptic sheaf over $S$ is a pair $(\ul{\cE}, \psi)$ where $\psi: S \longrightarrow (C \setminus {\bf Ram})$ is a morphism  and 
$\ul{\cE}$ is a  sequence $(\cE_i, j_i, t_i), i\in \bZ$. Here each $\cE_i$ is a locally free $\cO_{C \times S}$-modules of rank $r^2$ with a right $\cD$-action which is $\cO_{C\times S}$-linear and the restriction of $\cD$ to the scalars is same as the $\cO_C$-action. And the morphisms

$$j_i: \cE_i \longrightarrow \cE_{i+1}$$
$$t_i: \sigma^*\cE_i \longrightarrow \cE_{i+1}$$
\noindent are injective $\cO_{C \times S}$-linear morphisms which are compatible with the $\cD$-action such that the following diagram commutes 

\centerline{
\xymatrix{ & \cE_{i}  \ar@{^{(}->}[r]^{j_i} &  \cE_{i+1}\\
\sigma^* \cE_{i-1}  \ar@{^{(}->}[r]_{\sigma^* j_{i-1}} \ar@{->}[ur]^{t_{i-1}} &  \sigma^* \cE_{i} \ar@{->}[ur]^{t_i}
}
}

\noindent Moreover, for each $i\in \bZ$ the following conditions hold:

\begin{enumerate}
    \item (periodicity) Put $\ell = r\cdot \deg \infty$ We have
$$\cE_{i+\ell} = \cE_i(\infty)$$
\noindent where $\cE_i(\infty) = \cE_i \otimes_{\cO_{C \times S}}
 (\cO_C(\infty) \boxtimes \cO_S)$

    \item \label{cokerj} The $\coker j_i$ is supported on $\{\infty \} \times S$ and it is locally free $\cO_S$-module of rank $r$.
    
    \item The $\coker t_i$ has support on the graph of $\psi$ and is locally free of rank $r$ over $S$.

\end{enumerate}

The morphism $\psi$ in the definition of a $\cD$-elliptic sheaf is called the \emph{characteristic} of the $\cD$-elliptic sheaf.
For simplicity, we denote the $\cD$-elliptic sheaf by $\ul{\cE}$ if no confusion arises.
\end{definition}

A \emph{morphism} $f: \ul{\cE} \longrightarrow \ul{\cE}'$  between two $\cD$-elliptic sheaves $\ul{\cE}=(\cE_i, j_i, t_i)$ and $\ul{\cE}'=(\cE'_i, j'_i, t'_i)$ over $S$ is a sequence of morphisms of locally free $\cO_{C\times S}$-modules $f_i: \cE_i \longrightarrow \cE'_i$ which are compatible with $\cD$-action and with the morphisms $j_i$'s and $t_i$'s.

\begin{remark}\label{rem: Dellaffine}
    
Let $\ul{\cE}=(\cE_i, j_i, t_i)$ be a $\cD$-elliptic sheaf over $S$. By the condition (\ref{cokerj}) in Definition \ref{def-D-ellsh}, $H^0((C\setminus \infty)\times S, \cE_i)$ is independent of $i$. Moreover, if $S= \Spec R$ for a ring $R$, it becomes an $R[\tau]$-module where the $\tau$-action comes from the morphism $t_i: \sigma^* \cE_i \longrightarrow \cE_{i+1}$. This module is referred to as a \emph{Drinfeld-Stuhler $\cO_D$-module}. The concept was first implicitly introduced in \cite[Section 3]{LRS}, with further studies available in \cite{Pap}.

\end{remark}

Let $\Ell_{C,\cD}(S)$ denote the category whose objects are the $\cD$-elliptic sheaves over $S$ and whose morphisms are isomorphisms of $\cD$-elliptic sheaves. If $S' \longrightarrow S$ is a morphism of $\bF_q$-schemes, then taking pullback of a $\cD$-elliptic sheaf over $S$ gives us a $\cD$-elliptic sheaf over $S'$. This defines a fibered category 
$$S \longrightarrow \Ell_{C, \cD}(S)$$

\noindent over the category of $\bF_q$-schemes $Sch_{\bF_q}$. Moreover this gives us a stack, denoted by $\Ell_{C, \cD}$, with respect to fppf-topology. The proof follows as a special case of \cite[Proposition 2.12]{Ul}. 

\subsection{\texorpdfstring{$\cD$-elliptic sheaves with level structures}{D-elliptic sheaves with level structures}}

Now we will define level structures on $\cD$-elliptic sheaves. First, we want to recall the following definition:

\begin{definition}
	Let $X$ and $Y$ be two schemes, and let $pr_1: X \times Y \longrightarrow X$, $pr_2: X \times Y \longrightarrow Y$ be the natural projections.
    Given a locally free $\cO_X$-module $\cF$ and a locally free $\cO_Y$-module $\cG$, we define the \emph{external tensor product} of $\cF$ and $\cG$ as follows:	
	$$\cF \boxtimes \cG := pr_1^*(\cF) \otimes_{\cO_{X \times Y}} pr_2^*(\cG),$$	
    which is naturally a locally free $\cO_{X \times Y}$-module.
\end{definition}

Let $I$ be a finite closed subscheme of $C\setminus ({\bf Ram}\cup\{\infty\} \cup \operatorname{Im}\psi)$. Then, the restrictions $\cE_i|_{I\times S}$ are all isomorphic by the morphisms $j_i$'s (cf. Remark \ref{rem: Dellaffine}). Therefore it is independent of the chosen $i$, and we will denote this restriction simply by $\cE|_{I \times S}$.
We write $\cD_I$ for $\cD \otimes_{\cO_C} \cO_I$ where $\cO_I$ is the structure sheaf of $I$ , and let $H^0(I, \cD_I)$ be the ring of global sections of $\cD_I$. 

\begin{definition}\label{levelI}
	Let $S$ be an $\bF_q$-scheme and $\ul{\cE}=(\cE_i, j_i, t_i)$ be a $\cD$-elliptic sheaf over $S$. 
	Let $I$ be a finite closed subscheme of $C\setminus ({\bf Ram}\cup\{\infty\} \cup \operatorname{Im}\psi)$. A \emph{level $I$-structure} on $\ul{\cE}$ is an isomorphism of $\cO_{I\times S}$-modules 	
	$$\iota: \cD_I \boxtimes \cO_S \xrightarrow{\sim} \cE|_{I \times S}$$
	which is compatible with $\cD$-action and the Frobenius 
    endeomorphism $\sigma_S$ on $S$. 
\end{definition}

 \begin{remark}
    Throughout the paper, when we talk about level $I$-structures, $I$ is always a finite closed subscheme of $C\setminus ({\bf Ram}\cup\{\infty\} \cup \operatorname{Im}\psi)$.
\end{remark}

Let $\Ell_{C, \cD, I}(S)$ denote the category whose objects are $\cD$-elliptic sheaves over $S$ with level $I$-structure and whose morphisms are morphisms of $\cD$-elliptic sheaves that respect the level $I$-structure. As before, by using pullbacks one can define a fibered category $\Ell_{C, \cD, I}$ over $Sch_{\bF_q}$, which is a stack with respect to fppf-topology. For a detailed proof we refer to \cite[Proposition 2.19]{Ul}.

Now, let $\ul{\cE}=(\cE_i, j_i, t_i)$ be a $\cD$-elliptic sheaf over $S$ with level $I$-structure and let $Sch_S$ denote the category of schemes over $S$. One can define $t$-invariant elements functor 
$$E_I: Sch_S \longrightarrow H^0(I, \cD_I)\mbox{-modules}$$
by $T \mapsto (H^0(I\times T, \cE|_{I\times S}))^{t=id}$. We then have the following:

\begin{theorem}(\cite[Lemma 2.6]{LRS}, also see \cite[Theorem 2.22]{Ul})\label{thm: torsor1}
The functor $E_I$ is represented by a finite \'etale scheme over $S$ which is free of rank one over $H^0(I, \cD_I)$.
\end{theorem}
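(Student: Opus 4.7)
The plan is to combine two facts: the restriction $\cE|_{I\times S}$ is a locally free right $(\cD_I\boxtimes \cO_S)$-module of rank one, and the Frobenius structure $t$ restricts to an isomorphism on $I\times S$. Together these make $\cE|_{I\times S}$ into an \'etale $\sigma$-module, and the functor of $t$-invariants is then represented by a torsor under the constant group $H^0(I,\cD_I)$ via standard Frobenius descent.

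First, I would reduce to the geometry on $I\times S$. Because $\coker j_i$ is supported on $\{\infty\}\times S$ and $\coker t_i$ on the graph of $\psi$, while $I$ is disjoint from both $\{\infty\}$ and $\operatorname{Im}\psi$, each $j_i|_{I\times S}$ and $t_i|_{I\times S}$ is an isomorphism. The $j_i$'s identify all the $\cE_i|_{I\times S}$ into a single sheaf $\cE|_{I\times S}$, equipped with a $\cD$-linear isomorphism $t\colon \sigma^*(\cE|_{I\times S})\xrightarrow{\sim}\cE|_{I\times S}$. Since each $\cE_i$ is a right $\cD$-module locally free of rank $r^2$ over $\cO_{C\times S}$, and since $I$ avoids ${\bf Ram}$ so that $\cD_I$ is Azumaya, $\cE|_{I\times S}$ is locally free of rank one over $\cD_I\boxtimes \cO_S$.

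Next, I would establish representability. After Zariski-localizing on $S$ to trivialize $\cE|_{I\times S}\cong\cD_I\boxtimes\cO_S$ as a right module, $t$ corresponds to left multiplication by a unit $u\in(\cD_I\boxtimes\cO_S)^\times$ followed by $\sigma_S$. The functor $E_I$ then becomes the equalizer of the two morphisms $\cD_I\boxtimes\cO_S\rightrightarrows \cD_I\boxtimes\cO_S$ given by $x\mapsto x$ and $x\mapsto u\cdot \sigma_S(x)$, which is an Artin--Schreier-type system defining a finite \'etale $S$-scheme. This realizes $E_I$ as a finite \'etale cover of $S$.

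Finally, to identify the representing scheme as free of rank one over $H^0(I,\cD_I)$, I would pass to a finite \'etale cover $S'\to S$ on which $u$ can be written as a Lang coboundary $u=v\cdot\sigma_S(v)^{-1}$, for some $v\in(\cD_I\boxtimes \cO_{S'})^\times$. Over such $S'$, the change of variables $x\mapsto vx$ identifies the $t$-invariants canonically with the constant module scheme $\underline{H^0(I,\cD_I)}_{S'}$, and descent back to $S$ gives the desired rank-one structure. The main technical obstacle is this Lang-type trivialization: one must verify that $\sigma$-cohomology in the non-commutative unit group scheme $(\cD_I\boxtimes\cO_S)^\times$ vanishes \'etale-locally on $S$, which should follow from the smoothness and connectedness of its fibers over $S$ (an analogue of Lang's theorem for smooth affine group schemes in this setting).
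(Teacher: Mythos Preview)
The paper does not supply its own proof of this statement; it simply cites \cite[Lemma~2.6]{LRS} and \cite[Theorem~2.22]{Ul}. Your outline is essentially the standard argument one finds in those references: restrict to $I\times S$, observe that the $j_i$ and $t_i$ become isomorphisms there, recognize $\cE|_{I\times S}$ as a locally free rank-one right $(\cD_I\boxtimes\cO_S)$-module with an \'etale Frobenius structure, and then invoke Frobenius descent (Lang-type trivialization) to identify the $t$-invariants with the constant scheme $\underline{H^0(I,\cD_I)}$ \'etale-locally. So there is no meaningful divergence to report.

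One place where your write-up is slightly loose is the final step. Lang's theorem in its classical form applies to smooth connected algebraic groups over an algebraically closed (or finite) field; over a general $\bF_q$-scheme $S$ you need an extra argument. The standard fix is to work \'etale-locally on $S$, reduce to the case where $S$ is the spectrum of a strictly henselian local $\bF_q$-algebra, and then use smoothness of $(\cD_I\boxtimes\cO_S)^\times$ together with Lang over the residue field to lift the trivialization. Equivalently, one can appeal directly to the Katz--Drinfeld equivalence between \'etale $\phi$-modules on $I\times S$ and \'etale local systems of $H^0(I,\cO_I)$-modules on $S$, which packages exactly this argument. This is not a genuine gap, just a detail you should make explicit.
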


\begin{remark} \label{lem: torsor}(\cite[(4.8), page 240]{LRS})
For connected $S$, the set of $I$-level structures is a torsor over the unit group $D_I^*$. More precisely, Let $S$ be an $\bF_q$-scheme, $I \subset I' \subset C$ be two finite closed subschemes with $\deg(I')>0$. Then, the morphism $$r_{I',I}(S) : \Ell_{X, \cD, I'}(S) \longrightarrow \Ell_{X, \cD, I}(S)$$ 
	\noindent which associates a level $I'$-structure to its restriction gives us a $G_{I', I}$-torsor over $C \setminus I'$, i.e, the finite group $G_{I', I}:=\operatorname{Ker}(H^0(I', \cD_{I'})* \longrightarrow H^0(I, \cD_I)^*)$ acts on the set of level $I'$-structures transitively and freely. 
\end{remark}

\section{\texorpdfstring{Moduli schemes of $\cD$-elliptic sheaves and Hecke correspondences}{Moduli varieties of D-elliptic sheaves}}\label{sec: M-Dell}

We shall introduce the moduli schemes of $\cD$-elliptic sheaves and the cycles associated with the Hecke correspondences.

\subsection{\texorpdfstring{Moduli schemes of $\cD$-elliptic sheaves}{Moduli schemes of D-elliptic sheaves}}

In the previous section, we defined the stack $\Ell_{C, \cD, I}$ of $\cD$-elliptic sheaves with level $I$-structures. In fact, $\Ell_{C, \cD, I}$ is an algebraic stack in the sense of Deligne-Mumford (cf. \cite{DM}).

\begin{theorem}\label{thm-Dell-sch} (\cite[Theorem 4.1 and Theorem 5.1]{LRS})

The stack $\Ell_{C, \cD, I}$ is an algebraic stack in the sense of Deligne-Mumford which is smooth of relative dimension $(r - 1)$ over $C \setminus (I\cup \{\infty \})$. Moreover, if $I \neq \emptyset $, it is actually a quasi-projective scheme. 
\end{theorem}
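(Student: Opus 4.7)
The plan is to follow the approach of Laumon--Rapoport--Stuhler cited in the statement, splitting the argument into four stages: the algebraic stack property, the Deligne--Mumford property together with the scheme structure when $I\neq\emptyset$, smoothness with the dimension count via deformation theory, and finally quasi-projectivity. Although a $\cD$-elliptic sheaf involves a $\bZ$-indexed sequence, the periodicity $\cE_{i+\ell}=\cE_i(\infty)$ reduces the data to finitely many pieces $\cE_0,\dots,\cE_{\ell-1}$ together with the $j_i$, $t_i$, $\iota$, and $\psi$. I would construct a presenting scheme by working inside a relative Quot-scheme of locally free $\cO_{C\times S}$-modules of rank $r^2$ with right $\cD$-action and then imposing the commutativity of the diagram in Definition~\ref{def-D-ellsh}, the rank conditions on $\coker j_i$ and $\coker t_i$, periodicity, and the level-$I$ compatibility as locally closed conditions. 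Representability of the diagonal follows since for two such objects the isomorphism functor is cut out from $\operatorname{Hom}(\cE_0,\cE'_0)$ by finitely many compatibility constraints.

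For the Deligne--Mumford property and the scheme structure when $I\neq\emptyset$, I would use that over a field the automorphism group of a $\cD$-elliptic sheaf is contained in the units of the endomorphism ring of the associated Drinfeld--Stuhler $\cO_D$-module from Remark~\ref{rem: Dellaffine}, an $A$-order in a finite-dimensional division $k$-algebra, which is finite and \'etale. Applying Remark~\ref{lem: torsor} with $I'\supsetneq I$ large enough, one kills all residual automorphisms; so if $\Ell_{C,\cD,I'}$ is a scheme, then $\Ell_{C,\cD,I}$ is its quotient by the finite group $G_{I',I}$. A standard rigidification argument, exploiting that $\cD$-invariant sub-lattices detect automorphisms, shows that $I\neq\emptyset$ already forces all automorphisms to be trivial, so that $\Ell_{C,\cD,I}$ is genuinely a scheme.

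Smoothness and the dimension count come from the infinitesimal lifting criterion. Given a square-zero thickening $S_0\hookrightarrow S$ with ideal $\mathcal{I}$, I would lift the data $(\cE_i,j_i,t_i)$ piece by piece. The obstructions live in an $\operatorname{Ext}^2$-type group governing simultaneous deformations compatible with $j_i,t_i$, the $\cD$-action, and periodicity; these vanish because $\cD$ is a maximal order in a division algebra, so all the relevant local modules are hereditary. The torsor of lifts is controlled by the corresponding $\operatorname{Ext}^1$; after accounting for the rank-$r$ cokernel conditions, the support of $\coker t_i$ along the graph of $\psi$ (which sits in $C\setminus(I\cup\{\infty\})$), and the periodicity identification, the effective degrees of freedom reduce to $r-1$, mirroring the familiar computation for Drinfeld $A$-modules.

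The main obstacle is quasi-projectivity when $I\neq\emptyset$. The plan is to exploit boundedness: each $\cE_i$ has fixed rank $r^2$ and, up to periodicity-induced shifts, fixed Euler characteristic, so the family lands in a bounded locus of the relevant Quot-scheme; the locally closed conditions above then carve out a scheme of finite type over $C\setminus(I\cup\{\infty\})$. Separatedness is checked by a valuative criterion applied to $\cD$-stable lattice chains. An alternative route, which is the one used in \cite{LRS}, is to invoke the rigid-analytic uniformization of $\Ell_{C,\cD,I}\times_{C\setminus(\mathbf{Ram}\cup\{\infty\})}\Spec\bC_\infty$ by quotients of Drinfeld's symmetric space and descend quasi-projectivity via rigid-analytic GAGA from the known projective structure of the covering quotient. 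I expect the boundedness estimate and the valuative-criterion verification to be the most delicate technical points, while the rest of the argument is a routine verification once the tangent complex and automorphism description are in hand.
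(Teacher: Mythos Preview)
The paper does not give its own proof of this theorem: it is stated with a bare citation to \cite[Theorem~4.1 and Theorem~5.1]{LRS} and no argument is supplied. So there is nothing in the paper to compare your sketch against beyond the reference itself.

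That said, one point in your sketch deserves correction. You write that ``an alternative route, which is the one used in \cite{LRS}, is to invoke the rigid-analytic uniformization \ldots\ and descend quasi-projectivity via rigid-analytic GAGA.'' This is not what \cite{LRS} does, and in fact it cannot work in the order you suggest: the uniformization statements (Theorem~\ref{thm-unif} in the present paper, and the corresponding results in \cite{LRS} and \cite{BS}) presuppose that the moduli space already exists as a scheme, so using them to establish quasi-projectivity would be circular. Moreover, the uniformization only concerns the formal completion along the fiber at $\infty$ (or the base change to $\bC_\infty$), whereas quasi-projectivity is asserted over $C\setminus(I\cup\{\infty\})$. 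In \cite{LRS} quasi-projectivity is obtained directly from the construction of the moduli as a locally closed subscheme of a product of Quot-type schemes with bounded invariants, essentially along the lines of your first ``boundedness'' route; the rigid-analytic picture is a consequence, not an input. The remainder of your outline (presentation by Quot schemes, finiteness of automorphisms giving the Deligne--Mumford property, rigidity for $I\neq\emptyset$, and smoothness via the infinitesimal lifting criterion with an $\operatorname{Ext}$-computation) is broadly in the spirit of the cited argument.
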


\begin{remark}
    Let $\emptyset \neq I$ be a finite closed subscheme of $C$ such that $I \cap \{ \infty \} = \emptyset$. There exists a nonzero ideal $\fn_I$ of $A$ associated with this closed subscheme. Conversely, if we are given a nonzero ideal $\fn$ of $A$, we get a corresponding finite closed subscheme $I_{\fn}$ of $C$ away from $\infty$.

\end{remark}

\begin{definition}
Let 
$\fn$ be a nonzero ideal of $A$ with $v \nmid \fn$ for every $v \in {\bf Ram}$ and $I_\fn$ be the  finite closed subscheme of $C$ associated with $\fn$. We denote the representing scheme of $\Ell_{C, \cD, I_\fn}/\bZ$ by $X(\fn)$.

\end{definition}

Moreover, different from Drinfeld modular varieties, we have the following theorem: 

\begin{theorem} 
(\cite[Theorem 4.4.9]{BS}, see also \cite[Theorem 6.1]{LRS})

    The morphism 
    $$X(\fn) \longrightarrow C \setminus (I_\fn \cup {\bf Ram})$$
\noindent is proper.
\end{theorem}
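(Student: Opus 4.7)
The plan is to establish properness via the valuative criterion. Since Theorem~\ref{thm-Dell-sch} already tells us that $X(\fn)$ is a quasi-projective scheme (hence separated and of finite type) over $C \setminus (I_\fn \cup {\bf Ram})$, only the existence part of the valuative criterion needs verification. Concretely, fix a discrete valuation ring $R$ with fraction field $K$ and residue field $\kappa$, together with a morphism $\Spec R \to C \setminus (I_\fn \cup {\bf Ram})$ and a $\cD$-elliptic sheaf $(\ul{\cE}_K, \psi_K, \iota_K)$ with level $\fn$-structure over $\Spec K$ compatible with the chosen characteristic. I would construct a (necessarily unique) extension $(\ul{\cE}_R, \psi_R, \iota_R)$ over $\Spec R$.

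The key input is the adelic/lattice description of $\cD$-elliptic sheaves, which translates the extension problem into one about lattices in $D$-vector spaces over local fields. Following the strategy of \cite{LRS}, I would first extend each $\cE_{i,K}$ to a coherent $\cO_{C \times \Spec R}$-module $\cE_{i,R}$ with right $\cD$-action by choosing, at each place $v$ of $k$, a $\cD_v$-lattice invariant under the relevant Frobenius and meshing these choices with the global data. Away from $\infty$ and $\text{Im}(\psi_R)$, the lattice is essentially forced by the generic datum together with the level $\fn$-structure (which trivializes the fiber on $I_\fn \times \Spec R$), so the local extensions at places in $|C|\setminus\{\infty\}$ are unique. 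The morphisms $j_i$ and $t_i$ extend automatically by the flatness of $R$ over itself, provided their cokernels continue to be locally free of the prescribed ranks at the special fiber.

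The heart of the matter — and the step where the hypothesis that $D$ is a division algebra enters crucially — lies in controlling the extension at $\infty$, where the periodicity condition $\cE_{i+\ell} = \cE_i(\infty)$ must survive specialization. In the Drinfeld case one can lose properness through lattices degenerating toward a proper parabolic subgroup of $\GL_r$; here, because $D^*$ is anisotropic modulo center, no such degeneration exists, and every lattice chain in $D \otimes k_\infty$ with the required periodicity is bounded. This boundedness forces the sequence $(\cE_{i,K})_i$ to extend as a periodic chain over $R$, with $\coker j_{i,R}$ still locally free of rank $r$ on $\{\infty\}\times \Spec R$. A parallel argument at the characteristic handles $\coker t_{i,R}$: since the image of $\psi_R$ is contained in $C \setminus {\bf Ram}$ and specializes nicely, the cokernel of $t_{i,R}$ stays supported on the graph of $\psi_R$ and locally free of rank $r$.

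The main obstacle I anticipate is the technical bookkeeping at $\infty$: verifying that the extended sheaves satisfy all periodicity and rank conditions simultaneously, and that the extension is independent of auxiliary choices. Uniqueness then follows from the separatedness of $X(\fn)$, completing the valuative criterion. In practice, this is precisely the argument carried out in \cite[Theorem 6.1]{LRS}, and the Blum--Stuhler exposition in \cite[Theorem 4.4.9]{BS} refines it; I would therefore organize the proof by reducing to their statements, emphasizing that the anisotropy of $D^*/Z(D^*)$ is what eliminates the cuspidal boundary that obstructs properness in the Drinfeld setting.
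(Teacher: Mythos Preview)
The paper does not supply its own proof of this statement: it is quoted verbatim as a citation of \cite[Theorem 4.4.9]{BS} and \cite[Theorem 6.1]{LRS}, with no argument given in the text. So there is nothing in the paper to compare your proposal against.

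That said, your sketch is a faithful outline of the standard proof in those references: the valuative criterion, extension of the lattice chain at $\infty$, and the observation that anisotropy of $D^*$ modulo its center rules out the parabolic degenerations responsible for cusps in the Drinfeld case. Your closing sentence already concedes that you would ``organize the proof by reducing to their statements,'' which is exactly what the paper does. If you want to go beyond the paper, you would need to actually carry out the lattice-chain extension argument rather than gesture at it; the periodicity bookkeeping at $\infty$ you flag as the main obstacle is genuinely where the work lies, and your current proposal does not yet show how to do it.
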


\begin{remark}
We want to note that in \cite{LRS}, it is assumed that the characteristic of $\cD$-elliptic sheaf is away from $\infty$. However, both theorems above hold in general. We refer to \cite[Section 5 and Section 6]{Ul} for the proof in general case.
Moreover, the base change of $X(\fn)$ to $\bC_{\infty}$ is actually a projective scheme (see \cite[p.~218]{LRS}).
\end{remark}

We denote by $\wh{X}(\fn)$ the formal completion of the scheme $X(\fn)$ along the fiber over $\infty$, which is a formal scheme over $Spf(\cO_{\infty})$.

Let $\Omega_r$ denote the Drinfeld symmetric space:
$$\Omega_r:= \bP^{r-1}(\bC_{\infty})\setminus \cup (k_{\infty}\mbox{-rational hyperplanes}).$$
It has a rigid analytic stucture, and thus becomes a rigid space in the sense of Raynaud. By taking a formal completion over the fiber over $\infty$, we obtain a formal scheme $\widehat\Omega_r$ (\cite[Remark 4.3.1]{BS}). For more on the Drinfeld symmetric space, we refer to \cite{Dr1}, \cite{DH} and \cite{SS}.

Recall that
$\cD^{\infty} = \cO_D\otimes_A \cO^{\infty} = \prod_{v \neq \infty} \cO_D \otimes_A \cO_v$.
Set
$D^*(\bA_f):=\big(D\otimes_k \bA_f\big)^*$
and
\[
\cK(\fn)
:= 
\{a \in (\cD^{\infty})^* \mid a \equiv 1 \bmod \fn \}.
\]
Then the following theorem holds:

\begin{theorem} \label{thm-unif}
One has an isomorphism of formal schemes 
$$ \wh{X}(\fn) \simeq D^* \backslash \wh{\Omega}_r \times D^*(\bA_f)/ \cK(\fn).$$
\end{theorem}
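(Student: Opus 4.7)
The plan is to establish this result by adapting Drinfeld's uniformization theorem to the $\cD$-elliptic sheaf setting, in the spirit of Laumon--Rapoport--Stuhler. The strategy is to decompose a $\cD$-elliptic sheaf with level $\fn$-structure over the formal completion at $\infty$ into a ``local'' piece at $\infty$ and a ``global'' piece away from $\infty$, each of which admits an explicit moduli description, and then to glue the two.

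First, I would analyze the local structure at $\infty$. Over $\wh{X}(\fn)$ the characteristic $\psi$ factors through $\operatorname{Spf}(\cO_\infty)$, so the formal completions of the sheaves $\cE_i$ along $\{\infty\}\times S$ assemble into a well-defined formal $\cD_\infty$-module. Since $\cD_\infty \cong \bM_r(\cO_\infty)$, Morita equivalence converts this into a formal $\cO_\infty$-module, and the periodicity relation $\cE_{i+\ell}=\cE_i(\infty)$ together with the maps $t_i$ force it to be a \emph{special formal $\cO_\infty$-module} of height $r$ in the sense of Drinfeld. Drinfeld's theorem then identifies the moduli of such objects, equipped with a rigidification, precisely with $\wh{\Omega}_r$, on which $\GL_r(k_\infty) \cong (\cD_\infty \otimes_{\cO_\infty} k_\infty)^\ast$ acts naturally.

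Next, I would treat the global data away from $\infty$. The generic fiber of $\ul{\cE}$, as a right $D$-module, is free of rank one; once a rational trivialization is fixed, the collection of $\cD_v$-lattices at the finite places assembles into an element of $D^\ast(\bA_f)$, and the level $\fn$-structure pins this element down modulo $\cK(\fn)$. Changing the rational trivialization amounts to acting diagonally by $D^\ast$: on the local factor $\wh{\Omega}_r$ through the embedding $D^\ast \hookrightarrow D^\ast_\infty \cong \GL_r(k_\infty)$, and on the adelic factor by left multiplication. Passing to the $D^\ast$-quotient of $\wh{\Omega}_r \times D^\ast(\bA_f)/\cK(\fn)$ then yields the claimed isomorphism.

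The main obstacle lies in making this decomposition functorial in families. One needs an ``isogeny trivialization'' step: over any complete local ring mapping to $\wh{X}(\fn)$, the away-from-$\infty$ data must become constant after an \'etale base change so that the adelic coset is well-defined, while the formal-scheme structures on both sides must be verified to match as formal schemes over $\operatorname{Spf}(\cO_\infty)$, including compatibility with the Frobenius $\sigma$ appearing in Definition~\ref{def-D-ellsh}. This is precisely where Drinfeld's original rigid-analytic arguments must be recast into the language of formal $\cD_\infty$-modules via Morita equivalence, and it is the core content supplied by the detailed constructions cited from \cite{BS} and \cite{LRS}.
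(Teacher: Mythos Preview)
The paper does not supply its own argument here: its entire proof is a citation to \cite[Remark~4.4.11]{BS} and \cite[Remark~16.11]{Ul}. Your sketch is a faithful outline of precisely the argument carried out in those references---the decomposition into a special formal $\cO_\infty$-module at $\infty$ (handled via Morita equivalence and Drinfeld's description of $\wh{\Omega}_r$) together with adelic lattice data away from $\infty$, glued modulo the diagonal $D^*$-action---so your approach and the paper's (i.e.\ the cited sources') coincide.
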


\begin{proof}
\cite[Remark 4.4.11]{BS}, \cite[Remark 16.11]{Ul}
\end{proof}

Let $\cK^\infty$ be an open compact subgroup of $D^*(\bA_f)$.
We want to look at the quotient $D^* \backslash \wh{\Omega}_r \times D^*(\bA_f)/ \cK^\infty$ closer. 
Let $d_i \cK^\infty$, $i=1,...,s$ denote the representatives of the double coset space $D^* \backslash D^*(\bA_f) /\cK^\infty$ and let
$$\stab_{D^*}(d_i\cK^\infty) = \{d \in D^* \mid dd_i\cK^\infty=d_i\cK^\infty \}$$
\noindent be the stabilizer of $d_i\cK^\infty$ with respect to $D^*$-action.
In other words, 
$$d \in \stab_{D^*}(d_i\cK^\infty) \iff dd_i\cK^\infty=d_i\cK^\infty \iff d \in d_i \cK^\infty d_i^{-1}$$
\noindent so $d \in d_i \cK^\infty d_i^{-1} \cap D^*$. Put $\Gamma_i := d_i \cK^\infty d_i^{-1} \cap D^*$, which is a discrete subgroup of $D_\infty^* \cong \GL_r(k_\infty)$.

\begin{proposition}\label{prop: dc}
Keep notations as above.
We have

    $$D^* \backslash \wh{\Omega}_r \times D^*(\bA_f)/ \cK^\infty \simeq \coprod_{i=1}^{s} \Gamma_i \backslash \widehat \Omega_r.$$
\end{proposition}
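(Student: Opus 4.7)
The plan is to use the given double coset decomposition $D^*(\bA_f) = \coprod_{i=1}^{s} D^* d_i \cK^\infty$ to decompose the left-hand side into $s$ pieces, and then to identify each piece with $\Gamma_i\backslash\wh\Omega_r$ via an explicit map. First I would write
\[
\wh\Omega_r \times D^*(\bA_f)/\cK^\infty \;=\; \coprod_{i=1}^{s} \bigl(\wh\Omega_r \times D^* d_i \cK^\infty/\cK^\infty\bigr),
\]
noting that the factorisation is preserved by the residual $D^*$-action on the left, so the left-hand side of the proposition decomposes as a disjoint union of $D^*\backslash(\wh\Omega_r\times D^*d_i\cK^\infty/\cK^\infty)$ over $i=1,\dots,s$.

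Next, for a fixed $i$, I would consider the map
\[
\varphi_i\colon \wh\Omega_r \longrightarrow D^*\backslash\bigl(\wh\Omega_r \times D^* d_i \cK^\infty/\cK^\infty\bigr), \qquad z \longmapsto [(z, d_i\cK^\infty)].
\]
The right-translation action of $\cK^\infty$ on $D^*d_i\cK^\infty/\cK^\infty$ is trivial after quotienting, so I need to unpack when $(z,d_i\cK^\infty)$ and $(z',d_i\cK^\infty)$ lie in the same $D^*$-orbit: this happens precisely when there exists $d\in D^*$ with $d\cdot z=z'$ and $d\cdot d_i\cK^\infty=d_i\cK^\infty$, i.e.\ $d\in d_i\cK^\infty d_i^{-1}\cap D^*=\Gamma_i$. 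Hence $\varphi_i$ factors through $\Gamma_i\backslash\wh\Omega_r$ and induces an injection from that quotient. Surjectivity follows because any element of $\wh\Omega_r\times D^*d_i\cK^\infty/\cK^\infty$ can be written as $(z,\gamma d_i\cK^\infty)$ with $\gamma\in D^*$ (using $D^*d_i\cK^\infty/\cK^\infty = D^*/\Gamma_i$), and translating by $\gamma^{-1}$ brings it to the form $(\gamma^{-1}z,d_i\cK^\infty)$, which is in the image of $\varphi_i$.

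Assembling the bijections $\varphi_i$ for $i=1,\dots,s$ yields the desired identification at the level of sets. To upgrade this to an isomorphism of formal schemes, I would observe that $\Gamma_i$, being a discrete subgroup of $\GL_r(k_\infty)$ commensurable with an arithmetic group, acts properly discontinuously on $\wh\Omega_r$, so the quotient $\Gamma_i\backslash\wh\Omega_r$ exists as a formal scheme; the map $\varphi_i$ is induced by the open immersion $\wh\Omega_r\hookrightarrow \wh\Omega_r\times D^*(\bA_f)$ at the component $d_i$, which is clearly a morphism of formal schemes and equivariant for the relevant group actions. The main (minor) obstacle is this last point: checking that the set-theoretic isomorphism respects the formal-scheme structure, which amounts to verifying that the disjoint components in the double coset decomposition are open and closed, and that the quotient formalism on both sides matches — both of which follow from the discreteness of $D^*$ in $D^*(\bA_f)/\cK^\infty$ and the standard construction of quotients of $\wh\Omega_r$ by congruence subgroups of $D^*$.
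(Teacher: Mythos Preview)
Your proposal is correct and follows essentially the same approach as the paper: decompose $D^*(\bA_f)$ via the double coset representatives $d_i$, then identify each piece $D^*\backslash(\wh\Omega_r\times D^*d_i\cK^\infty/\cK^\infty)$ with $\Gamma_i\backslash\wh\Omega_r$ using the stabilizer computation $\Gamma_i=d_i\cK^\infty d_i^{-1}\cap D^*$. The paper's proof is much terser (it simply writes the decomposition and asserts the identification), whereas you spell out the bijection $\varphi_i$ and the formal-scheme considerations explicitly, but the underlying argument is the same.
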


\begin{proof}

Write $D^*(\bA_f)$ as the finite disjoint union $\coprod_{i=1}^s D^*d_i\cK^\infty$. Then

$$\widehat \Omega_r \times D^*(\bA_f) = \coprod_{i=1}^s \widehat \Omega_r \times D^*d_i\cK^\infty.$$
Hence we can identify $D^* \backslash \wh{\Omega}_r \times D^*(\bA_f)/ \cK^\infty$ with $\coprod_{i=1}^d \Gamma_i \backslash \widehat \Omega_r$.

\end{proof}

By construction, the generic fibre over $k_{\infty}$ of $\widehat \Omega_r$  is the rigid space $\Omega_r$. Therefore, together with the Proposition \ref{prop: dc}, we have the following proposition:

\begin{proposition} \label{prop: rigid}
Let $\fn$ be a nonzero ideal of $A$ and take $\cK^\infty = \cK(\fn)$.
We have    $$X(\fn)(\bC_\infty)\simeq \widehat X(\fn)^{an} \simeq \coprod_i \Gamma_i \backslash \Omega_r \simeq D^*\backslash \Omega_r\times D^*(\bA_f)/\cK(\fn).$$
\end{proposition}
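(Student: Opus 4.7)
The plan is to chain together three comparisons. First I would establish the left-most isomorphism $X(\fn)(\bC_\infty) \simeq \widehat X(\fn)^{an}$ using the fact that $X(\fn) \to C \setminus (I_\fn \cup \mathbf{Ram})$ is proper (and in particular its base change to $\bC_\infty$ is projective, as noted in the remark after the properness theorem). The rigid analytic analogue of GAGA, combined with Raynaud's comparison between proper formal schemes and their rigid generic fibres, then identifies the rigid analytification of $X(\fn)_{\bC_\infty}$ (whose underlying point set is $X(\fn)(\bC_\infty)$) canonically with the rigid generic fibre $\widehat X(\fn)^{an}$ of the formal completion along the fibre over $\infty$.

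Next I would transport Theorem~\ref{thm-unif} from the formal to the rigid setting. Applying Proposition~\ref{prop: dc} with $\cK^\infty = \cK(\fn)$ on the formal side already decomposes $D^* \backslash \widehat \Omega_r \times D^*(\bA_f)/\cK(\fn)$ as the finite disjoint union $\coprod_{i=1}^s \Gamma_i \backslash \widehat\Omega_r$. Passing to rigid generic fibres commutes with finite disjoint unions and with the quotient by each discrete subgroup $\Gamma_i \subset \GL_r(k_\infty)$ acting on $\widehat\Omega_r$; since the generic fibre of $\widehat\Omega_r$ is the Drinfeld symmetric space $\Omega_r$, this yields the middle isomorphism $\widehat X(\fn)^{an} \simeq \coprod_i \Gamma_i \backslash \Omega_r$. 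Finally, running the proof of Proposition~\ref{prop: dc} in reverse on the rigid side, using the same double coset representatives $d_i$, is an entirely set-theoretic manipulation that repackages $\coprod_i \Gamma_i \backslash \Omega_r$ as $D^* \backslash \Omega_r \times D^*(\bA_f)/\cK(\fn)$, closing the chain.

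The main obstacle is the first step: one must invoke the appropriate rigid GAGA/Raynaud comparison for proper schemes to ensure that the rigid analytification of the generic fibre coincides canonically with the rigid generic fibre of the formal completion along the special fibre at $\infty$. This requires properness of $X(\fn)$ over a neighborhood of $\infty$, which is exactly what the preceding theorem supplies. Once this comparison is granted, every subsequent step is purely formal, relying only on the compatibility of the rigid generic fibre functor with finite disjoint unions and with quotients by discrete subgroups of $\GL_r(k_\infty)$ acting on $\widehat\Omega_r$, together with the fact that the finiteness of $D^*\backslash D^*(\bA_f)/\cK(\fn)$ makes the decomposition in Proposition~\ref{prop: dc} a finite one.
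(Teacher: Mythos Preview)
Your proposal is correct and follows essentially the same route as the paper: invoke Theorem~\ref{thm-unif}, apply the double-coset decomposition of Proposition~\ref{prop: dc} with $\cK^\infty=\cK(\fn)$, and pass from the formal to the rigid side using that the generic fibre of $\widehat\Omega_r$ is $\Omega_r$. The paper treats the first isomorphism $X(\fn)(\bC_\infty)\simeq \widehat X(\fn)^{an}$ as immediate from properness, whereas you spell out the rigid GAGA/Raynaud comparison explicitly; this is an elaboration rather than a different argument.
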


\subsection{Hecke correspondences and associated cycles}\label{sec: Hecke}
\label{sec-cycles}

In this section we will give a brief summary of Hecke correspondences on the modular variety of $\cD$-elliptic sheaves. For details we refer to \cite[Section 7]{LRS}. 

Let $I$ be a finite closed subscheme of $C$ such that $I \cap (\{\infty \} \cup {\bf Ram}) = \emptyset$. Define

$$\Ell_{C, \cD}^{\infty}=\varprojlim_{I} \Ell_{C, \cD, I}.$$

For an $\bF_q$-scheme $S$, a section of $\Ell_{C, \cD}^{\infty}$ over $S$ consists of triples $(\cE_i, j_i, t_i)$ as in the Definition \ref{def-D-ellsh} with the following `level structure', i.e, a $\cD$-linear isomorphism
$$(\cD \otimes_{\cO_X} \cO^{\infty}) \boxtimes \cO_S \xrightarrow{\sim} \cE_i \otimes_{\cO_{C \times S}} (\cO^{\infty} \boxtimes \cO_S)$$
\noindent where $\cO^{\infty}=\prod_{v \neq \infty} \cO_v$. 

There is a right action of $D^*(\bA_f) \cap \cD^{\infty}$ on $\Ell_{C, \cD}^{\infty}$, extending the action of $(\cD^{\infty})^*$ (cf. \cite[(7.4)]{LRS}). 
Let $\cK^{\infty} \subseteq D^*(\bA_f)$ be an open compact subgroup and fix $g \in D^*(\bA_f)$. Then, we get a correspondence over $\Spec k$:
\begin{equation}\label{eqn: HC}	\xymatrix{ 
		& \Ell_{C, \cD}^{\infty} / \cK^{\infty} \cap g^{-1}\cK^{\infty}g \ar@{->}[dr]^{\pi_{g, 2}} \ar@{->}[dl]_{\pi_{g, 1}} & \\
		\Ell_{C, \cD}^{\infty} / \cK^{\infty}  & & \Ell_{C, \cD}^{\infty} / \cK^{\infty}
}
\end{equation}
\noindent where the morphisms $\pi_{g, 1}$ and $\pi_{g, 2}$ are induced, respectively, by the inclusions 
\[
\cK^{\infty} \cap g^{-1} \cK^{\infty} g \subseteq \cK^{\infty}
\quad \text{ and }\quad 
\cK^{\infty} \cap g^{-1}\cK^{\infty} g \xrightarrow{Ad(g)} \cK^{\infty}.
\]

Given a nonzero ideal $\fn$ of $A$ with $v \nmid \fn$ for every $v \in {\bf Ram}$, recall that we let
$$\cK(\fn)=\{a \in (\cD^{\infty})^* \mid a \equiv 1 \mbox{ mod } \fn \}.$$

\noindent For every $g \in D^*(\bA_f)$, define $$\cK(\fn, g)=\cK(\fn)\cap g^{-1} \cK(\fn)g.$$
We also put 
\[
\cK:=\cK(1) (=(\cD^{\infty})^*)
\quad \text{ and } \quad 
\cK_g := \cK(1,g) = \cK \cap g^{-1}\cK g.
\]

Let $X(\fn,g)$ be the scheme corresponding to $\bZ \backslash \Ell_{C,\cD}^\infty/\cK(\fn,g)$.
Then the correspondence (\ref{eqn: HC}) gives us the following:
\[
\centerline{
	\xymatrix{ 
		& X(\fn,g) \ar@{->}[dr]^{\pi_{\fn,g, 2}} \ar@{->}[dl]_{\pi_{\fn, g, 1}} & \\
		X(\fn)  & & X(\fn)
}}
\]
In particular, similar to Proposition~\ref{prop: rigid}, we may identify
\[
X(\fn,g)(\bC_\infty) \cong D^*\backslash \Omega_r \times D^*(\bA_f)/\cK(\fn,g).
\]
Then the two morphisms $\pi_{\fn,g,1},\pi_{\fn,g,2}:X(\fn,g)\rightarrow X(\fn)$
can be realized as follows:
for every $z \in \Omega_r$ and $b \in D^*(\bA_f)$, let $[z,b]_{\fn,g}$
and $[z,b]_{\fn}$be the representing double coset in $D^*\backslash \Omega_r \times D^*(\bA_f)/\cK(\fn,g)$
and in 
$D^*\backslash \Omega_r \times D^*(\bA_f)/\cK(\fn)$,
respectively.
Then 
\[
\pi_{\fn,g,1}([z,b]_{\fn,g}) = [z,b]_{\fn}
\quad \text{ and } \quad 
\pi_{\fn,g,2}([z,b]_{\fn,g}) = [z,bg^{-1}]_{\fn}.
\]
Therefore we have a morphism
$$\pi_{\fn, g}: X(\fn, g) \longrightarrow X(\fn) \times X(\fn)$$
\noindent which is realized by 
\begin{equation}\label{eqn: pi-n-g}
[z, b]_{\fn, g} \mapsto \Big(\pi_{\fn, g, 1}([z, b]_{\fn}), \pi_{\fn, g, 2}([z, b]_{\fn})\Big) = \big([z,b]_{\fn},[z,bg^{-1}]_{\fn}\big)
\end{equation}
\noindent for every $z \in \Omega_r, b \in D^*(\bA_f)$.

By abuse of notations, we still denote by $X(\fn,g)$ its base change to $\bC_{\infty}$.
Recall that it is known that $X(\fn,g)$ has constant dimension $r-1$ over $\bC_{\infty}$ (cf. Theorem \ref{thm-Dell-sch}).
Let $\cX(\fn,g)$ be the $(r-1)$-cycle of $X(\fn,g)$ associated with itself.
We define 
\begin{equation}\label{eqn: Zng}
\cZ(\fn, g) = \pi_{\fn, g, *}(\cX(\fn, g))
\end{equation}
which is an $(r-1)$-cycle of $X(\fn)\times X(\fn)$.
In particular, we denote
\[
\cZ(\fn) =  \cZ(\fn, 1),
\quad 
\cZ_g = \cZ(1, g),
\quad 
\text{and}
\quad 
\cZ=\cZ_1.
\]

Moreover, let $\fa$ be a nonzero ideal of $A$. 
We define the following associated cycle on $X(1)\times X(1)$:
\begin{equation}\label{eqn: Za}
\cZ_{\fa} = \sum_{[g]} \cZ_g
\end{equation}
\noindent where the sum is taken over double cosets $[g] \in \cK\backslash (\cD^{\infty} \cap D^*(\bA_f))/ \cK$ with the condition $\hbox{Nr}(g) \cO^{\infty} \cap k = \fa$. 
Here $\hbox{Nr}$ denotes the natural extension of the reduced norm of $D$ to $D^*(\bA_f)$.  

The main goal of this paper is to study the intersection cycle $\cZ\cdot\cZ_{\fa}$,  
and to connect the intersection number $i(\cZ\cdot\cZ_{\fa})$ with the class numbers of ``imaginary orders''.

\section{Briney's Theory} \label{sec: Briney}

In this section we briefly recall Briney's theory of intersections of quotients of algebraic varieties in \cite{B}. Here we assume all varieties are (absolutely) irreducible and abstract as in \cite{B}. 

\subsection{Quotient varieties by finite groups}

Fix an algebraically closed field $L$. We will focus on quotients of an algebraic variety $X$.  Let $\fg$ be a finite subgroup of the group of all automorphisms of $X$. Then one can look at the quotient $X/\fg$ of $X$ by $\fg$. We assume that $X/\fg$ is an algebraic variety, i.e, every $\fg$-orbit is contained in an affine open set in $X$ (\cite[Section 1.4]{Serre}). Then the natural projection map $\lambda: X \longrightarrow X/\fg$ is a proper morphism. Moreover it is a covering map. 
If all $\sigma \in \fg$ are defined over $L$ and $L$ is a field of definition of $X$, we say that $(X, \fg)$ is defined over $L$.

Let $(X, \fg)$ and $(X', \fg')$ be as in the previous paragraph. We say they are equivalent if there are surjective isomorphisms $\alpha: X \longrightarrow X'$ and $\beta: \fg \longrightarrow \fg'$ such that $\alpha(x^{\sigma})=\alpha(x)^{\beta(\sigma)}$ for all $x \in X$ and for all $\sigma \in \fg$. In this case, $\alpha$ gives us isomorphism of quotients $\alpha': X/\fg \longrightarrow X'/\fg'$ such that $\lambda'\circ \alpha= \alpha' \circ \lambda$.

Now, let $X, X'$ and $Y$ be varieties such that there are morphisms $\theta: X \longrightarrow Y$ and $\theta': X' \longrightarrow Y$. Let $\fg$ and $\fg'$ be finite subgroups of the automorphism groups of $X$ and $X'$, respectively. Then, each morphism splits through the associated quotient:

\centerline{
	\xymatrix{ 
		X \ar@{->}[dr]^{\lambda} \ar@{->}[rr]^{\theta} & & Y  & & X' \ar@{->}[dr]^{\lambda'}  \ar@{->}[rr]^{\theta'}& & Y \\
		  & X/\fg \ar@{->}[ur]_{\tau} & & & & X'/\fg' \ar@{->}[ur]_{\tau'}
}}

We say $(X, \fg, \theta)$ and $(X', \fg', \theta')$ are equivalent if $(X, \fg)$ and $(X', \fg')$ are equivalent and if $\tau = \tau' \circ \alpha' $. In other words every triangle in the following diagram commutes:

\centerline{
	\xymatrix{ 
		X \ar@{->}[dr]^{\lambda} \ar@{->}[rr]^{\theta} \ar@/^2pc/[rrrr]^{\alpha} & & Y  & & X' \ar@{->}[dl]^{\lambda'}  \ar@{->}[ll]_{\theta'}& &  \\
		  & X/\fg \ar@{->}[ur]_{\tau} \ar@{->}[rr]_{\alpha'} &  & X'/\fg' \ar@{->}[ul]_{\tau'}
}}

Now, we can define a \emph{quotient structure}. 

\begin{definition}\label{defn-quotstr}
A quotient structure on the variety $Y$ is the equivalence classes of the `real' quotients. More precisely, a quotient structure is the equivalence classes of the triples $(X, \fg, \theta)$ such that $\tau: X/ \fg \longrightarrow Y $  is an isomorphism (\cite[Definition 2.1]{B}). 
In this case we say $Y$ is a \emph{quotient variety of $X$ by $\fg$} and is denoted by $Y \longleftarrow [X, \fg, \theta]$. 
    
\end{definition}

Let $X$ be an algebraic variety and $Y \longleftarrow [X, \fg, \theta]$ be a quotient variety of $X$ by $\fg$ as above.
For a subvariety $A \subseteq Y$, a typical component of $\theta^{-1}(A)$ will be denoted by $A'$. In this case we say $A'$ \emph{lies over} $A$. For each $A' \subseteq X$, we define the following two groups:

\begin{enumerate}
    \item \emph{The splitting group} of $A'$ is defined as $$\fg^s(A')=\{ \sigma \in \fg  \mid (A')^{\sigma}=A' \}.$$

    \item \emph{The inertia group} of $A'$ is the group 
    $$\fg^i(A')=\{ \sigma \in \fg \mid x^{\sigma}=x \hbox{ for all } x \in A' \}.$$
    
\end{enumerate}

For any subvariety $A$ of the quotient $Y$ we associate certain numerical characters where $A'$ denotes a typical component of $\theta^{-1}(A)$:

\begin{enumerate}
    \item \emph{The degree of $A'$ over $A$} is defined as $$d(A)=[A' : A].$$

    \item The degree
    $$d_s(A)=[A': A]_s$$
    \noindent is called \emph{separable degree of $A'$ over $A$}. 

    \item We define \emph{the inseparable degree of $A'$ over $A$} as $$d_i(A)=[A' : A]_i.$$

    \item Define $$\ell(A)= [\fg^i(A') : 1] / d_i(A),$$
     which is analogous to the ramification index for a valuation ring. 
\end{enumerate}

\begin{remark}\label{rem: numeq}
Let $n(A)$ denote the number of components of $\theta^{-1}(A)$ and $g:=[\fg:1]$. We have the relation
$$n(A)d(A)\ell(A)=g=n(A)d_s(A)[\fg^i(A'):1]$$
\noindent for any $A \subseteq Y$ and any $A' \subseteq X$ lying over $A$.

Note that the components of $\theta^{-1}(A)$ are permuted transitively by the elements of $\fg$ (\cite[Ch. V, Proposition 3 on p. 189]{Chev}), so that the above characters depend only on $A$ and not on the choice of $A'$.
\end{remark}

\subsection{Intersection multiplicity and Projection formula}

We recall the following theorem:

\begin{theorem}(\cite[Appendix A, Theorem 1.1]{H})
    There is a unique intersection theory for cycles modulo rational equivalence on the varieties which satisfies conditions A1-A7 in \cite[Appendix A]{H}.
\end{theorem}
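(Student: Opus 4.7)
The plan is to establish that the seven axioms A1--A7 force any intersection theory to coincide with the one constructed via Chow's moving lemma and local intersection multiplicities, thereby proving uniqueness. I would not prove existence from scratch here; uniqueness is the serious content of the statement, since existence was already known by classical constructions (Severi, Weil, Chevalley, Samuel, Serre).

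First I would use axiom A4 to pin down the intersection product on the ``easy'' case of cycles meeting properly. Given subvarieties $Y, Z \subseteq X$ on a smooth variety with $\mathrm{codim}(Y \cap Z) = \mathrm{codim}(Y) + \mathrm{codim}(Z)$, axiom A4 forces
\[
Y \cdot Z \;=\; \sum_{W} i(Y, Z; W)\, [W],
\]
where $W$ runs over the components of $Y \cap Z$ and $i(Y, Z; W)$ is the local intersection multiplicity (computable, for example, by Serre's Tor formula, or inductively via reduction to the diagonal, exploiting the smoothness of $X$). Thus any two intersection theories agree on properly intersecting cycles; moreover, biadditivity from A1 extends this agreement to arbitrary formal sums of properly intersecting cycles.

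Second, I would invoke Chow's moving lemma: given any pair of cycles $\alpha, \beta$ on a smooth quasi-projective variety $X$, the class $\alpha$ has a representative $\alpha'$ rationally equivalent to $\alpha$ that meets $\beta$ properly. Combined with axiom A5 (invariance under rational equivalence), this shows that the general product $\alpha \cdot \beta$ is determined by $\alpha' \cdot \beta$, which in turn is determined by A4. Hence any two theories satisfying A1--A7 must agree on all rational equivalence classes. The remaining axioms A2 (commutativity), A3 (associativity), A6 (projection formula), and A7 (pullback compatibility / reduction to the diagonal) are then consistency conditions that are automatically respected by the construction, and they in turn guarantee that the extension to non-properly intersecting cycles via the moving lemma is well-defined and independent of the chosen representative.

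The main obstacle is Chow's moving lemma itself, particularly for cycles of intermediate codimension on a smooth quasi-projective variety that is not necessarily projective; one reduces to the projective case via a compactification, then uses projections from a suitable linear subspace of a high-dimensional projective embedding to deform $\alpha$ into general position with respect to $\beta$, and finally shows the resulting cycle differs from $\alpha$ by a rational equivalence. A secondary subtlety is verifying that the local multiplicity $i(Y, Z; W)$ prescribed by A4 is unambiguous: one must confirm that the Tor-formula, the length of the local ring of the scheme-theoretic intersection, and the reduction-to-the-diagonal definition all coincide when $X$ is smooth (this uses the regularity of the diagonal embedding $X \hookrightarrow X \times X$). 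Once these two technical points are in hand, uniqueness follows purely formally from the axiomatic framework.
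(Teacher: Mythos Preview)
The paper does not prove this theorem; it is quoted verbatim as a background result from \cite[Appendix~A, Theorem~1.1]{H} and no argument is given. Your outline is the standard proof one finds in the cited reference: pin down the product on properly intersecting cycles via the local multiplicity prescribed by A4, then extend to arbitrary classes by Chow's moving lemma together with A5, with the remaining axioms ensuring consistency. This is correct as a sketch and matches Hartshorne's own treatment, so there is nothing to compare against in the present paper.
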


By this theorem we can talk about the intersection of smooth (quasi-) projective varieties. Briney's theory enables us to extend the intersection theory of finite quotients of smooth (quasi-) projective varieties to not necessarily smooth ones by allowing rational multiplicities. First we define covering cycles. 

Let $X, Y$ be (quasi-)projective (not necessarily smooth) varieties over $L$. And, let $\theta: X \longrightarrow Y$ be the projection map as defined in the Definition \ref{defn-quotstr}. For any subvariety $A \subset Y$, let $A'_1, \cdots, A'_n$ be distinct irreducible components of $\theta^{-1}(A)$. Set

$$A^* = \sum_{j=1}^n A'_j.$$

We call $A^*$ \emph{the covering cycle of $A$}. 

By the projection map $\theta: X \longrightarrow Y$, we get the following mappings on subvarieties

$$\theta_z(A'):= d(A)A$$
\noindent and
$$\theta^z(A):= \ell(A) A^*.$$
One can extend the mappings $\theta_z$ and $\theta^z$ linearly to the cycles. 

\begin{remark}
    For every cycle $Z$ on $Y$ we have    $\theta_z(\theta^z(Z))=g Z$, where $g= [\fg :1]$. This relation is a special case of \emph{projection formula} (cf. Proposition \ref{prop: projform}).
\end{remark}

Let $A, B \subseteq Y$ and $A^*, B^*$ be the corresponding covering cycles. Let $P$ be a proper component of $A \cap B$ on $Y$. 

We can compute the intersection multiplicity $(P; A\cdot B)_Y$ by carrying over the multiplicities given on $X$ via the quotient map $\theta: X \longrightarrow Y$.

\begin{definition}(\cite[Definition 3.2]{B}) \label{def : intmult}

 The \emph{intersection multiplicity} of $A$ and $B$ at $P$ on $Y$ is

 $$(P; A\cdot B)_Y = \frac{\ell(A) \ell(B)}{\ell(P)} (P'; A^*\cdot B^*)_X$$

 \noindent where $P'$ is any component of $P^*$ and the multiplicity $(P'; A^* \cdot B^*)_X$ is on $X$. 
    
\end{definition}

\begin{lemma}(\cite[Lemma 2.6]{B})\label{lem: numbofcompof-cycle}

If $A'\cdot B^*$ is defined on $X$, so is $A'^{\sigma}\cdot B^*$ for all $\sigma \in \fg$, and $\theta_z'(A'^{\sigma}\cdot B^*)=\theta_z(A'\cdot B^*)$. As a result, 
$$\theta_z(A^*\cdot B^*)=n(A)\theta_z(A'\cdot B^*).$$
    
\end{lemma}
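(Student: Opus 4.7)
The plan is to exploit two standard consequences of the quotient structure $Y=X/\fg$: first, the covering map satisfies $\theta\circ\sigma=\theta$ for every $\sigma\in\fg$; second, the covering cycle $B^*$, being the sum of all components of $\theta^{-1}(B)$, is $\fg$-stable as a cycle. Both identities then propagate to cycles via pushforward, and the whole lemma reduces to a linearity bookkeeping.

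First I would verify that $A'^{\sigma}\cdot B^*$ is defined on $X$. Since $\sigma$ is an automorphism of $X$, properness of intersection is preserved component-by-component: each $A'^{\sigma}\cap B'_j$ equals $\sigma(A'\cap\sigma^{-1}(B'_j))$ and $\sigma^{-1}(B'_j)$ is again a component of $B^*$ by $\fg$-invariance. Hence on the level of cycles
\[
A'^{\sigma}\cdot B^* \;=\; A'^{\sigma}\cdot (B^*)^{\sigma} \;=\; \sigma_*\bigl(A'\cdot B^*\bigr),
\]
where the first equality uses $(B^*)^{\sigma}=B^*$ and the second is the standard equivariance of the intersection product under the isomorphism $\sigma$.

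Next I would apply $\theta_z$. From the defining formula $\theta_z(Z')=d(\theta(Z'))\cdot \theta(Z')$ on an irreducible subvariety $Z'\subseteq X$ and the obvious identity $\theta(Z'^{\sigma})=\theta(Z')$, one deduces $\theta_z\circ\sigma_*=\theta_z$ on arbitrary cycles of $X$. Combining with the previous step,
\[
\theta_z(A'^{\sigma}\cdot B^*)\;=\;\theta_z\bigl(\sigma_*(A'\cdot B^*)\bigr)\;=\;\theta_z(A'\cdot B^*),
\]
which is the first assertion. For the ``as a result'' clause I would use the transitivity of the $\fg$-action on the components $A'_1,\dots,A'_{n(A)}$ of $\theta^{-1}(A)$ recorded in Remark~\ref{rem: numeq}: fixing $A'=A'_1$ and choosing $\sigma_j\in\fg$ with $(A'_1)^{\sigma_j}=A'_j$, the linearity of $\theta_z$ together with the first part yields
\[
\theta_z(A^*\cdot B^*)\;=\;\sum_{j=1}^{n(A)}\theta_z\bigl((A'_1)^{\sigma_j}\cdot B^*\bigr)\;=\;n(A)\cdot\theta_z(A'\cdot B^*).
\]

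The only real obstacle is a clean formulation of what ``$A'\cdot B^*$ is defined'' means in Briney's sense; once this is phrased as properness of each pairwise intersection $A'\cap B'_j$, the fact that $\sigma$ permutes the $B'_j$ transfers the property immediately to $A'^{\sigma}\cdot B^*$, and the rest is formal from $\theta\circ\sigma=\theta$.
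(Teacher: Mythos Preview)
The paper does not actually prove this lemma; it is quoted verbatim from Briney \cite[Lemma~2.6]{B} and used as a black box. Your argument is correct and is exactly the natural one: $(B^*)^{\sigma}=B^*$ because $\fg$ permutes the components of $\theta^{-1}(B)$, intersection products are carried by the automorphism $\sigma$, and $\theta_z\circ\sigma_*=\theta_z$ follows from $\theta\circ\sigma=\theta$; the final identity is then linearity plus transitivity of the $\fg$-action on the components of $\theta^{-1}(A)$.
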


\begin{proposition}(\cite[Proposition 3.3]{B})\label{prop: projform}

Let $Z, T$ be cycles on $Y$ and $Z'$ be cycle on $X$.  

\begin{enumerate}
    
    \item If $Z\cdot T$ is defined on $Y$ then $\theta^z(Z)\cdot\theta^z(T)$ is defined on $X$. And in this case, we have
    $$\theta^z(Z\cdot T) = \theta^z(Z) \cdot \theta^z(T).$$

    \item(\emph{Projection formula}) $Z' \cdot \theta^z(T)$ is defined on $X$ iff $\theta_z(Z')\cdot T$ is defined on $Y$. And in this case, we have
    $$\theta_z(Z'\cdot \theta^z(T))=\theta_z(Z')\cdot T.$$

\end{enumerate}

\end{proposition}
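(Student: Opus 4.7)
The plan is to prove both parts by reducing to the case of prime cycles (i.e., subvarieties) via linearity and then comparing intersection multiplicities component by component on both sides. The essential inputs will be Definition~\ref{def : intmult} (which translates multiplicities on $Y$ to multiplicities on $X$), Lemma~\ref{lem: numbofcompof-cycle} (the $\fg$-equivariance of intersections with covering cycles), the identity $n(A)d(A)\ell(A)=g$ from Remark~\ref{rem: numeq}, and the contraction identity $\theta_z\circ\theta^z=g\cdot\mathrm{id}$ noted after the definition of $\theta^z$.

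For Part~(1), I would take $A,B\subseteq Y$ irreducible with $A\cdot B$ defined, so each proper component $P$ of $A\cap B$ has codimension $\operatorname{codim}A+\operatorname{codim}B$ in $Y$. Since $\theta$ is a finite covering, it preserves dimensions, so $A^{*}\cdot B^{*}$ is automatically defined on $X$ and its proper components are exactly the components of $\theta^{-1}(P)$ as $P$ ranges over proper components of $A\cap B$. Rearranging Definition~\ref{def : intmult} gives
\[
(P;A\cdot B)_{Y}\,\ell(P) \;=\; \ell(A)\ell(B)\,(P';A^{*}\cdot B^{*})_{X}
\]
for any $P'$ lying over $P$, and Lemma~\ref{lem: numbofcompof-cycle} ensures that all components of $P^{*}$ carry equal multiplicity in $A^{*}\cdot B^{*}$. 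Multiplying by $P^{*}$ and summing over $P$ then yields $\theta^{z}(A\cdot B)=\ell(A)\ell(B)(A^{*}\cdot B^{*})=\theta^{z}(A)\cdot\theta^{z}(B)$.

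For Part~(2), I would reduce to the case where $Z'=A'$ is an irreducible component of $\theta^{-1}(A)$ with $A=\theta(A')$, and $T=B$ is an irreducible subvariety of $Y$. The ``defined iff defined'' equivalence will follow from the dimension preservation of $\theta$ together with the transitivity of $\fg$ on the components of $\theta^{-1}(A)$, which shows that every proper component of $A\cap B$ is the $\theta$-image of a proper component of some $A'^{\sigma}\cap B^{*}$, and hence of $A'\cap B^{*}$ after conjugation. To obtain the formula, I would apply $\theta_{z}$ to the identity in Part~(1) and use $\theta_{z}\theta^{z}=g\cdot\mathrm{id}$ together with Lemma~\ref{lem: numbofcompof-cycle} to get
\[
g(A\cdot B) \;=\; \ell(A)\ell(B)\,\theta_{z}(A^{*}\cdot B^{*}) \;=\; \ell(A)\ell(B)\,n(A)\,\theta_{z}(A'\cdot B^{*}).
\]
Dividing by $n(A)\ell(A)$ and substituting $n(A)d(A)\ell(A)=g$ gives $\theta_{z}(A'\cdot B^{*}) = (d(A)/\ell(B))\,(A\cdot B)$; multiplying by $\ell(B)$ and using $\theta^{z}(B)=\ell(B)B^{*}$ and $\theta_{z}(A')=d(A)A$ produces $\theta_{z}(A'\cdot\theta^{z}(B))=\theta_{z}(A')\cdot B$.

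The main obstacle will be the bookkeeping with the rational factors $\ell(A),\ell(B),\ell(P),d(A),n(A)$ so that they collapse correctly into an integer-coefficient identity; the numerical relation $n(A)d(A)\ell(A)=g$ combined with the $\fg$-equivariance of Lemma~\ref{lem: numbofcompof-cycle} is the precise algebraic input that makes this work. A subsidiary care point is the reverse ``defined'' implication in Part~(2), which relies on the fact that the action of $\fg$ permutes the components of $\theta^{-1}(A)$ transitively, so that any proper component of $A\cap B$ lifts (after applying an appropriate $\sigma\in\fg$) to a proper component of $A'\cap B^{*}$.
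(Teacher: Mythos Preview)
The paper does not give its own proof of this proposition: it is stated with the citation \cite[Proposition 3.3]{B} and used as a black box. Your proposal is a correct reconstruction of the argument from the ingredients the paper records in Section~\ref{sec: Briney} (Definition~\ref{def : intmult}, Lemma~\ref{lem: numbofcompof-cycle}, Remark~\ref{rem: numeq}, and $\theta_z\theta^z=g\cdot\mathrm{id}$), and it is indeed the route Briney takes, so there is nothing to compare.
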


\begin{remark}\label{rem: self-int}
Suppose that $X$ is projective and smooth over $\bC_{\infty}$.
Given two cycles $Z_1',Z_2'$ on $X$ with middle dimension which have transversal intersection,
the intersection number 
$i(Z_1'\cdot Z_2')$ is the counting number of their intersection points.
By the moving lemma, this intersection number can be extended to every two cycles $Z_1',Z_2'$ on $X$ with middle dimension.
Therefore for every two cycle $T_1,T_2$ on $Y$ with middle dimension,
take a cycle $Z_1'$ on $X$ so that $\theta_z(Z_1') = d \cdot T_1$ for some $d \in \bN$.
We may define the intersection number $i(T_1\cdot T_2)$ via the above projection formula:
\[
i(T_1\cdot T_2)= \frac{1}{d} \cdot  i(\theta_z(Z_1')\cdot T_2):= \frac{1}{d}\cdot i(Z_1'\cdot \theta^z(T_2)).
\]
\end{remark}

\section{Projection Formula for Hecke cycles}
\label{sec: Proj-F}

Let $\fn$ be a nonzero proper ideal of $A$
such that 
$v\nmid \fn$ for every $v \in {\bf Ram}$ and 
${\rm ord}_v(\fa)\leq {\rm ord}_v(\fn)$ for every $v \notin {\bf Ram}\cup \{\infty\}$. 
The main result of this section is the following theorem:

\begin{theorem}\label{ex: proj-formula}
    $$i(\cZ \cdot \cZ_{\fa}) = \frac{1}{ [\cK :  \cK(\fn)]}\sum_{\tilde{g}} i(\cZ(\fn) \cdot \cZ(\fn, g)) $$
where the sum is taken over $\tilde{g} \in 
\cK(\fn) \backslash (\cD^\infty)^*\cap D^*(\bA_f)/\cK(\fn)
$
with $\text{\rm Nr}(g)\cO^{\infty} \cap k =\fa$.
\end{theorem}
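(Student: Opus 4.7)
The strategy is to apply Briney's projection formula to the natural covering
$$\theta \;=\; \pi\times \pi \;:\; X(\fn)\times X(\fn) \;\longrightarrow\; X(1)\times X(1),$$
where $\pi:X(\fn)\to X(1)$ comes from the inclusion $\cK(\fn)\subseteq \cK$. Using Proposition~\ref{prop: rigid}, the covering $\pi$ is realized as $D^*\backslash \Omega_r\times D^*(\bA_f)/\cK(\fn)\longrightarrow D^*\backslash \Omega_r\times D^*(\bA_f)/\cK$. Since $\fn$ is a central ideal, $\cK(\fn)$ is normal in $\cK$; moreover, the scalars $\bF_q^* = k^*\cap \cK\subseteq D^*$ act trivially on $\Omega_r$, hence on $X(\fn)$. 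Consequently $\pi$ is Galois with faithful group $G := \cK/\cK(\fn)\bF_q^*$ of order $N:=[\cK:\cK(\fn)]/(q-1)$; i.e.\ $[\cK:\cK(\fn)] = N(q-1)$. Under the diagonal identifications $\cZ(\fn)\cong X(\fn)$ and $\cZ\cong X(1)$, the restriction $\theta|_{\cZ(\fn)}\to \cZ$ is $\pi$ itself, so $\theta_z(\cZ(\fn))=N\cdot\cZ$. The projection formula (Proposition~\ref{prop: projform}(ii); cf.\ Remark~\ref{rem: self-int}) then gives
$$i(\cZ\cdot\cZ_\fa)\;=\;\frac{1}{N}\,i\bigl(\cZ(\fn)\cdot \theta^z(\cZ_\fa)\bigr).$$

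The remaining task is to compute $\theta^z(\cZ_\fa)=\sum_{[g]}\theta^z(\cZ_g)$. For each $[g]\in \cK\backslash(\cD^\infty)^*/\cK$ with $\hbox{Nr}(g)\cO^\infty\cap k = \fa$, I unwind the uniformization: a pair $([z,b]_\fn,[z',b']_\fn)$ lies in $\theta^{-1}(\cZ_g)$ iff there exist $\delta\in D^*$ and $k_1,k_2\in \cK$ with $z'=\delta z$ and $b' = \delta b\cdot k_1^{-1}g^{-1}k_2$, exhibiting it as a point of $\cZ(\fn,g')$ for some $g'\in \cK g\cK$. A parallel unwinding, using that the $D^*$-stabilizer of a generic $z\in\Omega_r$ equals the centre $k^*$, shows $\cZ(\fn,g_1)=\cZ(\fn,g_2)$ iff $g_2\in \bF_q^*\cdot \cK(\fn) g_1 \cK(\fn)$; hence the distinct components of $\theta^{-1}(\cZ_g)$ are indexed by $\cK(\fn)\bF_q^*\backslash \cK g\cK/\cK(\fn)\bF_q^*$, and each such class contains exactly $q-1$ cosets of $\cK(\fn)\backslash \cK g\cK/\cK(\fn)$. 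Since the faithful group $G\times G$ has trivial inertia at a generic component, Briney's factor is $\ell(\cZ_g)=1$, so
$$\theta^z(\cZ_g)\;=\;\cZ_g^*\;=\;\frac{1}{q-1}\sum_{\tilde g'\in \cK(\fn)\backslash \cK g\cK/\cK(\fn)}\cZ(\fn,\tilde g').$$
Disjointness of $\cK g\cK$ for distinct $[g]$ merges the two sums into $\theta^z(\cZ_\fa)=\frac{1}{q-1}\sum_{\tilde g}\cZ(\fn,\tilde g)$; plugging this back and using $N(q-1)=[\cK:\cK(\fn)]$ yields the claimed identity.

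The main technical obstacle is this identification $\theta^z(\cZ_g)=\frac{1}{q-1}\cZ_g^*$ inside Briney's framework: one must characterize precisely when two cycles $\cZ(\fn,g_1),\cZ(\fn,g_2)$ coincide, track the role of the central scalars $\bF_q^*$ (acting trivially while still separating $\cK(\fn)$-double cosets, thereby producing the factor $1/(q-1)$), and verify that the effective covering has trivial inertia at generic components. All three checks reduce to explicit double-coset computations in $D^*(\bA_f)$ together with the generic-stabilizer analysis on $\Omega_r$, but the bookkeeping must be done carefully enough to produce the rational multiplicity $1/(q-1)$ implicit in Briney's $\ell$-invariant.
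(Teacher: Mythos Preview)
Your strategy is the paper's strategy: apply Briney's projection formula to the cover $\theta=\pi\times\pi:X(\fn)\times X(\fn)\to X(1)\times X(1)$, identify $\theta_z(\cZ(\fn))$ as a multiple of $\cZ$, and compute $\theta^z(\cZ_g)$ as a sum of the cycles $\cZ(\fn,g')$. The paper's Lemmas~6.1 and~6.2 are exactly your inertia computation $\ell(\cZ_g)=1$ and your identification of $\theta^z(\cZ_g)$. So the approach is essentially the same.

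The one substantive difference is where the factor $(q-1)$ comes from. You extract it from the claim that each $\bF_q^*\cK(\fn)$--double coset in $\cK g\cK$ splits into exactly $q-1$ ordinary $\cK(\fn)$--double cosets, i.e.\ that $cg\notin\cK(\fn)g\cK(\fn)$ for $c\in\bF_q^*\setminus\{1\}$. This is true but not immediate: passing to reduced norms only gives $c^r=1$, which is insufficient when $r\mid q-1$. One needs a direct local check at a place $v\mid\fn$ (using the hypothesis $\operatorname{ord}_v(\fa)\le\operatorname{ord}_v(\fn)$, which forces the diagonal of $g_v\cK(\fn)_vg_v^{-1}$ to lie in $1+\fp_v^{e_v}$). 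The paper sidesteps this entirely: it works componentwise, observes that $\cZ(\fn)_{\ul i\ul j}\neq 0$ only for diagonal $\ul i,\ul j$, and restricts to the subgroup $\cK^\flat=\{\kappa\in\cK:\operatorname{Nr}(\kappa)\in\bF_q^*\}$, whence the factor $(q-1)$ drops out of the index $[\cK:\cK^\flat\cK(\fn)]$ rather than from any coset-counting.

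The paper's componentwise treatment is also what makes the use of Briney rigorous: Briney's framework (Section~5) is stated for \emph{irreducible} varieties, and $X(\fn)$, $X(1)$ are not irreducible. Your global formulation with Galois group $G\times G=(\cK/\bF_q^*\cK(\fn))^2$ glosses over the fact that this group does not act on a single irreducible piece; to be precise one must pass to the components $(X(\fn)\times X(\fn))_{\ul i\ul j}\to (X(1)\times X(1))_{\ul i}$ with their smaller Galois groups, which is exactly the bookkeeping the paper carries out. Once that is done, your argument and the paper's coincide.
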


The rest of the section is devoted to the proof this theorem.

\subsection{Projection Formula}

Let $X(1), X_g, X(\fn)$ and $X(\fn, g)$ be defined as before. As discussed in Proposition~\ref{prop: rigid}, we may write
$$X(1)=\coprod_i X(1)_i$$
$$X_g=\coprod_i X_{g, i}$$
$$X(\fn)=\coprod_i\coprod_j X(\fn)_{ij}$$
$$X(\fn, g)=\coprod_i\coprod_jX(\fn, g)_{ij}$$

In order to study the intersection $\cZ(\fn)\cdot \cZ(\fn,g)$, we will be working over product of these varieties. More precisely we have 
$$X(1)\times X(1)=\coprod_iX(1)_i \times \coprod_i X(1)_i, $$
\noindent which can be written as follows: we define $\ul i=(i_1, i_2)$ as the tuple where $i_1$ correspond to first component of $X(1) \times X(1)$ and  $i_2$ correspond to the second component, i.e., we can write
$$X(1)\times X(1)=\coprod_i X(1)_i \times \coprod_iX(1)_i=\coprod_{\ul i}(X(1)_{i_1} \times X(1)_{i_2}).$$

Similarly for $\ul i=(i_1, i_2)$ and $\ul j=(j_1, j_2)$ we can write 
$$X(\fn) \times X(\fn)=\coprod_i\coprod_j X(\fn)_{ij} \times \coprod_i\coprod_j X(\fn)_{ij} = \coprod_{\ul i} \coprod_{\ul j} (X(\fn)_{i_1j_1}\times X(\fn)_{i_2j_2})$$

Recall the definition of $\pi_{\fn, g}$:
$$\pi_{\fn, g}: X(\fn, g) \longrightarrow X(\fn) \times X(\fn)$$
\noindent sending an element $[z,b]_{\fn, g} $ to $([z,b]_{\fn}, [z, bg^{-1}]_{\fn})$, i.e, we have a morphism  
$$\coprod_i \coprod_j X(\fn, g)_{ij} \longrightarrow \coprod_{\ul i}\coprod_{\ul j}(X(\fn)_{i_1j_1} \times X(\fn)_{i_2j_2}).$$
In particular, $\pi_{\fn, g}$ gives a morphism 
$$ X(\fn, g)_{i_0j_0} \longrightarrow X(\fn)_{i_0j_0} \times X(\fn)_{i_2j_2},$$
where $i_2 j_2$ is determined by $i_0 j_0$ and $g$.
Moreover, 
let 
$pr_1: X(\fn)\times X(\fn)\longrightarrow X(\fn)$ be the projection to the first component
(which projects $ X(\fn)_{i_0j_0} \times X(\fn)_{i_2j_2}$
to $X(\fn)_{i_0j_0}$).
Then $pr_1 \circ \pi_{\fn, g}: X(\fn, g) \longrightarrow X(\fn)$ coincides with the original covering map.
So, we have the following relations between these objects:

\centerline{
\xymatrix{
& X(\fn,g)=\coprod_i\coprod_jX(\fn, g)_{ij} \ar@{-}[d] \\
X_g=\coprod_i X_{g, i} \ar@{-}[ur] \ar@{-}[dr] & X(\fn)=\coprod_i \coprod_j X(\fn)_{ij} \ar@{-}[d]\\
& X(1)=\coprod_i X(1)_i
}
}

For $\ul i=(i_1, i_2)$ we define $\cZ_{1,\ul i}$ as the intersection of the cycle $\cZ_1$ on $X(1) \times X(1)$ and the $\ul i$-th component of $X(1) \times X(1)$: 
$$\cZ_{1, \ul i}:=\cZ_1 \cap (X(1)_{i_1} \times X(1)_{i_2}).$$
Similarly, define
$$\cZ_{g, \ul i}:= \cZ_g \cap (X(1)_{i_1} \times X(1)_{i_2}).$$
For $\ul i=(i_1, i_2)$ and $\ul j=(j_1, j_2)$ define
$$\cZ(\fn)_{\ul i\ul j}=\cZ(\fn) \cap (X(\fn)_{i_1j_1} \times X(\fn)_{i_2j_2}),$$
$$\cZ(\fn, g)_{\ul i\ul j}=\cZ(\fn, g) \cap (X(\fn)_{i_1j_1} \times X(\fn)_{i_2j_2}).$$

Let $X:=X(\fn) \times X(\fn)$ and $Y:= X(1) \times X(1)$.  One can see by definition that $X(1)= X(\fn)/ (\cK/\bF_q^* \cK(\fn))$. By the map 
$$ X(\fn) \longrightarrow X(\fn) / (\cK/\bF_q^*\cK(\fn))$$
\noindent one can define the map $\theta: X \longrightarrow Y$. We denote by $\theta_{\ul i\ul j}$ for $\ul i=(i_1, i_2)$ and $\ul j=(j_1, j_2)$ the morphism $\theta$ on the irreducible components $(X(\fn) \times X(\fn))_{\ul i\ul j} \longrightarrow (X(1) \times X(1))_{\ul i}$, i.e, 
$$\theta_{\ul i\ul j}: X(\fn)_{i_1j_1} \times X(\fn)_{i_2j_2} \longrightarrow X(1)_{i_1} \times X(1)_{i_2}.$$

We want to remark that in Section \ref{sec: Briney}, the numerical components are defined for irreducible varieties. Let $\cZ$ be a cycle of $X(1) \times X(1)$. Write $\cZ=\sum_{\ul i} \cZ_{\ul i}$. 
\noindent Let $\cZ_{\ul i\ul j}$  be a component of $\theta_{\ul i \ul j}^{-1}(\cZ_{\ul i})$. We define
$$d_{\ul i\ul j}(\cZ_{ \ul i}):=[\cZ_{\ul i\ul j}:\cZ_{\ul i}] : \mbox{the degree of } \cZ_{\ul i\ul j} \mbox{ over } \cZ_{\ul i},$$
$$d_{\ul i\ul j}^s(\cZ_{ \ul i}):=[\cZ_{\ul i\ul j}:\cZ_{\ul i}]_s : \mbox{the separable degree of } \cZ_{\ul i\ul j} \mbox{ over } \cZ_{\ul i}$$
\noindent and 
$$d_{\ul i\ul j}^i(\cZ_{ \ul i}):=[\cZ_{\ul i \ul j}:\cZ_{\ul i}]_i : \mbox{the inseparable degree of } \cZ_{\ul i\ul j} \mbox{ over } \cZ_{\ul i}.$$
In particular, one has
\[
\sum_{\ul j} d_{\ul i \ul j}(\cZ_{\ul i}) = [X(\fn): X(1)] = [\cK: \bF_q^* \cK(\fn)] \quad \text{ for every ${\ul i}$.}
\]

Unlike Briney, we used the upper script $s$ and $i$ to point out the separable and inseparable degrees to avoid confusion with the irreducible component indices $\ul i\ul j$.

Note that in our case, we have the ``Galois group'' $\fg=\cK/\bF_q^*\cK(\fn) \times \cK/\bF_q^*\cK(\fn)$ and so 
\[
[X(\fn) \times X(\fn) : X(1) \times X(1)]=[\cK:\bF_q^*\cK(\fn)]^2.
\]
In particular: 

 let $\fg_{\ul i \ul j}$ be the Galois group associated with the irreducible component $(X(\fn)\times X(\fn))_{\ul i\ul j}\rightarrow (X(1) \times X(1))_{\ul i}$, of the covering $X(\fn)\times X(\fn)\rightarrow X(1)\times X(1)$. Then $\fg_{\ul i \ul j}$ is a subgroup of $\fg$. 
Also:

\begin{lemma}\label{lem:ell_ij}
    We have $\ell_{\ul i\ul j}(\cZ_{g, i})=1$.
\end{lemma}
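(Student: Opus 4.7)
The plan is to exploit the fact that the covering $X(\fn) \to X(1)$ is finite étale with a free Galois action, a consequence of the torsor structure on level structures established in Theorem~\ref{thm: torsor1} and Remark~\ref{lem: torsor}.

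First, I would note that by the torsor interpretation, $X(\fn) \to X(1)$ is a finite étale Galois covering of degree $[\cK : \bF_q^*\cK(\fn)]$, with Galois group $\cK/\bF_q^*\cK(\fn)$ acting. Because the degree of the étale cover equals the order of the acting group, the action is free on $X(\fn)$. Taking products, the covering $\theta : X(\fn) \times X(\fn) \to X(1) \times X(1)$ is étale and the induced action of $\fg = (\cK/\bF_q^*\cK(\fn))^2$ on $X(\fn) \times X(\fn)$ is free; the same then holds after restricting to any irreducible component, giving a free action of $\fg_{\ul i\ul j}$ on $X(\fn)_{i_1 j_1} \times X(\fn)_{i_2 j_2}$ realising $\theta_{\ul i\ul j}$.

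Next, I would unwind the defining relation
\[
\ell_{\ul i\ul j}(\cZ_{g,\ul i}) \;=\; \frac{|\fg_{\ul i\ul j}^i(A')|}{d_{\ul i\ul j}^i(\cZ_{g,\ul i})}
\]
(which follows from the two equalities in Remark~\ref{rem: numeq}) for a component $A'$ of $\theta_{\ul i\ul j}^{-1}(\cZ_{g,\ul i})$. Since $\theta_{\ul i\ul j}$ is étale, it is in particular separable, so $d_{\ul i\ul j}^i(\cZ_{g,\ul i}) = 1$. Since the action of $\fg_{\ul i\ul j}$ is free, any element $\sigma \in \fg_{\ul i\ul j}$ that fixes $A'$ pointwise must fix at least one point of $X(\fn) \times X(\fn)$, forcing $\sigma = 1$; hence $|\fg_{\ul i\ul j}^i(A')| = 1$. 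Combining these yields $\ell_{\ul i\ul j}(\cZ_{g,\ul i}) = 1$.

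The main point to justify carefully is the freeness of the action of $\cK/\bF_q^*\cK(\fn)$ on the coarse scheme $X(\fn)$. The subtlety is accounting for the global automorphisms of $\cD$-elliptic sheaves: since $\cD$ is a maximal order, these automorphism groups are exactly $\bF_q^*$, and this is precisely what is absorbed into the $\bF_q^*$-quotient appearing in the Galois group. Once this identification is made via Theorem~\ref{thm: torsor1} and Remark~\ref{lem: torsor}, the remaining action becomes free and the étale-covering argument goes through.
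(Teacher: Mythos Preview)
Your overall strategy matches the paper's: show that both the inseparable degree and the order of the inertia group equal $1$, then conclude $\ell_{\ul i\ul j}(\cZ_{g,\ul i})=1$. The gap is in your justification of triviality of the inertia group.

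The assertion that the automorphism group of every $\cD$-elliptic sheaf over $\bC_\infty$ is exactly $\bF_q^*$ is false, and with it the claim that $X(\fn)\to X(1)$ is \'etale with free $\cK/\bF_q^*\cK(\fn)$-action. A $\cD$-elliptic sheaf with complex multiplication by an imaginary order whose constant field is $\bF_{q^r}$ has automorphism group $\bF_{q^r}^*$, not $\bF_q^*$; concretely, whenever the constant-field extension $k\bF_{q^r}$ is imaginary (i.e.\ $r\nmid\deg\infty$) and embeds into $D$, such CM points exist on $X(\fn)$. At such a point $[z,b]_\fn$ the element $\kappa = b^{-1}\gamma b$ with $\gamma\in\bF_{q^r}^*\setminus\bF_q^*$ gives a nontrivial class in $\cK/\bF_q^*\cK(\fn)$ fixing $[z,b]_\fn$, because $\gamma-1$ is a unit in $\bF_{q^r}$ and hence cannot lie in $\fn\cD^\infty$. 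The torsor statement of Remark~\ref{lem: torsor} does not rescue this: it is a statement about the morphism of \emph{stacks} $\Ell_{C,\cD,I_\fn}\to\Ell_{C,\cD,\emptyset}$, and the target stack has nontrivial inertia precisely at these CM points, so the induced map to the coarse space $X(1)$ is ramified there.

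What the paper does instead is use the full component $\cZ(\fn,g)_{\ul i\ul j}$ rather than a single point: an element $\sigma=(\kappa_1,\kappa_2)$ of the inertia group must fix $([z,b]_\fn,[z,bg^{-1}]_\fn)$ for \emph{every} $z\in\Omega_r$, in particular for $z$ whose coordinates are algebraically independent over $k$. For such $z$ the stabilizer in $D^*$ is $k^*$, whence $\kappa_1,\kappa_2\in\bF_q^*\cK(\fn)$ and $\sigma=1$. In other words, although the group action on $X(\fn)\times X(\fn)$ is not free globally, the cycle $\cZ(\fn,g)_{\ul i\ul j}$ is $(r-1)$-dimensional and sweeps through generic $z$, which is what kills the inertia. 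Your argument can be repaired by replacing the freeness claim with this observation.
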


\begin{proof}

Given $\sigma \in \fg_{\ul i \ul j}^i(\cZ(\fn,g)_{\ul i \ul j})$ represented by $(\kappa_1,\kappa_2)$ for $\kappa_1,\kappa_2 \in \cK$, one has that for every $x = ([z,b],[z,bg^{-1}])$ in $ \cZ(\fn,g)_{\ul i \ul j}$,
\[
([z,b\kappa_1]_{\fn},[z,bg^{-1}\kappa_2]_{\fn}) = x^{\sigma} = x = ([z,b]_{\fn},[z,bg^{-1}]_{\fn}).
\]
As this equality holds for every $z \in \Omega_r$, we must get $\kappa_1,\kappa_2 \in \bF_{q}^*\cK(\fn)$, which means that $\sigma =1$.
Hence we get $[\fg_{\ul i\ul j}^i(\cZ(\fn,g)_{\ul i\ul j}):1]=1$.

Moreover, the fact that $d_{\ul i\ul j}^i(\cZ_{g,\ul i})=1$ follows from the fact that 
$X(\fn,g)_{ij}\rightarrow X_{g,i}$ is a Galois covering.
Therefore, 
$$\ell_{\ul i\ul j}(\cZ_{g, i})=\frac{[\fg_{\ul i\ul j}^i(\cZ(\fn,g)_{\ul i\ul j}):1]}{d_{\ul i\ul j}^i(\cZ_{g, \ul i})}=1.$$

\end{proof}

Now, assume that ${\rm ord}_v(\fa)\leq {\rm ord}_v(\fn)$ for every $v \notin {\bf Ram}\cup \{\infty\}$, which implies that $g \cK(\fn) g^{-1}, g^{-1}\cK(\fn) g \subset \cK$ as ${\rm Nr}(g)\cO^{\infty}\cap k = \fa$. Put 
\[
\cH_{\fa}:= \{g \in D^*(\bA_f)\cap \cD^{\infty}\mid {\rm Nr}(g)\cO^{\infty} \cap k = \fa\}.
\]

We can rewrite 
$  \sum_{\tilde{g} \in \cK(\fn) \backslash \cH_{\fa} / \cK(\fn)}
\cZ(\fn,g)\cdot \cZ(\fn) $ as

\begin{align*}
\sum_{\tilde{g} \in \cK(\fn) \backslash \cH_{\fa} / \cK(\fn)}
\cZ(\fn,g)\cdot \cZ(\fn) &= \sum_{\tilde{g} \in \cK(\fn) \backslash \cH_{\fa} / \cK(\fn)}
\sum_{\ul i, \ul j}
\cZ(\fn,g)_{\ul i \ul j}\cdot \cZ(\fn)_{\ul i \ul j}  \\
&\hspace{-2cm} =
\sum_{\bar{g} \in \cK \backslash \cE_{\fa} / \cK}
\sum_{(\kappa_1,\kappa_2) \in (\cK/\cK(n))^2/H_g} \sum_{\ul i, \ul j}\cZ(\fn,\kappa_1^{-1} g \kappa_2)_{\ul i \ul j} \cdot \cZ(\fn)_{\ul i \ul j},
\end{align*}
where $H_g = \{(\kappa, g^{-1} \kappa g)\mid \kappa \in \cK_g/\cK(\fn,g)\}$.
Note that $\cZ(\fn)_{\ul i \ul j}$ is nonempty if and only if $\ul i = (i,i)$ and $\ul j = (j,j)$.
Thus we do not need to take all pairs $(\kappa_1,\kappa_2)$ into account. More precisely, for every open compact subgroup $U$ of $D^*(\bA_f)$, let $U^{\flat}:=\{g \in U\mid {\rm Nr}(g) \in \bF_q^*\}$.
Then
\begin{eqnarray*}
& & \sum_{(\kappa_1,\kappa_2) \in (\cK/\cK(n))^2/H_g} \sum_{\ul i, \ul j}\cZ(\fn,\kappa_1^{-1} g \kappa_2)_{\ul i \ul j} \cdot \cZ(\fn)_{\ul i \ul j} \\
&=&
(q-1)\cdot \sum_{(\kappa_1,\kappa_2) \in (\cK^{\flat}/\bF_q^*\cK(n)^{\flat})^2/H_g^{\flat}} \sum_{\ul i, \ul j}\cZ(\fn,\kappa_1^{-1} g \kappa_2)_{\ul i \ul j} \cdot \cZ(\fn)_{\ul i \ul j},
\end{eqnarray*}
where $H_g^{\flat}:=\{(\kappa,g^{-1}\kappa g)\mid \kappa \in \cK_g^{\flat}/\bF_q^*\cK(\fn,g)^{\flat}\}$.
Moreover:

\begin{lemma}
    We have \[
\theta_{\ul i \ul j}^z(\cZ_{g,\ul i})=\sum_{(\kappa_1,\kappa_2) \in (\cK^{\flat}/\bF_q^*\cK(\fn)^{\flat})^2/H^{\flat}_g}
\cZ(\fn,\kappa_1^{-1} g \kappa_2)_{\ul i \ul j}. 
\]
\end{lemma}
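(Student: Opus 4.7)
By Lemma~\ref{lem:ell_ij}, $\ell_{\ul i \ul j}(\cZ_{g,\ul i}) = 1$, so from the definition $\theta^z(A) = \ell(A)\cdot A^*$ it suffices to identify the covering cycle $(\cZ_{g,\ul i})^*$ --- i.e.\ the sum of the distinct irreducible components of $\theta_{\ul i \ul j}^{-1}(\cZ_{g,\ul i})$, each appearing with multiplicity one --- with the right-hand sum of cycles $\cZ(\fn, \kappa_1^{-1} g \kappa_2)_{\ul i \ul j}$ indexed by $(\cK^{\flat}/\bF_q^*\cK(\fn)^{\flat})^2/H_g^{\flat}$.

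To do so, I would use the rigid-analytic uniformization (Proposition~\ref{prop: rigid}) together with~\eqref{eqn: pi-n-g}: a generic point of $\cZ_{g,\ul i}$ has the form $([z,b]_1, [z, bg^{-1}]_1)$, and its preimage under $\theta_{\ul i\ul j}$ consists of the points $([z, b\kappa_1]_{\fn}, [z, bg^{-1}\kappa_2]_{\fn})$ as $(\kappa_1, \kappa_2)$ ranges over the Galois group $(\cK/\bF_q^*\cK(\fn))^2$ of $\theta$. Restricting to the prescribed pair of connected components indexed by $\ul j$ forces $(\kappa_1, \kappa_2)$ to land in the $\flat$-subgroups, since connected components of $X(\fn)$ are classified by reduced-norm classes modulo $\bF_q^* \cdot \mathrm{Nr}(\cK(\fn))$. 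Setting $b' = b\kappa_1$ and matching the lifted point against a generic point $([z, b']_{\fn}, [z, b'(g')^{-1}]_{\fn})$ of $\cZ(\fn, g')$ forces $g'$ into the double coset of $\kappa_1^{-1} g \kappa_2$ modulo $\cK(\fn)$, showing that each preimage component takes the form $\cZ(\fn, \kappa_1^{-1} g \kappa_2)_{\ul i \ul j}$.

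To decide when two parameters $(\kappa_1,\kappa_2)$ and $(\kappa_1',\kappa_2')$ produce the same component, I would solve $\kappa_1'^{-1} g \kappa_2' \in \cK(\fn) \cdot \kappa_1^{-1} g \kappa_2 \cdot \cK(\fn)$; unwinding the resulting relation via the identity $\cK \cap g^{-1}\cK g = \cK_g$ shows that the two pairs must differ by an element of $H_g^{\flat} = \{(\kappa, g^{-1}\kappa g) : \kappa \in \cK_g^{\flat}/\bF_q^*\cK(\fn,g)^{\flat}\}$ acting in the natural way. Summing over the orbits of this action then yields the asserted identity.

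The main obstacle is the careful bookkeeping of the $\bF_q^*$ scalars --- arising because the center $k^*\subset D^*$ acts trivially on $\Omega_r$ --- combined with the left-versus-right conventions governing the $H_g^{\flat}$-action, so that the orbit count is exactly right. A secondary concern is verifying that distinct orbits correspond to genuinely distinct irreducible components of the preimage (and not merely to distinct cycles with the same underlying support), which amounts to showing that each cycle $\cZ(\fn, \kappa_1^{-1}g\kappa_2)_{\ul i \ul j}$ is irreducible and appears in $\theta_{\ul i\ul j}^{-1}(\cZ_{g,\ul i})$ with multiplicity one, a fact that should follow from the étale nature of the Galois covering $\theta_{\ul i\ul j}$ over the generic locus of $\cZ_{g,\ul i}$.
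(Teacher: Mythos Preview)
Your proposal is correct and follows essentially the same route as the paper. The paper's argument is organized slightly differently: it first identifies the Galois group of the irreducible covering $(X(\fn)\times X(\fn))_{\ul i\ul j}\to (X(1)\times X(1))_{\ul i}$ with $(\cK^{\flat}/\bF_q^*\cK(\fn)^{\flat})^2$, then checks directly that the stabilizer of the single component $\cZ(\fn,g)_{\ul i\ul j}$ under this Galois action is $H_g^{\flat}$, and finally invokes the identity $\cZ(\fn,g)^{(\kappa_1,\kappa_2)}=\cZ(\fn,\kappa_1^{-1}g\kappa_2)$ to rename the Galois-translates. This is exactly your lifting-and-matching computation, just packaged as ``Galois orbit with stabilizer $H_g^{\flat}$'' rather than ``solve for when two parameters give the same component''; in particular the paper's stabilizer computation absorbs both of the bookkeeping concerns you flag (the $\bF_q^*$ scalars and the distinctness of components) in one step.
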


\begin{proof}

By definition one has $$\theta_{\ul i\ul j}^z(\cZ_{g, \ul i})=\ell_{\ul i \ul j}(\cZ_{g, \ul i})\cZ_{g, \ul i}^*$$
\noindent where $\cZ_{g, \ul i}^*$
is the sum of the irreducible components of 
$\theta_{\ul i\ul j}^{-1}(\cZ_{g, \ul i})$. 
Note that the Galois group of the covering map $(X(\fn)\times X(\fn))_{\ul i \ul j}\rightarrow (X(1)\times X(1))_{\ul i}$ can be identified with $(\cK^{\flat}/\bF_q^*\cK(\fn)^{\flat})^2$, and one checks that
\[
\cZ(\fn,g)_{\ul i \ul j}^{(\kappa_1,\kappa_2)} = \cZ(\fn,g)_{\ul i \ul j} \quad \text{if and only if }\quad (\kappa_1,\kappa_2) \in H_g^{\flat}.
\]

Since $\ell_{\ul i\ul j}(\cZ_{g, \ul i})=1$ by Lemma \ref{lem:ell_ij}, we get
$$\theta_{\ul i\ul j}^z(\cZ_{g, \ul i})
=\sum_{(\kappa_1,\kappa_2) \in (\cK^{\flat}/\bF_q^*\cK(\fn)^{\flat})^2/H^{\flat}_g}
\cZ(\fn, g)_{\ul i\ul j}^{(\kappa_1, \kappa_2)}.$$
Therefore, it remains to show $\cZ(\fn, g)_{\ul i\ul j}^{(\kappa_1, \kappa_2)}=\cZ(\fn, \kappa_1^{-1}g\kappa_2)_{\ul i\ul j}$, which follows from the 
identity
\[
\cZ(\fn,g)^{(\kappa_1,\kappa_2)} = \cZ(\fn,\kappa_1^{-1}g\kappa_2), \quad \forall \kappa_1,\kappa_2 \in \cK.
\]

\end{proof}

\begin{remark}
It is known that the self-intersection number $i(\cZ(\fn)\cdot \cZ(\fn)) = \sum_{\ul i \ul j} i (\cZ(\fn)_{\ul i \ul j}\cdot \cZ(\fn)_{\ul i \ul j})$ is equal to $\chi\big(X(\fn)\big)$, the ``Euler--Poincar\'e characteristic of $\cZ(\fn)$''.
We refer the reader to Section~\ref{sec: S-I} for further discussion and the precise formula for $\chi\big(X(\fn)\big)$. 
\end{remark}

\noindent {\it Proof of Theorem~\ref{ex: proj-formula}.}
From the previous discussion with the projection formula,
we then get that
\begin{eqnarray*}
\sum_{\tilde{g} \in \cK(\fn) \backslash \cH_{\fa} / \cK(\fn)}
i(\cZ(\fn,g)\cdot \cZ(\fn))
&=& (q-1) \cdot \sum_{\bar{g} \in \cK\backslash \cH_{\fa}/ \cK} \sum_{\ul i , \ul j} i\big(\theta^z_{\ul i \ul j}(\cZ_{g,\ul i}) \cdot \cZ(\fn)_{\ul i \ul j}\big) \\
&=&(q-1)\cdot [\cK: \bF_q^*\cK(\fn)] \cdot \sum_{\bar{g} \in \cK\backslash \cH_{\fa}/  \cK}  \sum_{\ul i} i(\cZ_{g,\ul i} \cdot \cZ_{1,\ul i})\\
&=& [\cK:\cK(\fn)] \cdot i(\cZ\cdot \cZ_{\fa}).
\end{eqnarray*}
Therefore the result holds.
\hfill $\Box$

In the next section, we will determine the transversality of the intersection $\cZ(\fn,g)\cdot \cZ(\fn)$ when $\cZ(\fn,g)\neq \cZ(\fn)$.

\section{Transversal intersection}\label{sec: tran-int}

We start with recalling the following lemma:

\begin{lemma}(\cite[Chapter 3, Exercise 6.7]{GW})\label{lem-int}

Let $F$ be a field, let $X$ be a $k$-scheme, and let $Y_1$ and $Y_2$ be closed
subschemes of $X$ and let $Y_1 \cap Y_2$ be their schematic intersection. Let $x \in  (Y_1 \cap Y_2)(F)$ be an $F$-valued point and assume that $X$, $Y_1$, and $Y_2$ are smooth at $x$ over $F$ of relative dimension $d, d - c_1$ and $d - c_2$, respectively. The following assertions are equivalent.
\begin{enumerate}

\item $Y_1 \cap  Y_2$ is smooth of relative dimension $d - (c_1 + c_2)$.

\item $T_x Y_1 + T_x Y_2 = T_x X$.

\end{enumerate}
Here $T_x Y_i$ is the tangent space of $Y_i$ at $x$ for $i=1,2$, regarding as subspaces of $T_x X$, the tangent space of $X$ at $x$.

If these equivalent conditions are satisfied, we say that $Y_1$ and $Y_2$ intersect transversally.
\end{lemma}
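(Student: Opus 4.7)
The plan is to prove this standard lemma by reducing it to the Jacobian criterion of smoothness, working locally at $x$. First I would pass to an affine open neighborhood of $x$ and replace $X$ by $\Spec R$ with $R$ local at $x$, since both the smoothness of $Y_1 \cap Y_2$ and the equality $T_xY_1+T_xY_2=T_xX$ are conditions at the single point $x$. The smoothness hypotheses then let me write $Y_1$ locally as $V(f_1,\ldots,f_{c_1})$ with differentials $df_1|_x,\ldots,df_{c_1}|_x$ linearly independent in the cotangent space $\mathfrak{m}_x/\mathfrak{m}_x^2$, and similarly $Y_2$ as $V(g_1,\ldots,g_{c_2})$ with linearly independent $dg_j|_x$'s. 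This is exactly the content of the Jacobian/infinitesimal criterion of smoothness applied to the smooth closed subschemes $Y_i \subseteq X$ at $x$.

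With this setup, the schematic intersection $Y_1\cap Y_2$ is cut out near $x$ by the combined ideal $I = (f_1,\ldots,f_{c_1},g_1,\ldots,g_{c_2})$. The tangent space $T_xY_i \subseteq T_xX$ is by definition the annihilator of the subspace of the cotangent space spanned by the $df_j|_x$ (respectively by the $dg_j|_x$). Elementary duality for subspaces and their annihilators together with the dimension formula $\dim(V+W)=\dim V+\dim W - \dim(V\cap W)$ then rewrites condition (2) as the statement
\begin{equation*}
df_1|_x,\ldots,df_{c_1}|_x,\ dg_1|_x,\ldots,dg_{c_2}|_x \quad \text{are linearly independent in } \mathfrak{m}_x/\mathfrak{m}_x^2.
\end{equation*}

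From here, both implications follow from a second application of the Jacobian criterion. For $(2)\Rightarrow(1)$: the $c_1+c_2$ generators of $I$ have linearly independent differentials at $x$, hence they form the initial part of a regular system of parameters at $x$, so $Y_1\cap Y_2$ is smooth at $x$ of codimension exactly $c_1+c_2$ in $X$. For $(1)\Rightarrow(2)$: if $Y_1\cap Y_2$ is smooth at $x$ of relative dimension $d-(c_1+c_2)$, then its tangent space at $x$ has that dimension; but this tangent space is exactly the common kernel of the $c_1+c_2$ linear forms $df_i|_x,dg_j|_x$ on $T_xX$, which forces these forms to be linearly independent. The only technical point, namely that each $Y_i$ is locally a regular embedding cut out by generators with linearly independent differentials, is precisely what the smoothness of $X$ and $Y_i$ at $x$ provides, so no serious obstacle remains; the proof is essentially a bookkeeping exercise in the tangent/cotangent duality.
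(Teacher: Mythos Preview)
Your argument is correct and is the standard proof via the Jacobian criterion. Note, however, that the paper does not prove this lemma at all: it is merely recalled with a citation to \cite[Chapter 3, Exercise 6.7]{GW}, so there is no ``paper's own proof'' to compare against. Your write-up would serve perfectly well as a solution to that exercise.
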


Recall that we defined a morphism 
$$\pi_{\fn, g}: X(\fn, g) \longrightarrow X(\fn) \times X(\fn)$$
\noindent for every $g \in D^*(\bA_f)$ in Section \ref{sec-cycles}.
Put 
\[
\bX(\fn,g):=\pi_{\fn, g}(X(\fn, g))
\quad \text{ and } \quad 
\bX(\fn):= \bX(\fn,1).
\]
As $\bX(\fn,g)$ and $\bX(\fn)$ are both smooth of dimension $r-1$ in $X(\fn)\times X(\fn)$ (which has dimension $2r-2$), showing the transversality of the intersection $\bX(\fn) \cap \bX(\fn, g)$ reduces to verify that the intersection of the corresponding ``tangent spaces'' at each intersection point is trivial.
To proceed,
we will now consider our objects as rigid analytic spaces, and study the corresponding tangent spaces via their rigid analytic uniformization.\\

Recall the following identification in Proposition \ref{prop: rigid}: 
$$X(\fn) \times X(\fn) = \Big(\coprod_i X(\fn)_i\Big)\times \Big(\coprod_j X(\fn)_j\Big) \simeq \Big(\coprod_i \Gamma_i \backslash \Omega_r\Big) \times \Big(\coprod_j \Gamma_j \backslash \Omega_r\Big).$$
Put
$\bH_\gamma:=\{ (z, \gamma z) \mid z \in \Omega_r \}$
for every $\gamma \in D_\infty^* \cong \GL_r(k_\infty)$.
Then 
\[
\bX(\fn) \simeq \coprod_{i} \Gamma_i \backslash \bH_1,
\quad \text{ where $\Gamma_i$ acts diagonally on $\bH_1$}.
\]
Similarly, there exist elements $\gamma_{ij} \in D^*$ and arithmetic subgroups $\Gamma_{\fn,\gamma_{ij}} \subset D_\infty^*$ such that
$$\bX(\fn, g) = \coprod_{i,j} \bX(\fn,g)_{ij}, 
$$
where
$$
\bX(\fn,g)_{ij} := \bX(\fn,g) \cap \big(X(\fn)_i \times X(\fn)_j\big)
\simeq
\Gamma_{\fn, \gamma_{ij}} \backslash \bH_{\gamma_{ij}} \subset (\Gamma_i\times \Gamma_j)\backslash (\Omega_r \times \Omega_r).
$$
Here $\Gamma_{\fn, \gamma_{ij}}$ acts diagonally on $\bH_{\gamma_{ij}}$ as well.
Let $\alpha \in \bX(\fn, g) \cap \bX(\fn)$. Since the covering $\Omega_r \longrightarrow \Gamma_i \backslash \Omega_r$ is \'etale and the intersection behavior is a local property, it is sufficient to lift $\alpha$ to a point (still denoted by $\alpha$ by abuse of notation) in 
$\bH_{\gamma} \cap \bH_1 \subset \Omega_r \times \Omega_r$ 
for some nonconstant $\gamma \in D^*$.

\begin{lemma}
Suppose $r$ is a prime number distinct from the characteristic of $k$. Given $\gamma \in D^* \subset D_\infty^*\cong \GL_r(k_\infty)$ such that $\bH_{\gamma} \neq \bH_{1}$, suppose there exists $\alpha \in \bH_{\gamma} \cap \bH_{1} \subset \Omega_r\times \Omega_r$.
Let $T_{\alpha} \Omega_r^2 \cong \bC_{\infty}^{r-1}\times \bC_{\infty}^{r-1}$ be the tangent space of $\Omega_r \times \Omega_r$ at $\alpha$, and
$T_{\alpha} \bH_{\gamma}$ be the
tangent space of $\bH_{\gamma}$ at $\alpha$ (regarded as a
subspace of $T_{ \alpha}\Omega_r^2$
via the inclusion $\bH_{\gamma} \subset \Omega_r \times \Omega_r$).
Then $T_{\alpha}\bH_{\gamma}  \cap T_{\alpha} \bH_1 = \{0\}$.
\end{lemma}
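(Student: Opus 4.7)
Write $\alpha = (z_0, w_0) \in \bH_\gamma \cap \bH_1$, so $w_0 = z_0$ and $\gamma \cdot z_0 = z_0$ in $\Omega_r \subset \bP^{r-1}(\bC_\infty)$. Under the identifications $\bH_1 \subset \Omega_r \times \Omega_r$ and $\bH_\gamma \subset \Omega_r \times \Omega_r$, one has
\[
T_\alpha \bH_1 \;=\; \{(v,v) : v \in T_{z_0}\Omega_r\}, \qquad T_\alpha \bH_\gamma \;=\; \{(v,\, d\gamma_{z_0}(v)) : v \in T_{z_0}\Omega_r\},
\]
where $d\gamma_{z_0}$ is the differential at $z_0$ of the $\GL_r(k_\infty)$-action of $\gamma$ on $\Omega_r$. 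Therefore
\[
T_\alpha \bH_1 \cap T_\alpha \bH_\gamma \;\cong\; \ker\big(d\gamma_{z_0} - \mathrm{id}\big),
\]
and the lemma is reduced to showing that $1$ is not an eigenvalue of $d\gamma_{z_0}$ on $T_{z_0}\Omega_r$.

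The plan is to control the eigenvalues of $\gamma$ using the structure of $D$. Since $D$ is a central division algebra of dimension $r^2$ over $k$, the subfield $k(\gamma) \subset D$ has degree dividing $r$. The assumption $\bH_\gamma \neq \bH_1$ forces $\gamma \notin k^*$: otherwise $\gamma$ would act trivially on $\bP^{r-1}$. Since $r$ is prime, this leaves $[k(\gamma):k] = r$, so the minimal polynomial $f_\gamma \in k[x]$ of $\gamma$ has degree $r$ and coincides with the reduced characteristic polynomial of $\gamma$ in $D$. The hypothesis that $r$ differs from the characteristic of $k$ then guarantees that $k(\gamma)/k$ is separable, so $f_\gamma$ has $r$ pairwise distinct roots $\lambda_1, \ldots, \lambda_r$ over $\bC_\infty$. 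Fixing the embedding $D \hookrightarrow D_\infty \cong \bM_r(k_\infty) \subset \bM_r(\bC_\infty)$, the matrix $\gamma$ is thus diagonalizable with these $r$ distinct eigenvalues.

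Next I would spell out the local form of the action. Lift $z_0$ to an eigenvector $\tilde z_0 \in \bC_\infty^r$ of $\gamma$, say $\gamma \tilde z_0 = \lambda_1 \tilde z_0$, and pick a $\gamma$-invariant complement to $\bC_\infty \tilde z_0$ (possible since $\gamma$ is diagonalizable) with eigenvalue decomposition spanned by eigenvectors for $\lambda_2, \ldots, \lambda_r$. Using this complement as an affine chart on $\bP^{r-1}$ around $z_0$, the action of $\gamma$ becomes the linear map on $T_{z_0}\bP^{r-1}(\bC_\infty) \cong \bC_\infty^{r-1}$ whose eigenvalues are $\lambda_2/\lambda_1, \ldots, \lambda_r/\lambda_1$. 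Since the $\lambda_i$ are pairwise distinct, none of these ratios equals $1$, so $d\gamma_{z_0} - \mathrm{id}$ is invertible on $T_{z_0}\bP^{r-1}$, hence also on the open subspace $T_{z_0}\Omega_r$. Combined with the first paragraph, this yields $T_\alpha \bH_1 \cap T_\alpha \bH_\gamma = \{0\}$, as desired.

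The only genuinely nontrivial input is the separability step: one must use both that $r$ is prime (to upgrade $[k(\gamma):k] \mid r$ with $\gamma \notin k$ to $[k(\gamma):k] = r$) and that $r \neq \mathrm{char}(k)$ (to conclude $k(\gamma)/k$ has no inseparable part, so that $f_\gamma$ has $r$ distinct roots). Everything else is a routine calculation of the differential of a projective linear action at a fixed point; no further machinery is needed.
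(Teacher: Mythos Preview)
Your argument is correct and is in fact cleaner than the paper's own proof, though both rest on the same key input (separability of $k(\gamma)/k$, hence diagonalizability of $\gamma$ with $r$ distinct eigenvalues).

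The paper does not pass to an eigenbasis. Instead it fixes the standard affine chart on $\Omega_r$, writes $\gamma=(a_{ij})$ explicitly, and computes the partial derivatives $\partial_j w$ of $w=\gamma z$ to obtain spanning vectors for $T_\alpha\bH_\gamma$. The condition $(x,x)\in T_\alpha\bH_\gamma$ becomes a linear system $Bx=0$ for an explicit $(r-1)\times(r-1)$ matrix $B$ built from the $a_{ij}$ and the coordinates of $z^o$. The paper then shows $\det B=\det B'$ for a related $r\times r$ matrix $B'$ whose last row is $(z_1^o,\ldots,z_{r-1}^o,1)$, and argues that $\det B'=0$ would force this vector to lie in the range of $A=\gamma-w_r^o I_r$; since it already lies in the null space of $A$ and $A$ is diagonalizable (separability again), this is impossible.

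Your approach bypasses all of the coordinate computation by choosing a chart adapted to $\gamma$: once $\gamma$ is diagonal, the differential of the projective action at the fixed point $[e_1]$ is visibly $\mathrm{diag}(\lambda_2/\lambda_1,\ldots,\lambda_r/\lambda_1)$, and distinctness of the $\lambda_i$ finishes immediately. This is the standard way to compute the derivative of a M\"obius-type action at a fixed point, and it makes transparent why both hypotheses on $r$ are needed (primality to force $[k(\gamma):k]=r$, and $r\neq\mathrm{char}\,k$ to force separability). One cosmetic remark: since $\Omega_r$ is open in $\bP^{r-1}$, you have $T_{z_0}\Omega_r=T_{z_0}\bP^{r-1}$, so the phrase ``hence also on the open subspace $T_{z_0}\Omega_r$'' is unnecessary.
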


\begin{proof}
For simplicity, we may identify $\Omega_r$ with
\[
\left\{\begin{pmatrix} z_1 \\ \vdots \\ z_{r-1}\end{pmatrix} \in \bC_{\infty}^{r-1} \ \Bigg|\ c_1z_1+\cdots + c_{r-1}z_{r-1}+c_r \neq 0,\ \forall 0\neq (c_1,...,c_r) \in k_\infty^r\right\}.
\]
Given 
$$z= \begin{pmatrix} z_1 \\ \vdots \\ z_{r-1}\end{pmatrix}  \in \Omega_r 
\quad \mbox{ and } \quad 
\gamma =\begin{pmatrix}
    a_{11} & \cdots & a_{1r} \\
    \vdots & \ddots & \vdots \\
    a_{r1} & \cdots & a_{rr}   
\end{pmatrix} \in \GL_r(k_\infty),$$
write
\[
\begin{pmatrix}
    w_1 \\
    \vdots \\
    w_{r-1} \\
    w_r
\end{pmatrix}
:=
\begin{pmatrix}
a_{11} & \cdots & a_{1r} \\
    \vdots & \ddots & \vdots \\
    a_{r1} & \cdots & a_{rr}   
\end{pmatrix} 
\begin{pmatrix}
     z_1 \\
    \vdots \\
    z_{r-1}\\
    1
\end{pmatrix},
\quad \text{which says} \quad 
w:= \gamma z = 
\begin{pmatrix}
    w_1/w_r \\ \vdots \\ w_{r-1}/w_r 
\end{pmatrix} \in \Omega_r.
\]
Regarding $w_1,...,w_r$ as functions in $z_1,...,z_{r-1}$, let
$\partial_j w$ be the partial derivative of $w$ with respect to $z_j$, i.e, 
\begin{equation}\label{eqn: partial-w}
\partial_j w = \begin{pmatrix}
    \frac{\partial}{\partial z_j} \frac{w_1}{w_r} \\
    \vdots \\
    \frac{\partial}{\partial z_j}  \frac{w_{r-1}}{w_r}
\end{pmatrix}
\quad \text{ and }
\quad 
\frac{\partial}{ \partial z_j} \frac{w_i}{w_r} = w_r^{-1} (a_{ij} - a_{rj} \frac{w_i}{w_r}), \quad 1\leq i,j \leq r-1.
\end{equation}
Then for every $\alpha = (z^o,\gamma z^o) \in \bH_\gamma$, the tangent space $T_{\alpha}\bH_{\gamma}$ is spanned by the vectors
\[
\big(\begin{pmatrix}1 \\ \vdots \\ 0\end{pmatrix}, (\partial_1 w)(z^o)\big),..., 
\big(\begin{pmatrix}0 \\ \vdots \\ 1\end{pmatrix}, (\partial_{r-1} w)(z^o)\big) \quad \in \bC_{\infty}^{r-1} \times \bC_{\infty}^{r-1} \cong T_{\Omega_r^2,\alpha}.
\]
In particular,
the tangent space $T_{(z^o,z^o)}\bH_{1}$ is spanned by the vectors
\[
\big(\begin{pmatrix}1 \\ \vdots \\ 0\end{pmatrix}, \begin{pmatrix}1 \\ \vdots \\ 0\end{pmatrix},..., 
\big(\begin{pmatrix}0 \\ \vdots \\ 1\end{pmatrix}, \begin{pmatrix}0 \\ \vdots \\ 1\end{pmatrix}\big) \quad \in \bC_{\infty}^{r-1} \times \bC_{\infty}^{r-1} \cong T_{\Omega_r^2,(z^o,z^o)}.
\]

Suppose that $\alpha \in \bH_{\gamma}\cap \bH_1$, i.e.\ $\gamma z^o = z^o$.
Write
\[
z^o = \begin{pmatrix} z^o_1 \\ \vdots \\ z^o_{r-1} \end{pmatrix}
\quad \text{and} \quad 
w_i^o := w_i(z^o), \ 1\leq i \leq r-1.
\]
Then $\gamma z^o = z^o$ implies that $w_i^o/w_r^o = z_i^o$ for $1\leq i\leq r-1$, and 
\begin{equation}\label{eq: Az}
    A \begin{pmatrix}
    z_1^o \\
    \vdots \\
    z_{r-1}^o \\
    1
\end{pmatrix} = 0, \quad \text{ where } A:= 
    \begin{pmatrix}
    a_{11}-w_r^o & a_{12} & \cdots & a_{1r} \\
    a_{21} & a_{22}-w_r^o & \cdots & a_{2r} \\
    \vdots & \vdots  & \ddots & \vdots \\
    a_{r1} & a_{r2} & \cdots & a_{rr}-w_r^o
\end{pmatrix} .
\end{equation}

On the other hand, given $y \in T_{\alpha} \bH_{\gamma} \cap T_{\alpha} \bH_{1}$,
we may write $y= (x,x)$ with $x \in \bC_{\infty}^{r-1}$, and the condition $y=(x,x) \in T_{\alpha} \bH_{\gamma}$ implies that 
\begin{equation*} \label{eq: parder}
x = \begin{pmatrix}
    x_1\\
    \vdots \\
    x_{r-1}
\end{pmatrix} = x_1 (\partial_1 w)(z^o) + \cdots + x_{r-1} (\partial_{r-1} w)(z^o).
\end{equation*}
By \eqref{eqn: partial-w} we get
\begin{equation*}
    \begin{pmatrix}
        x_1 \\
        \vdots \\
        x_{r-1}
    \end{pmatrix} = (w_r^o)^{-1} \begin{pmatrix}
        a_{11}-a_{r1}z_1^o & \cdots & a_{r-1, 1}-a_{r1}z_{r-1}^o \\
        & \vdots & \\
        a_{1, r-1}-a_{r, r-1}z_1^o & \cdots & a_{r-1, r-1}-a_{r, r-1} z_{r-1}^o
    \end{pmatrix} \begin{pmatrix}
        x_1 \\
        \vdots \\
        x_{r-1}
    \end{pmatrix}
\end{equation*}
which is equivalent to
\begin{equation}\label{eq: LA}
B\begin{pmatrix}
        x_1 \\
        \vdots \\
        x_{r-1}
    \end{pmatrix} = 0, \quad \text{ where } B:=
    \begin{pmatrix}
        a_{11}-a_{r1}z_1^o -w_r^o & \cdots & a_{r-1, 1}-a_{r1}z_{r-1}^o \\
       \vdots & \ddots & \vdots \\
        a_{1, r-1}-a_{r, r-1}z_1^o & \cdots & a_{r-1, r-1}-a_{r, r-1} z_{r-1}^o-w_r^o
    \end{pmatrix}.
\end{equation}
By suitable row operations on the following matrix 
\[
B':=\begin{pmatrix}
    a_{11}-w_r^o & a_{21} &  & \cdots & a_{r1} \\
    a_{12} & a_{22}-w_r^o &  &\cdots & a_{r2} \\
    \vdots & \vdots  &  \ddots &  & \vdots \\
    a_{1,r-1} & a_{2,r-1} &  \cdots & a_{r-1,r-1}-w_r^o & a_{r,r-1}\\
    z_1^o & z_2^o &  \cdots & z_{r-1}^o & 1
\end{pmatrix},
\]
we can see that $\det(B')=\det(B)$.

Now, if $y=(x,x)\neq 0$, i.e.\ $x \neq 0$, then $\det(B') = \det(B)=0$.
As the field $k(\gamma)$ is separable of degree $r$ over $k$ under our assumption, the eigenspace of $A$ corresponding to the eigenvalue $0$ (with multiplicity one) is spanned by $(z_1^o,...,z_{r-1}^o,1)^t \in \bC_{\infty}^r$.
Hence $\det(B')=0$ implies that
the nonzero column vector $(z_1^o,...,z_{r-1}^o,1)^t \in \bC_{\infty}^r$ lies in the range of $A$, and also in the null space of $A$ by \eqref{eq: Az}.
However, the diagonalizability of $A$ in $M_r(\bC_{\infty})$ assures that the intersection of the range of $A$ and the null space of $A$ must be trivial, which is a contradiction.
Therefore $y = 0$, i.e.~the intersection of $T_{\alpha} \bH_{\gamma}$ and $T_{\alpha} \bH_1$ is trivial.
\end{proof}

Consequently, we have that:

\begin{corollary}
Suppose $r$ is a prime number distinct from the characteristic of $k$. 
When $\bX(\fn,g)\neq \bX(\fn)$, the intersection $\bX(\fn, g) \cap \bX(\fn)$ is transversal.
\end{corollary}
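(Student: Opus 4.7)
\medskip

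\noindent\textbf{Proof proposal.} The plan is to reduce the claim to the tangent-space computation established in the preceding lemma via the rigid-analytic uniformization, and then invoke Lemma~\ref{lem-int} as the transversality criterion.

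First, I would work locally at an arbitrary intersection point. Take $\alpha \in \bX(\fn,g) \cap \bX(\fn)$, lying in some component $\bX(\fn,g)_{ij} \cap \bX(\fn)_{(i,j)}$. Using the identifications $\bX(\fn)_{i} \simeq \Gamma_i \backslash \bH_1$ and $\bX(\fn,g)_{ij} \simeq \Gamma_{\fn,\gamma_{ij}} \backslash \bH_{\gamma_{ij}}$ inside $(\Gamma_i \times \Gamma_j)\backslash(\Omega_r \times \Omega_r)$, lift $\alpha$ to a point $(z^o, z^o) \in \bH_{\gamma_{ij}} \cap \bH_1 \subset \Omega_r \times \Omega_r$. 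Since the covering $\Omega_r \times \Omega_r \to (\Gamma_i\times \Gamma_j)\backslash (\Omega_r \times \Omega_r)$ is \'etale and transversality is local, it suffices to verify the condition on the lifted point.

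Next, I would argue that the hypothesis $\bX(\fn,g) \neq \bX(\fn)$ forces $\bH_{\gamma_{ij}} \neq \bH_{1}$ at the point $\alpha$: if $\gamma_{ij}$ were a central scalar in $D^*$, then $\bH_{\gamma_{ij}} = \bH_1$, and the two cycles would locally coincide at $\alpha$, contradicting the assumption that they differ as cycles on $X(\fn)\times X(\fn)$. Hence $\gamma_{ij}$ is a nonconstant element of $D^*$, and the previous lemma applies to give
\[
T_{(z^o,z^o)} \bH_{\gamma_{ij}} \cap T_{(z^o,z^o)} \bH_1 = \{0\}.
\]

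Finally, a dimension count closes the argument: the ambient space $\Omega_r \times \Omega_r$ has dimension $2(r-1)$, while each of $\bH_{\gamma_{ij}}$ and $\bH_1$ is smooth of dimension $r-1$. Trivial intersection of their tangent spaces therefore forces
\[
T_{(z^o,z^o)} \bH_{\gamma_{ij}} + T_{(z^o,z^o)} \bH_1 = T_{(z^o,z^o)}(\Omega_r \times \Omega_r),
\]
and Lemma~\ref{lem-int} yields transversality at $\alpha$. The only delicate step is the passage from the schematic intersection on the coarse moduli scheme to the rigid-analytic local picture on $\Omega_r \times \Omega_r$, which is handled by the \'etaleness of the uniformization maps (noting that $\Gamma_i$ acts through a finite quotient after passing to level $\fn$, so no stacky complications arise locally).
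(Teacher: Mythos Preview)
Your proposal is correct and follows essentially the same route as the paper: the paper states this corollary as an immediate consequence of the preceding lemma together with the setup (\'etale lift to $\Omega_r\times\Omega_r$, dimension count, and Lemma~\ref{lem-int}), giving no separate proof, and your write-up simply makes that deduction explicit. The one point worth tightening is your step ``locally coincide $\Rightarrow$ globally equal'': local coincidence only says the components through $\alpha$ agree, and to conclude $\bX(\fn,g)=\bX(\fn)$ you implicitly need the observation (cf.\ Corollary~\ref{cor: sel-int}) that if any component of $\bX(\fn,g)$ equals a diagonal component then $g\in k^*\cK(\fn)$---but the paper is equally terse here, simply asserting the lifted $\gamma$ is nonconstant.
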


\section{Counting the intersection numbers of Hecke correspondences}
\label{sec: count}

From the previous section, we know that when $r$ is a prime number distinct from the characteristic of $k$, the intersection of $\bX(\fn)$ and $\bX(\fn,g)$ is always transversal when they are distinct.
In this case, the intersection number $i(\cZ(\fn)\cdot \cZ(\fn,g))$ is simply equal to the cardinality of their intersection points.
In what follows, we shall count the intersection points in question by using ``optimal embeddings.''

We first release our condition on $r$, (i.e.\ $r$ is just a positive integer). Recall the following uniformization of $X(\mathfrak{n},g)$:
\[
X(\mathfrak{n},g)(\mathbb{C}_\infty) \cong 
D^* \backslash \Omega_r \times D^*(\bA_f) / \cK(\fn, g).
\]

Every $\bC_\infty$-valued point of $X(\mathfrak{n},g)$ corresponds to a class in $D^* \backslash \Omega_r \times D^*(\bA_f) / K(\fn, g)$.
Note that the morphism $\pi_{\fn,g}: X(\fn,g)\rightarrow X(\fn)\times X(\fn)$ gives an isomorphism between $X(\fn,g)$ and $\bX(\fn,g)$.
Then:

\begin{lemma}\label{lem: emb-1}
The set $\pi_{\fn,g}^{-1}\big(\mathbb{X}(\mathfrak{n})\cap \mathbb{X}(\mathfrak{n},g)\big)(\bC_\infty)$ can be identified with 
\[
\left\{
[z,b]_{\fn,g} \in D^* \backslash \Omega_r \times D^*(\bA_f) / \cK(\fn, g)\ \Bigg|\ 
\begin{tabular}{l}
\text{there exists  $\gamma \in D^*$ so that} \\
\text{$\gamma \cdot z = z$ and $b^{-1} \gamma b \in \cK(\mathfrak{n})g$}
\end{tabular}
\right\}.
\]
\end{lemma}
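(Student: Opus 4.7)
The proof will unfold definitions and translate the geometric condition into a double-coset statement. First, because $\pi_{\fn,g}$ restricts to an isomorphism $X(\fn,g)\xrightarrow{\sim}\bX(\fn,g)$ (as noted just above the lemma), one has $\pi_{\fn,g}^{-1}(\bX(\fn,g))=X(\fn,g)$, and therefore
\[
\pi_{\fn,g}^{-1}\big(\bX(\fn)\cap\bX(\fn,g)\big)=\pi_{\fn,g}^{-1}(\bX(\fn)).
\]
Thus the task reduces to characterizing those classes $[z,b]_{\fn,g}$ whose image under $\pi_{\fn,g}$ lies in the diagonal cycle $\bX(\fn)\subset X(\fn)\times X(\fn)$.

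The plan for the second step is to use the explicit formula~\eqref{eqn: pi-n-g}, namely
\[
\pi_{\fn,g}([z,b]_{\fn,g})=\big([z,b]_{\fn},\,[z,bg^{-1}]_{\fn}\big),
\]
together with the fact that $\bX(\fn)$ is precisely the diagonal of $X(\fn)\times X(\fn)$. Hence $\pi_{\fn,g}([z,b]_{\fn,g})\in\bX(\fn)$ if and only if the equality $[z,b]_{\fn}=[z,bg^{-1}]_{\fn}$ holds in $D^*\backslash\Omega_r\times D^*(\bA_f)/\cK(\fn)$.

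Finally, I would unpack this double-coset equality: it is equivalent to the existence of $\delta\in D^*$ and $\kappa\in\cK(\fn)$ satisfying $\delta\cdot z=z$ and $\delta b\kappa=bg^{-1}$. The second identity rearranges to $b^{-1}\delta b=g^{-1}\kappa^{-1}\in g^{-1}\cK(\fn)$; taking inverses then yields $b^{-1}\delta^{-1}b\in\cK(\fn)g$, and setting $\gamma:=\delta^{-1}$ we still have $\gamma\cdot z=z$. This produces the $\gamma\in D^*$ required by the lemma, and the same manipulation run in reverse gives the other implication.

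The only delicate point will be to stay consistent with the left/right conventions: the form that naturally emerges from collapsing the quotient is $b^{-1}\delta b\in g^{-1}\cK(\fn)$, whereas the statement prefers $b^{-1}\gamma b\in\cK(\fn)g$. Bridging the two requires the inversion $\gamma\leftrightarrow\gamma^{-1}$, which is harmless because the stabilizer condition $\gamma z=z$ is preserved under inversion. Apart from this bookkeeping, the argument is a direct unwinding of definitions, with no genuine obstacle.
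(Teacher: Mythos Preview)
Your proposal is correct and follows essentially the same route as the paper's proof: reduce to the condition $[z,b]_{\fn}=[z,bg^{-1}]_{\fn}$ and unwind the double-coset equality. The only cosmetic difference is that the paper parametrizes the equality as $(z,b)=\gamma\cdot(z,bg^{-1})\cdot\kappa'$, which gives $b^{-1}\gamma b=\kappa g\in\cK(\fn)g$ directly without the inversion step you perform; your detour through $\delta$ and $\gamma=\delta^{-1}$ is harmless but unnecessary if you orient the equivalence the other way.
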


\begin{proof}
Let $x \in X(\fn,g)(\bC_\infty)$.
From the unifomization of $X(\fn,g)$, we may identify $x$ with a class $[z,b]_{\fn,g} \in D^* \backslash \Omega_r \times D^*(\bA_f) / \cK(\fn, g)$.
Then $[z,b]_\fn$ (resp.\ $[z,bg^{-1}]_\fn$) is the class in $D^* \backslash \Omega_r \times D^*(\bA^{\infty}) / K(\fn)$ corresponding to $\pi_{\fn,g,1}(x)$ (resp.\ $\pi_{\fn,g,2}(x)$).
Suppose $\pi_{\fn,g}(x) \in \bX(\fn)$, which is equivalent to
\[
[z,b]_{\fn}=[z,bg^{-1}]_{\fn} \in D^* \backslash \Omega_r \times D^*(\bA_f) / \cK(\fn).
\]
This means that there exists $\gamma \in D^*$ and $\kappa \in \cK(\fn)$ so that $\gamma z = z$ and $\gamma b g^{-1} = b \kappa$, which says that 
\[
b^{-1} \gamma b = \kappa g \in \cK(\fn)g.
\]
Conversely, suppose there exists $\gamma \in D^*$ so that $\gamma \cdot z = z$ and $b^{-1}\gamma b \in \cK(\fn)g$.
Write $b^{-1}\gamma b = \kappa g$ where $\kappa \in \cK(\fn)$.
Then
\[
[z,b] = [\gamma^{-1}z,\gamma^{-1}b\kappa] = [z,bg^{-1}] \in D^* \backslash \Omega_r \times D^*(\bA_f) / \cK(\fn),
\]
which implies that $\pi_{\fn,g}(x) \in \bX(\fn)$.
This completes the proof.
\end{proof}

Set
\[
\mathcal{S}(\fn,g):=
\big\{(\gamma,z,b) \in D^*\times \Omega_r \times D^*(\bA_f) \mid \gamma \cdot z = z,\  b^{-1}\gamma b \in \cK(\fn)g\big\},
\]
which is equipped with a left action of $D^*$ and a right action of $K(\fn,g)$ defined below: for every $(\gamma,z,b) \in \mathcal{S}(\fn,g)$, $\gamma_0 \in D^*$, and $\kappa \in K(\fn,g)$,
\[
\gamma_0 \cdot (\gamma,z,b) \cdot \kappa := (\gamma_0 \gamma \gamma_0^{-1}, \gamma_0 \cdot z, \gamma_0 b \kappa).
\]
By Lemma~\ref{lem: emb-1}, we have a natural surjective map from $\mathcal{S}(\fn,g)$ to $\pi_{\fn,g}^{-1}\big(\bX(\fn)\cap \bX(\fn,g)\big)(\bC_\infty)$ sending $(\gamma,z,b)$ to (the point corresponding to) $[z,b]_{\fn,g}$.
Moreover:

\begin{lemma}\label{lem: emb-2}
The above map induces a bijection between
$D^*\backslash \mathcal{S}(\fn,g)/\cK(\fn,g)$ and $\pi_{\fn,g}^{-1}\big(\bX(\fn)\cap \bX(\fn,g)\big)(\bC_\infty)$.
\end{lemma}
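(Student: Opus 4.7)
The approach is the standard three-step argument for a map $(\gamma,z,b)\mapsto [z,b]_{\fn,g}$ descending to a bijection on the orbit space: (i) stability of $\mathcal{S}(\fn,g)$ under the prescribed $D^*\times \cK(\fn,g)$-action with constancy of the map on orbits, (ii) surjectivity, and (iii) injectivity.

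Step (i) reduces to the computation
\[
(\gamma_0 b\kappa)^{-1}(\gamma_0\gamma\gamma_0^{-1})(\gamma_0 b\kappa)=\kappa^{-1}(b^{-1}\gamma b)\kappa,
\]
which lies in $\kappa^{-1}\cK(\fn)g\kappa\subseteq \cK(\fn)g$ after rewriting $g\kappa=(g\kappa g^{-1})g$; this uses both conditions $\kappa\in\cK(\fn)$ and $g\kappa g^{-1}\in\cK(\fn)$ encoded in $\cK(\fn,g)=\cK(\fn)\cap g^{-1}\cK(\fn)g$. Constancy of $[z,b]_{\fn,g}$ on orbits is built into the target double-coset quotient. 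Step (ii) is a direct reformulation of Lemma~\ref{lem: emb-1}: each preimage point admits a representative $[z,b]_{\fn,g}$ together with a witness $\gamma\in D^*$ placing the triple into $\mathcal{S}(\fn,g)$.

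For step (iii), given $(\gamma_i,z_i,b_i)\in\mathcal{S}(\fn,g)$ with the same image, I would first extract $(\delta,\kappa)\in D^*\times\cK(\fn,g)$ from the equality $[z_1,b_1]_{\fn,g}=[z_2,b_2]_{\fn,g}$ and act on the first triple by $(\delta,\kappa)$ to normalize to $(z_1,b_1)=(z_2,b_2)=(z,b)$, with $\gamma_1$ replaced by $\gamma_1':=\delta\gamma_1\delta^{-1}$. Writing $b^{-1}\gamma_1'b=\lambda_1'g$ and $b^{-1}\gamma_2 b=\lambda_2 g$ with $\lambda_1',\lambda_2\in\cK(\fn)$, both $\gamma_1'$ and $\gamma_2$ then belong to the coset $\{\eta\in D^*:\eta z=z,\ \eta\in b\cK(\fn)gb^{-1}\}$; the candidate stabilizer pair realizing the final equivalence is built from the ratio $\lambda_2(\lambda_1')^{-1}\in\cK(\fn)$, which produces an element of the stabilizer of $(z,b)$ inside $D^*\times\cK(\fn,g)$ as soon as it is known to lie in $\cK(\fn,g)$.

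The hard part is precisely this last containment: one has $\lambda_2(\lambda_1')^{-1}\in\cK(\fn)$ for free, but membership in $g^{-1}\cK(\fn)g$ requires $g\lambda_2(\lambda_1')^{-1}g^{-1}\in\cK(\fn)$, which is not automatic from the definitions alone. I expect this to be handled using the compatibility ${\rm ord}_v(\fa)\leq{\rm ord}_v(\fn)$ imposed at the start of Section~\ref{sec: Proj-F}, which controls the conjugation behavior of $g$ on $\cK(\fn)$ place by place and forces the specific ratio arising from two fixed-point witnesses at the common pair $(z,b)$ into $g\cK(\fn)g^{-1}\cap\cK(\fn)=g\cK(\fn,g)g^{-1}$. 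Once this is established, the stabilizer element implements the required equivalence between $\gamma_1'$ and $\gamma_2$, completing the injectivity argument; the remainder of the proof is bookkeeping.
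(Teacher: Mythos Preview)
Your steps (i) and (ii) are fine, and the normalization at the start of (iii) is correct. The gap is in what follows. Once both triples read $(\gamma_1',z,b)$ and $(\gamma_2,z,b)$, any stabilizer pair $(\gamma_0,\kappa_0)\in D^*\times\cK(\fn,g)$ fixing $(z,b)$ must satisfy $\gamma_0 z=z$, i.e.\ $\gamma_0\in K_z^*$ where $K_z:=\{\gamma\in D^*:\gamma z=z\}\cup\{0\}$. The crucial fact you never invoke is that $K_z$ is a \emph{commutative field} (in fact an imaginary extension of $k$; this is \cite[Lemma~4.6]{Pap}). Since $\gamma_1',\gamma_2\in K_z^*$ as well, conjugation by any such $\gamma_0$ acts \emph{trivially} on the first coordinate: $\gamma_0\gamma_1'\gamma_0^{-1}=\gamma_1'$. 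Hence no stabilizer element can carry $(\gamma_1',z,b)$ to $(\gamma_2,z,b)$ unless already $\gamma_1'=\gamma_2$, and your plan to manufacture one from the ratio $\lambda_2(\lambda_1')^{-1}$ cannot succeed regardless of whether that ratio lies in $\cK(\fn,g)$.

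The paper instead proves $\gamma_1'=\gamma_2$ directly. The ratio $\eta:=\gamma_1'\gamma_2^{-1}$ lies in $K_z^*$ and satisfies $b^{-1}\eta b=\lambda_1'\lambda_2^{-1}\in\cK(\fn)$; thus $\eta$ is a unit of $O_{K_z}$. Because $K_z$ is imaginary with $[K_z:k]\mid r$, the unit group $O_{K_z}^*$ is contained in a finite field with at most $q^r$ elements, so $\eta$ is a root of unity with minimal polynomial in $\bF_q[x]$. The congruence $b^{-1}\eta b\equiv 1\bmod\fp$ at a prime $\fp\mid\fn$ forces the characteristic polynomial of $\eta$ to reduce to $(x-1)^r$, whence $\eta=1$. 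The condition $\mathrm{ord}_v(\fa)\leq\mathrm{ord}_v(\fn)$ plays no role here; what is needed is the field structure of $K_z$ and the finiteness of its units, not any control on conjugation by $g$.
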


\begin{proof}
Given $(\gamma,z,b),(\gamma',z',b') \in \mathcal{S}(\fn,g)$, suppose 
\[
[z,b]_{\fn,g} = [z',b']_{\fn,g} \quad \in X(\fn,g),
\]
which implies that there exists $\gamma_0 \in D^*$ and $\kappa_0 \in \cK(\fn,g)$ so that
$z' = \gamma_0 z$ and $b' = \gamma_0 b \kappa_0$.
Thus
\[
\gamma_0^{-1}\cdot(\gamma',z',b')\cdot \kappa_0^{-1} = (\gamma_0^{-1}\gamma'\gamma_0,z,b) \quad \in \cS(\fn,g).
\]
Put $\gamma_1 = \gamma_0^{-1} \gamma' \gamma_0 \in D^*$, which satisfies that $\gamma_1 \cdot z = z$ and $b^{-1} \gamma_1 b \in K(\fn)g$.

Note that by \cite[Lemma 4.6]{Pap}, one has that $K_z := \{ \gamma \in D^*\mid \gamma \cdot z=z\}\cup \{0\}$ is a subfield of $D$ which is imaginary with respect to $\infty$ (i.e.\ $\infty$ is non-split in $K_z$).
Hence
\[
\gamma_2:= \gamma_1 \gamma^{-1} \in K_z^* \quad \text{with} \quad b^{-1}\gamma_2 b \in \cK(\fn).
\]
This says in particular that $\gamma_2$ is a unit in $O_{K_z}$, the integral closure of $A$ in $K_z$, and $b^{-1}\gamma_2 b \equiv 1 \bmod \fn$.
Since $K_z$ is imaginary and the degree $[K_z:k]$ divides $r$, the units of $O_{K_z}$ must be contained in a finite field with $q^r$ elements.
The condition $b^{-1}\gamma_2 b \equiv 1 \bmod \fp$ then implies that $\gamma_2 = 1$,
whence
\[
\gamma_0^{-1} \cdot (\gamma',z',b')\kappa_0^{-1} = (\gamma,z,b).
\]
Therefore $(\gamma,z,b)$ and $(\gamma',z',b')$ represent the same class in $D^*\backslash \mathcal{S}(\fn,g)/\cK(\fn,g)$, and the proof is complete.
\end{proof}

\begin{corollary}\label{cor: sel-int}
Given $g \in D^*(\bA_f)$, one has that $\bX(\fn) = \bX(\fn,g)$ if and only if $g \in k^* \cdot \cK(\fn)$.
\end{corollary}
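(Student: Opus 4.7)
My plan is to handle the two implications separately. For the ``if'' direction, suppose $g = c\kappa$ with $c \in k^*$ and $\kappa \in \cK(\fn)$. Because $\cK(\fn)$ is the kernel of the reduction $(\cD^{\infty})^* \to (\cD^{\infty}/\fn)^*$ and hence normal in $(\cD^{\infty})^*$, while $c$ is central, I would first check that $\cK(\fn,g) = \cK(\fn)\cap g^{-1}\cK(\fn)g = \cK(\fn)$, which canonically identifies $X(\fn,g)$ with $X(\fn)$. Then I would verify directly that the two projections $\pi_{\fn,g,1}$ and $\pi_{\fn,g,2}$ coincide on every representative $[z,b]_{\fn,g}$ via the chain of equalities
\[
[z,bg^{-1}]_{\fn} = [z,b\kappa^{-1}c^{-1}]_{\fn} = [cz,\, cb\kappa^{-1}c^{-1}]_{\fn} = [z,\, b\kappa^{-1}]_{\fn} = [z,b]_{\fn},
\]
which use that $c \in k^*$ acts trivially on $\Omega_r$ (scalars fix points in projective space), is central in $D^*(\bA_f)$, and that $\kappa^{-1} \in \cK(\fn)$. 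Consequently the image of $\pi_{\fn,g}$ lies in the diagonal $\bX(\fn)$; since $\pi_{\fn,g,1}$ already surjects onto $X(\fn)$, we conclude that $\bX(\fn,g) = \bX(\fn)$.

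For the ``only if'' direction, assume $\bX(\fn) = \bX(\fn,g)$, so the image of $\pi_{\fn,g}$ lies in the diagonal. Lemma~\ref{lem: emb-1} then provides, for every pair $(z,b) \in \Omega_r \times D^*(\bA_f)$, an element $\gamma \in D^*$ with $\gamma z = z$ and $b^{-1}\gamma b \in \cK(\fn)g$. The crucial step is to choose $z \in \Omega_r$ whose stabilizer in $D^*$ equals $k^*$. By \cite[Lemma 4.6]{Pap} (used also in the proof of Lemma~\ref{lem: emb-2}), any $\gamma \in D^*\setminus k^*$ that fixes some point of $\Omega_r$ generates an imaginary subfield of $D$, and its fixed locus in $\Omega_r$ is a proper rigid-analytic subvariety. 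Since $D^*$ is countable, the union over $\gamma \in D^*\setminus k^*$ of these fixed loci cannot exhaust $\Omega_r$ over the uncountable field $\bC_{\infty}$. Choosing such $z$ and taking $b = 1$, the required $\gamma$ must lie in $k^*$, so $\gamma = b^{-1}\gamma b \in \cK(\fn)g$ yields $g = \kappa^{-1}\gamma \in \cK(\fn)\cdot k^* = k^*\cdot\cK(\fn)$ by the centrality of $k^*$.

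The main obstacle is the genericity argument for $z$: namely, that a countable union of proper rigid-analytic subvarieties of $\Omega_r$ does not cover $\Omega_r$ over the uncountable field $\bC_{\infty}$. This is standard for the Drinfeld symmetric space but should be recorded explicitly; the remainder of the proof consists of direct double-coset manipulations together with repeated use of the centrality of $k^*$ in $D^*(\bA_f)$ and the triviality of its action on $\Omega_r$.
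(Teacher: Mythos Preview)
Your proof is correct and follows the same route as the paper. The paper dismisses the ``if'' direction as clear (your explicit double-coset chain is a welcome expansion), and for the ``only if'' direction it likewise invokes Lemma~\ref{lem: emb-1}/\ref{lem: emb-2} and then chooses a generic $z$ whose only stabilizer in $D^*$ is $k^*$. The sole cosmetic difference is the genericity device: the paper picks $z$ with coordinates algebraically independent over $k$, while you argue via countability of $D^*$ and uncountability of $\bC_\infty$; both produce a $z$ fixed only by scalars, and the conclusion $g \in k^*\cK(\fn)$ follows identically.
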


\begin{proof}
It is clear that $\bX(\fn) = \bX(\fn,g)$ when $g \in k^* \cdot \cK(\fn)$.
Conversely, suppose $\bX(\fn) = \bX(\fn,g)$.
By Lemma~\ref{lem: emb-2}, we have that for every $z \in \Omega_r$, there exists $\gamma \in D^*$ such that $\gamma \cdot z = z$ and $b^{-1} \gamma b \in \cK(\fn)g$.
When taking $z$ with algebraically independent coordinates over $k$, we must have that $\gamma \in k^*$,
whence $g = \gamma \kappa \in k^* \cK(\fn)$ as desired.
\end{proof}

\subsection[The case when X(n) neq X(n,g)]{The case when $\bX(\fn) \neq \bX(\fn,g)$}

Let 

\[
\cE(\fn,g) :=\big\{ (\gamma,b) \in D^* \times D^*(\bA_f)\ \big|\ k(\gamma) \text{ is imaginary over $k$, } b^{-1} \gamma b \in  \cK(\fn)g\big\}.
\]
Then we have a natural map from $\phi:\cS(\fn,g)\rightarrow \cE(\fn,g)$ defined by $\phi(\gamma,z,b)=(\gamma,b)$ for every triple $(\gamma,z,b) \in \cS(\fn,g)$.
On the other hand, the fiber of each $(\gamma,b)$ in $\cE(\fn,g)$ under $\phi$ can be identified with the set of fix points of $\gamma$ on $\Omega_r$, which is always non-empty.
Therefore $\phi$ is surjective.
Moreover, the following lemma holds:

\begin{lemma}
The map
\[
\bar{\phi}: D^*\backslash \cS(\fn,g)/\cK(\fn,g) \longrightarrow D^*\backslash \cE(\fn,g)/\cK(\fn,g) 
\]
induced by $\phi$ is surjective.
Moreover, 
suppose $\bX(\fn) \neq \bX(\fn,g)$ and $r$ is a prime number.
For each class $[\gamma,b] \in D^*\backslash \cE(\fn,g)/K(\fn,g)$, the cardinality of $\bar{\phi}^{-1}([\gamma,b])$ is $r$ $($resp.\ $1)$ if $k(\gamma)$ is separable $($resp.\ purely inseparable$)$ over $k$.
\end{lemma}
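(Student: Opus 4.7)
The plan is to establish surjectivity of $\bar\phi$ first, then to identify each fiber with a quotient of $\operatorname{Fix}(\gamma)\cap \Omega_r$ under an action that turns out to be trivial, and finally to count the fixed points separately in the separable and purely inseparable cases. Surjectivity follows immediately from the discussion preceding the lemma: for any $(\gamma,b) \in \cE(\fn,g)$, the element $\gamma$ admits a fixed point $z$ on $\Omega_r$, and then $(\gamma,z,b) \in \cS(\fn,g)$ lifts $(\gamma,b)$; passing to double quotients gives surjectivity of $\bar\phi$.

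For the fiber over $[\gamma,b]$, I would fix a representative $(\gamma,b)$ and, by the orbit-stabilizer style pull-back used in Lemma~\ref{lem: emb-2}, identify $\bar\phi^{-1}([\gamma,b])$ with a quotient of $\operatorname{Fix}(\gamma)\cap \Omega_r$ by the subgroup $H(\gamma,b):=\{\gamma_0 \in C_{D^*}(\gamma) : b^{-1}\gamma_0 b \in \cK(\fn,g)\}$ acting as $z \mapsto \gamma_0 z$.  Here the primality of $r$ is crucial: $[k(\gamma):k]$ divides $r$, and since Corollary~\ref{cor: sel-int} combined with the assumption $\bX(\fn)\neq \bX(\fn,g)$ rules out $\gamma \in k^*$, we must have $[k(\gamma):k]=r$. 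So $k(\gamma)$ is a maximal subfield of $D$, the double centralizer theorem gives $C_{D^*}(\gamma) = k(\gamma)^*$, and every such $\gamma_0$ preserves each one-dimensional eigenspace of $\gamma$ and acts on it by a scalar, which is trivial on $\bP^{r-1}$. Thus $H(\gamma,b)$ acts trivially and $|\bar\phi^{-1}([\gamma,b])| = |\operatorname{Fix}(\gamma)\cap \Omega_r|$.

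It remains to count fixed points. In the separable case, the imaginary hypothesis forces $k(\gamma)\otimes_k k_\infty$ to be a field of degree $r$, so the minimal polynomial of $\gamma$ is separable and irreducible over $k_\infty$ with $r$ distinct roots in $\bar k_\infty$, giving $r$ one-dimensional eigenspaces. A Galois-transitivity argument then shows no such eigenspace lies on a $k_\infty$-rational hyperplane: otherwise every $\mathrm{Gal}(\bar k_\infty/k_\infty)$-conjugate of that eigenspace (which exhausts all $r$ of them) would, and the sum of all $r$ eigenspaces (equal to $\bC_\infty^r$ by diagonalizability) would be contained in a proper $k_\infty$-subspace, a contradiction. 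Hence all $r$ eigenspaces lie in $\Omega_r$. In the purely inseparable case, necessarily $r = \mathrm{char}(k)$, and $\gamma$ acts as a single Jordan block with eigenvalue $\pi := a^{1/r}$ over $\bar k_\infty$; the unique fixed line $[\pi^{r-1}:\cdots:\pi:1] \in \bP^{r-1}(\bC_\infty)$ lies in $\Omega_r$ thanks to the $k_\infty$-linear independence of $1,\pi,\ldots,\pi^{r-1}$ in $k_\infty(\pi)$, yielding exactly one fixed point.

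The main obstacle I anticipate is the $\Omega_r$-rationality check in the separable case, where the Galois-transitivity argument relies on the imaginary hypothesis genuinely promoting the degree-$r$ extension $k(\gamma)/k$ to a field extension of $k_\infty$ of the same degree at the single place above $\infty$, so that the minimal polynomial of $\gamma$ is irreducible over $k_\infty$. Once that is in place, the remaining steps are elementary centralizer and linear algebra considerations.
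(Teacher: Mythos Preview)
Your approach is correct and structurally close to the paper's, but the key step differs. Both proofs reduce the fiber over $[\gamma,b]$ to the set $\operatorname{Fix}(\gamma)\cap\Omega_r$ modulo the stabilizer of $(\gamma,b)$. The paper then shows that this stabilizer is \emph{trivial}: any $\gamma_0$ in it lies in $k(\gamma)^*$ (by the double centralizer theorem) and satisfies $b^{-1}\gamma_0 b\in\cK(\fn,g)$, so by the unit/congruence argument of Lemma~\ref{lem: emb-2} one gets $\gamma_0=1$. You instead show only that the stabilizer \emph{acts trivially} on $\operatorname{Fix}(\gamma)$, since any $\gamma_0\in k(\gamma)^*$ scales each eigenline of $\gamma$ and hence fixes the corresponding point of $\bP^{r-1}$. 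This is slicker for the purpose at hand and bypasses the level-$\fn$ input. You also supply the Galois-transitivity argument that the eigenlines lie in $\Omega_r$, which the paper simply asserts.

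There is one genuine gap in your purely inseparable case. Your claim that $1,\pi,\dots,\pi^{r-1}$ are $k_\infty$-linearly independent is equivalent to $\pi\notin k_\infty$, i.e.\ $a\notin k_\infty^p$ where $\gamma^p=a$. The hypothesis that $k(\gamma)$ is imaginary does \emph{not} directly give this: a purely inseparable extension has a unique place over $\infty$ regardless. What you need is $k\cap k_\infty^p=k^p$, which holds because $[k:k^p]=[k_\infty:k_\infty^p]=p$ and $k\subset k_\infty^p$ would force $k_\infty\subset k_\infty^p$ upon completion. Also, the explicit eigenvector $[\pi^{r-1}:\cdots:\pi:1]$ presupposes the companion-matrix realization of $\gamma$ in $\GL_r(k_\infty)$; you should either say so or note that membership in $\Omega_r$ is invariant under $\GL_r(k_\infty)$-conjugation, so you may choose this realization.
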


\begin{proof}
The surjectivity is clear.
Suppose $\bX(\fn) \neq \bX(\fn,g)$ and $r$ is a prime number.
Let $(\gamma,b) \in \cE(\fn,g)$.
By assumption one has that
$k \neq k(\gamma) \subset D$ is a maximal subfield of $D$ which is imaginary.
Thus there are $r$ (resp.\ $1$) fixed point(s) of $\gamma$ on $\Omega_r$, say $z_1,...,z_r$ (resp.\ $z_0$), corresponding to distinct eigenvalues (resp.\ the unique eigenvalue) of $\gamma$.
Hence
\[
\phi^{-1}(\gamma,b) = 
\begin{cases}
\{(\gamma,z_1,b),\cdots (\gamma,z_r,b)\}, & \text{ if $k(\gamma)/k$ is separable},\\
\{(\gamma,z_0,b)\}, & \text{ if $k(\gamma)/k$ is purely inseparable.}
\end{cases}
\]

It suffices to verify that $(\gamma,z_1,b),...,(\gamma,z_r,b)$ represents distinct classes in the space $D^*\backslash \cS(\fn,g)/\cK(\fn,g)$ when $k(\gamma)/k$ is separable.
To show this,
suppose there exists $\gamma_0 \in D^*$ and $\kappa_0 \in \cK(\fn,g)$ such that
\[
(\gamma,z_i,b)  = \gamma_0\cdot (\gamma,z_j,b) \cdot \kappa_0 = (\gamma_0 \gamma \gamma_0^{-1}, \gamma_0 \cdot z_j, \gamma_0 b \kappa_0).
\]
Since $k(\gamma)$ is a maximal subfield of $D$, the centralizer of $k(\gamma)$ is itself.
Hence $\gamma_0 \in k(\gamma)^*$ and
\[
b^{-1}\gamma_0 b = \kappa_0^{-1} \in \cK(\fn,g).
\]
By our assumption on $\fn$, we can apply the same argument in the proof of Lemma~\ref{lem: emb-2} to get $\gamma_0 = 1$, which implies $z_i=z_j$ and $\kappa_0 = 1$.
\end{proof}

\begin{corollary}\label{cor: emb}
Suppose $X(\fn) \neq X(\fn,g)$ and $r$ is a prime number distinct from the characteristic of $k$.
The cardinality of $\Delta_\fn^{-1}\big(\bX(\fn)\cap \bX(\fn,g)\big)(\bC_\infty)$ is equal to 
\[
r \cdot \#\left(D^*\backslash \cE(\fn,g)/\cK(\fn,g)\right).
\]
\end{corollary}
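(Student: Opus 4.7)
The plan is to chain together the two identifications established immediately before: Lemma~\ref{lem: emb-2} identifies $\pi_{\fn,g}^{-1}\bigl(\bX(\fn)\cap \bX(\fn,g)\bigr)(\bC_\infty)$ with the double coset space $D^*\backslash \cS(\fn,g)/\cK(\fn,g)$, and the preceding lemma provides a surjection
\[
\bar\phi : D^*\backslash \cS(\fn,g)/\cK(\fn,g) \twoheadrightarrow D^*\backslash \cE(\fn,g)/\cK(\fn,g)
\]
whose fibers have cardinality $r$ when $k(\gamma)/k$ is separable and cardinality $1$ when $k(\gamma)/k$ is purely inseparable. So the task reduces to ruling out the purely inseparable case for every class in $\cE(\fn,g)$ under the hypotheses of the corollary.

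This is where the arithmetic assumptions on $r$ are used. First, since $\bX(\fn)\neq \bX(\fn,g)$, Corollary~\ref{cor: sel-int} gives $g\notin k^*\cK(\fn)$. For any $(\gamma,b)\in \cE(\fn,g)$, if $\gamma\in k^*$ were scalar then $b^{-1}\gamma b=\gamma$, so the condition $b^{-1}\gamma b\in \cK(\fn)g$ would force $g\in k^*\cK(\fn)$, contradicting our assumption. Thus $k(\gamma)\supsetneq k$, and since $k(\gamma)\subset D$ its degree over $k$ divides $r$; as $r$ is prime we get $[k(\gamma):k]=r$. Because $r$ is coprime to the characteristic of $k$, the extension $k(\gamma)/k$ is automatically separable (a degree-$r$ extension whose degree is not divisible by the characteristic cannot contain an inseparable element).

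Combining these, the fibers of $\bar\phi$ are uniformly of size $r$, so
\[
\#\bigl(\pi_{\fn,g}^{-1}(\bX(\fn)\cap \bX(\fn,g))(\bC_\infty)\bigr)
= \# \bigl(D^*\backslash \cS(\fn,g)/\cK(\fn,g)\bigr)
= r\cdot \#\bigl(D^*\backslash \cE(\fn,g)/\cK(\fn,g)\bigr),
\]
which is the asserted identity. There is no serious obstacle here: both inputs (the bijection in Lemma~\ref{lem: emb-2} and the fiber count in the preceding lemma) have already been proved, and the only new content is the short arithmetic observation that $r$ prime and $r\neq \operatorname{char}(k)$, together with the non-triviality of $\gamma$ forced by Corollary~\ref{cor: sel-int}, rule out the purely inseparable case entirely. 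The one place to stay alert is exactly this reduction: one must make sure that the hypothesis $\bX(\fn)\neq \bX(\fn,g)$ is applied uniformly to every representative in $\cE(\fn,g)$, not just to a single class, which is immediate from the definition of $\cE(\fn,g)$.
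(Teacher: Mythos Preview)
Your proof is correct and follows exactly the approach the paper intends: the corollary is stated without an explicit proof there, as it is an immediate consequence of combining Lemma~\ref{lem: emb-2} with the fiber-count lemma just before, once one observes that the hypothesis $r \neq \operatorname{char}(k)$ forces every degree-$r$ extension $k(\gamma)/k$ to be separable. Your explicit justification that $\gamma \notin k^*$ (via Corollary~\ref{cor: sel-int}) and hence $[k(\gamma):k] = r$ simply spells out what the paper already uses inside the proof of the preceding lemma.
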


\begin{remark}\label{rem: ind-k} 
Given $\kappa \in \cK(\fn)$, the map from $\cE(\fn,g)$ to $\cE(\fn,g\kappa)$ sending $(\gamma,b)$ to $(\gamma,b\kappa)$ for every $(\gamma,b) \in \cE(\fn,g)$ is bijective.
Thus we have the following induced bijection
\[
D^*\backslash \cE(\fn,g) /\cK(\fn,g) \stackrel{\sim}{\longrightarrow} D^*\backslash \cE(\fn,g\kappa)/\cK(\fn,g\kappa).
\]
\end{remark}

\subsection[Self-intersection of X(n)]{Self-intersection of $\bX(\fn)$}\label{sec: S-I}
${}$
\indent When $\bX(\fn) = \bX(\fn,g)$ (i.e.~$g \in k^* \cdot \cK(\fn)$ by Corollary~\ref{cor: sel-int}), the self-intersection number of $\bX(\fn)$ in $X(\fn)\times X(\fn)$ is equal to the {\it Euler--Poincar\'e characteristic of $X(\fn)$}, denoted by $\chi(X(\fn))$, which is the degree of the top Chern class of the tangent bundle of $X(\fn)$ (see \cite[Example 8.1.12]{Ful}).
Recall the following identification
\[
X(\fn)(\bC_\infty) \cong D^* \backslash \Omega_r \times D^*(\bA_f) / \cK(\fn)
\cong \coprod_{[b] \in D^* \backslash D^*(\bA_f) / \cK(\fn)} \Gamma_b(\fn) \backslash \Omega_r,
\]
where $\Gamma_b(\fn) := D^* \cap b \cK(\fn) b^{-1}$ for every $b \in D^*(\bA_f)$.
Identifying $D_\infty = D\otimes_k k_\infty$ with $\bM_r(k_\infty)$, we may regard $\Gamma_b$ as a discrete and co-compact torsion free subgroup of $\text{GL}_r(k_\infty)$.
Therefore by Kurihara's analogue of Hirzebruch proportionality (see \cite[Theorem 2.2.8]{Ku}), we obtain:
\begin{align}\label{eqn: E-P-C}
\chi\big(X(\fn)\big) &= \sum_{[b] \in D^* \backslash D^*(\bA_f) / \cK(\fn)} \mu_\infty\big(\Gamma_b(\fn) \backslash \text{GL}_r(k_\infty)/k_\infty^*\big).
\end{align}
Here $\mu_\infty$ is the {\it Euler--Poincar\'e measure} on $\text{PGL}_r(k_\infty)$ introduced by Serre (see~\cite{Serre}), i.e. 
\[
\mu_\infty\big(\text{PGL}_r(O_\infty)\big) = \prod_{i=1}^{r-1}(1-q_\infty^i),
\]
and $q_{\infty}$ is the cardinality of the residue field at $\infty$.
Extending $\mu_\infty$ to the Haar measure $\mu_\bA$ on $D^*(\bA)/k_\infty^* \cong D^*(\bA_f)\times \text{PGL}_r(k_\infty)$ satisfying that 
\[
\mu_\bA\big(\cK\times\text{PGL}_r(O_\infty)\big) = \mu_\infty\big(\text{PGL}_r(O_\infty)\big)=\prod_{i=1}^{r-1}(1-q_\infty^i),
\]
the equation~\eqref{eqn: E-P-C} leads to the following Gauss--Bonnet-type formula:

\begin{proposition}\label{prop: G-B-formula}
\[
\chi\big(X(\fn)\big)
= [\cK:  \cK(\fn)] 
\cdot \mu_\bA\big(D^*\backslash D^*(\bA)/k_\infty^*\big).
\]
\end{proposition}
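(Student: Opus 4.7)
The plan is to start from formula \eqref{eqn: E-P-C} and match both sides after unfolding the adelic volume. The key idea is that both quantities admit parallel decompositions indexed by the finite double-coset space $D^*\backslash D^*(\bA_f)/\cK$, with the relation between levels $1$ and $\fn$ controlled solely by subgroup indices of the arithmetic groups $\Gamma_b(\fn) \subset \Gamma_b(1)$.

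First I would compute $\mu_\bA\big(D^*\backslash D^*(\bA)/k_\infty^*\big)$ directly using the product structure $\mu_\bA = \mu_f\otimes \mu_\infty$ with the normalization $\mu_f(\cK) = 1$ built into the paper's choice. Decomposing $D^*(\bA_f) = \coprod_{[b]} D^* b \cK$, the set $b\cK \times \mathrm{GL}_r(k_\infty)/k_\infty^*$ admits a fundamental domain of the form $b\cK\times F_b$ for the diagonal action of $\Gamma_b(1)$, where $F_b$ is any fundamental domain for $\Gamma_b(1)$ on $\mathrm{GL}_r(k_\infty)/k_\infty^*$; indeed, on the finite side the $\Gamma_b(1)$-action preserves $b\cK$ tautologically by definition of $\Gamma_b(1)$. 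Integrating with $\mu_f(b\cK)=1$ against Fubini yields
\[
\mu_\bA\big(D^*\backslash D^*(\bA)/k_\infty^*\big)
=\sum_{[b]\in D^*\backslash D^*(\bA_f)/\cK}\mu_\infty\big(\Gamma_b(1)\backslash \mathrm{GL}_r(k_\infty)/k_\infty^*\big).
\]

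Next I would refine the double-coset space from level $\cK$ to level $\cK(\fn)$. Since $\cK(\fn)$ is normal in $\cK$ (directly from $a\equiv 1\bmod \fn$ being preserved by $\cK$-conjugation), for any $\kappa \in \cK$ one has $b\kappa \cK(\fn)(b\kappa)^{-1} = b\cK(\fn)b^{-1}$, and hence $\Gamma_{b\kappa}(\fn) = \Gamma_b(\fn)$. The $b^{-1}\Gamma_b(1)b$-action on $\cK/\cK(\fn)$ by left translation then has stabilizer $b^{-1}\Gamma_b(\fn)b$ at every point, so each orbit has size $[\Gamma_b(1):\Gamma_b(\fn)]$, and the number of level-$\fn$ double cosets inside $D^*b\cK$ equals $[\cK:\cK(\fn)]/[\Gamma_b(1):\Gamma_b(\fn)]$. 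Substituting into \eqref{eqn: E-P-C}:
\[
\chi\big(X(\fn)\big)
= \sum_{[b]}\frac{[\cK:\cK(\fn)]}{[\Gamma_b(1):\Gamma_b(\fn)]}\cdot \mu_\infty\big(\Gamma_b(\fn)\backslash \mathrm{GL}_r(k_\infty)/k_\infty^*\big).
\]

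Finally, since $\Gamma_b(\fn)\backslash \mathrm{PGL}_r(k_\infty)\to \Gamma_b(1)\backslash\mathrm{PGL}_r(k_\infty)$ is a covering of degree $[\Gamma_b(1):\Gamma_b(\fn)]$, the ratio of $\mu_\infty$-volumes is exactly this index, so the indices cancel and the first step gives the desired identity. The main technical obstacle is to get the adelic normalization right: one must check that the Fubini decomposition in the first step is genuinely compatible with the convention $\mu_\bA(\cK \times \mathrm{PGL}_r(O_\infty)) = \prod_{i=1}^{r-1}(1-q_\infty^i)$, and that no hidden torsion in $\Gamma_b(1)$ (fixed points on $\mathrm{PGL}_r(k_\infty)$) spoils the fundamental-domain argument; this is handled by working with Serre's Euler--Poincar\'e measure, which is designed to make volumes additive across such orbifold strata and to satisfy the multiplicative behavior under finite covers used in the last step.
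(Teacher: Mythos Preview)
Your argument is correct and follows the same adelic-unfolding idea as the paper, with one route difference worth flagging. The paper deduces the proposition from \eqref{eqn: E-P-C} by unfolding $\mu_\bA$ directly at level $\cK(\fn)$: since $\Gamma_b(\fn)$ is torsion-free and meets $k_\infty^*$ trivially, $b\cK(\fn)\times F_b$ is an honest fundamental domain, and one obtains
\[
\mu_\bA\big(D^*\backslash D^*(\bA)/k_\infty^*\big)=\mu_f(\cK(\fn))\!\!\sum_{[b]\in D^*\backslash D^*(\bA_f)/\cK(\fn)}\!\!\mu_\infty\big(\Gamma_b(\fn)\backslash \GL_r(k_\infty)/k_\infty^*\big)=[\cK:\cK(\fn)]^{-1}\chi\big(X(\fn)\big)
\]
in one step. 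You instead unfold at level $\cK$ and then pass to level $\fn$ by coset-counting. This works, but your caveat about torsion is not hypothetical: $\bF_q^*\subset\Gamma_b(1)$ acts trivially on $\GL_r(k_\infty)/k_\infty^*$, so your ``fundamental domain'' $b\cK\times F_b$ meets each $\Gamma_b(1)$-orbit $q-1$ times, and the map $\Gamma_b(\fn)\backslash(\GL_r(k_\infty)/k_\infty^*)\to\Gamma_b(1)\backslash(\GL_r(k_\infty)/k_\infty^*)$ has generic fibers of size $[\Gamma_b(1):\Gamma_b(\fn)]/(q-1)$ rather than $[\Gamma_b(1):\Gamma_b(\fn)]$. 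These two factors of $q-1$ cancel, so your conclusion is correct, and invoking Serre's Euler--Poincar\'e convention is precisely the way to make both steps rigorous simultaneously. Your longer route does have the advantage of yielding the level-$1$ reformulation of Remark~\ref{rem: G-B-T}(1) along the way.
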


\begin{remark}\label{rem: G-B-T}
(1) We may rewrite the above formula as the following form:
\[
\chi\big(X(\fn)\big)
= [\cK:  \cK(\fn)] \cdot \sum_{[b] \in D^*\backslash D^*(\bA_f)/\cK} \mu_\infty\big(\Gamma_b \backslash \text{GL}_r(k_\infty)/k_\infty^*\big),
\]
where
$\Gamma_b:= D^*\cap b\cK b^{-1}$ for every $b \in D^*(\bA_f)$.

(2) Let 
$\mu_{\bA,t}$ be the {\it Tamagawa measure on $D^*(\bA)/\bA^*$} introduced in \cite[\S 3.2]{Weil}.
It is known that (see \cite[ Theorem~3.2.1]{Weil})
\[
\mu_{\bA,t}\big(D^*\backslash D^*(\bA)/\bA^*\big) = r,
\]
and (cf.\ \cite[\S 3.1, p.~32]{Weil} and \cite[\S 5]{W-Y}) 
\[
\mu_{\bA,t}\big(\cK/(\cO^{\infty})^* \times \text{PGL}_r(\cO_{\infty})\big) = \prod_{i=1}^{r-1}\zeta_k(-i)^{-1} \cdot \prod_{v \in {\bf Ram}} \prod_{i=1}^{r-1}\frac{1}{1-q_v^i},
\]
where $q_v$ is the cardinality of the residue field of $v$ for each $v \in |C|$, and $\zeta_k(s)$ is the zeta function of $k$, i.e.
\[
\zeta_k(s) = \prod_{v \in |C|}(1-q_v^{-s})^{-1}, \quad  {\rm Re}(s)>1.
\]
Therefore we get
\begin{eqnarray}\label{eqn: Total-volume}
&&\mu_\bA\big(D^*\backslash D^*(\bA)/k_\infty^*\big) \nonumber \\
&=&
\frac{\#\text{Pic}(A)}{q-1} \cdot \mu_\bA\big(D^*\backslash D^*(\bA)/\bA^*\big) \nonumber \\
&=&
\frac{\#\text{Pic}(A)}{q-1} \cdot \mu_{\bA,t}\big(D^*\backslash D^*(\bA)/\bA^*\big)\cdot
\frac{\mu_\bA\big(\cK/(\cO^{\infty})^* \times \text{PGL}_r(\cO_{\infty})\big)}{\mu_{\bA,t}\big(\cK/(\cO^{\infty})^* \times \text{PGL}_r(\cO_{\infty})\big)} \nonumber \\
&=& 
\frac{\#\text{Pic}(A)}{q-1} \cdot r \cdot 
\prod_{i=1}^{r-1}\zeta_k(-i) \cdot \prod_{v \in {\bf Ram}\cup\{\infty\}} \, \prod_{i=1}^{r-1}(1-q_v^i).
\end{eqnarray}
Consequently, Proposition~\ref{prop: G-B-formula} implies the following formula.

\begin{proposition}\label{prop: self-int}
The self-intersection number of $\bX(\fn)$ in $X(\fn)\times X(\fn)$ is equal to
\[
\chi\big(X(\fn)\big)
=[\cK:  \cK(\fn)] \cdot r \cdot \frac{\#\text{\rm Pic}(A)}{q-1} \cdot \prod_{i=1}^{r-1} \zeta_k(-i)\cdot \prod_{v \in {\bf Ram}\cup\{\infty\}} \prod_{i=1}^{r-1}(1-q_v^i).
\]
\end{proposition}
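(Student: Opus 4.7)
My plan is to combine Proposition~\ref{prop: G-B-formula} with the explicit evaluation of $\mu_\bA\big(D^*\backslash D^*(\bA)/k_\infty^*\big)$ recorded in equation~\eqref{eqn: Total-volume} of Remark~\ref{rem: G-B-T}. Since Proposition~\ref{prop: G-B-formula} already identifies
\[
\chi\big(X(\fn)\big) = [\cK:\cK(\fn)]\cdot \mu_\bA\big(D^*\backslash D^*(\fn)/k_\infty^*\big),
\]
the proof reduces to substituting the explicit volume from \eqref{eqn: Total-volume} and rearranging. Thus all the substance lies in justifying that chain of equalities.

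I would proceed in three steps. First, convert from the quotient by $k_\infty^*$ to the quotient by the full id\`ele group $\bA^*$: using the identification $\bA^*/k^*\cdot k_\infty^*\cdot (\cO^\infty)^* \cong \text{Pic}(A)$ together with $k^*\cap \big(k_\infty^*\cdot (\cO^\infty)^*\big)=\bF_q^*$, one obtains the prefactor $\#\text{Pic}(A)/(q-1)$ relating $\mu_\bA\big(D^*\backslash D^*(\bA)/k_\infty^*\big)$ to $\mu_\bA\big(D^*\backslash D^*(\bA)/\bA^*\big)$. Second, compare the Haar measure $\mu_\bA$, normalized via Serre's Euler--Poincar\'e measure at $\infty$, with the Tamagawa measure $\mu_{\bA,t}$ on $D^*(\bA)/\bA^*$; evaluating both on the compact set $\cK/(\cO^\infty)^*\times \text{PGL}_r(\cO_\infty)$ yields the ratio $\prod_{i=1}^{r-1}\zeta_k(-i)\cdot \prod_{v \in {\bf Ram}\cup\{\infty\}}\prod_{i=1}^{r-1}(1-q_v^i)$. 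Third, invoke Weil's theorem that the Tamagawa number of the inner form $D^*$ of $\GL_r$ equals $r$, i.e.\ $\mu_{\bA,t}\big(D^*\backslash D^*(\bA)/\bA^*\big)=r$. Multiplying these three contributions yields the closed form for $\mu_\bA\big(D^*\backslash D^*(\bA)/k_\infty^*\big)$ appearing in \eqref{eqn: Total-volume}, and substitution into Proposition~\ref{prop: G-B-formula} gives the stated formula.

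The main technical obstacle is the local measure comparison at ramified places in the second step: at each $v\in {\bf Ram}$ the group $(\cO_D)_v^*$ sits inside $D_v^*$, which is the unit group of a division algebra over $k_v$ rather than of $\GL_r(k_v)$, so its Tamagawa volume involves the factor $\prod_{i=1}^{r-1}(1-q_v^i)^{-1}$ instead of the unramified Euler factor $\prod_{i=1}^{r-1}(1-q_v^{-i})^{-1}$. Tracking this carefully---together with the analogous modification at $\infty$, where Serre's normalization already inserts $\prod_{i=1}^{r-1}(1-q_\infty^i)$ rather than an inverse---is what produces the extra product over ${\bf Ram}\cup\{\infty\}$ in the final formula. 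Once these local volume comparisons are verified, the proposition follows directly.
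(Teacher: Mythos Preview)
Your proposal is correct and follows essentially the same route as the paper: Proposition~\ref{prop: self-int} is deduced directly from Proposition~\ref{prop: G-B-formula} together with the volume computation in Remark~\ref{rem: G-B-T}(2), which proceeds exactly through your three steps (Picard-group prefactor, comparison of $\mu_\bA$ with the Tamagawa measure on the standard compact, and Weil's Tamagawa number $r$). Note the typo in your first display: $D^*(\fn)$ should read $D^*(\bA)$.
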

\end{remark}

\begin{remark}
When $A$ is a polynomial ring over $\bF_q$, the formula for $\chi\big(X(\fn)\big)$ appears in \cite[equation (6.10)]{Pap09}.
\end{remark}

\subsection{Projection to the full-level case}

Now, we assume $r$ to be a prime number distinct from the characteristic of $k$.
Let $\fa$ be a non-zero ideal of $A$.
Set
\[
\cE_\fa := \left\{
(\gamma,b) \in D^* \times D^*(\bA_f)\ \bigg|\ 
\begin{tabular}{cc}
\text{$k(\gamma)$ is imaginary over $k$,}\\
\text{$b^{-1}\gamma b \in \cD^\infty$
and  $\text{Nr}(\gamma) \cdot A = \fa$}
\end{tabular}
\right\}
= \cE_\fa^{\times} \stackrel{\cdot}{\cup}
\cE_\fa^{\circ}
\]
where
\begin{eqnarray*}
\cE_\fa^{\times} &:=& \left\{
(\gamma,b) \in D^* \times D^*(\bA_f)\ \bigg|\ 
\begin{tabular}{cc}
\text{$k(\gamma)/k$ is imaginary of degree $r$,}\\
\text{$b^{-1}\gamma b \in \cD^\infty$
and  $\text{Nr}(\gamma) \cdot A = \fa$}
\end{tabular}
\right\} \\
\text{and}\quad \quad  
\cE_\fa^{\circ}
&:=&
\big\{(\gamma,b) \in D^* \times D^*(\bA_f) \ \big|\
\gamma \in A \text{ and } \gamma^r \cdot A = \fa\big\}.
\end{eqnarray*}

Then 
$\cE_\fa$ (as well as $\cE_\fa^\times$ and $\cE_\fa^\circ$)
is equipped with a left action of $D^*$ and a right action of $\cK$ defined by
\[
\gamma_0 \cdot (\gamma,b) \cdot \kappa := (\gamma_0 \gamma \gamma_0^{-1}, \gamma_0 b \kappa ), \quad \forall \gamma_0 \in D^*, (\gamma,b) \in \cE_\fa, \kappa \in \cK.
\]
Choose a nonzero ideal $\fn$ of $A$ so that $v \nmid \fn$ for every $v \in {\bf Ram}$ and ${\rm ord}_{v}(\fa)\leq {\rm ord}_{v}(\fn)$ for every $v \notin {\bf Ram}\cup \{\infty\}$.
For each $g \in \cD^\infty \cap D^*(\bA_f)$ with $\text{Nr}(g)\cdot \cO^{\infty}\cap k = \fa$, one has $\cE(\fn,g)
\subset \cE_\fa$.
Put
\[
\cE^\times(\fn,g):= \cE(\fn,g)\cap \cE_\fa^\times
\quad \text{ and } \quad 
\cE^\circ(\fn,g):=
\cE(\fn,g)\cap \cE_\fa^\circ.
\]
We point out that 
either $\cE^\times(\fn,g)$ or $\cE^\circ(\fn,g)$ is empty, and
$\cE^\circ(\fn,g)$ is non-empty if and only if $g \in k^*\cdot \cK(\fn)$.
Moreover, let $\fa'$ be the ``prime-to-{\bf Ram}'' part of $\fa$, i.e.\ $\fa'$ is the ideal of $A$ so that
\[
{\rm ord}_v(\fa') = 
\begin{cases}
{\rm ord_v}(\fa), & \text{ if $v \notin {\bf Ram}\cup \{\infty\}$;} \\
0, & \text{ otherwise.}
\end{cases}
\]
the following lemma holds.

\begin{lemma}\label{lem: E-decomp}
Taking $\star \in \{\times,\circ\}$,
we may decompose $\cE_\fa^\star$ into
\begin{equation}\label{eqn: Ea-decomp}
\cE_\fa^\star = \coprod_{\subfrac{\bar{g} \in \cK(\fn)\backslash (\cD^\infty \cap D^*(\bA_f))/\cK(\fn)}{\text{\rm Nr}(g)\cdot \cO^{\infty}\cap k = \fa}}\ \ 
\coprod_{\bar{\kappa} \in \cK(\fn,g)\backslash \cK(\fn)} \cE^\star(\fn,g\kappa).
\end{equation}
Consequently, we have the bijection:
\[
\coprod_{\subfrac{\bar{g} \in \cK(\fn)\backslash (\cD^\infty \cap D^*(\bA_f))/\cK(\fn)}{\text{\rm Nr}(g)\cdot \cO^{\infty}\cap k = \fa}}\ \ 
\coprod_{\bar{\kappa} \in \cK(\fn,g)\backslash \cK(\fn)} D^*\backslash \cE^\star(\fn,g\kappa)/\cK(\fn\fa') \stackrel{\sim}{\longrightarrow} D^*\backslash \cE_\fa^\star /\cK(\fn\fa').
\]
\end{lemma}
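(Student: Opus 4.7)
The approach is to first verify the set-theoretic decomposition \eqref{eqn: Ea-decomp}, and then deduce the bijection by checking that both the left $D^*$-action and the right $\cK(\fn\fa')$-action preserve each summand of the decomposition. The technical heart of the argument is the containment
\[
g\,\cK(\fn\fa')\,g^{-1} \ \subseteq\ \cK(\fn), \qquad g^{-1}\,\cK(\fn\fa')\,g \ \subseteq\ \cK(\fn),
\]
valid for every $g \in \cD^\infty \cap D^*(\bA_f)$ with $\hbox{Nr}(g)\cO^\infty \cap k = \fa$. To verify this I work place by place: at $v \notin {\bf Ram}$ with $v \mid \fa$, the Smith normal form of $g_v \in \bM_r(\cO_v)$ shows $g_v\,\cD^\infty_v\,g_v^{-1} \subseteq (\fa')_v^{-1}\cD^\infty_v$, so for $\kappa' \equiv 1 \pmod{\fn\fa'}$ one gets $g\kappa' g^{-1} - 1 = g(\kappa'-1)g^{-1} \in \fn\fa'\cdot(\fa')^{-1}\cD^\infty = \fn\,\cD^\infty$; at $v \notin {\bf Ram}$ with $v \nmid \fa$, $g_v$ is a unit, so conjugation preserves $\cD^\infty_v$; at $v \in {\bf Ram}$, the hypothesis $v \nmid \fn$ gives $\cK(\fn)_v = \cD_v^*$, which is normalized by $D_v^*$.

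For the decomposition itself, given $(\gamma,b) \in \cE_\fa^\star$, set $h := b^{-1}\gamma b$, which lies in $\cD^\infty \cap D^*(\bA_f)$ with $\hbox{Nr}(h)\cO^\infty \cap k = \fa$. Then $h$ belongs to a unique double coset $\cK(\fn)\, g\,\cK(\fn)$ appearing in the outer index set; writing $h = \kappa_1 g\kappa$ with $\kappa_1,\kappa \in \cK(\fn)$, the class $\bar\kappa \in \cK(\fn,g)\backslash \cK(\fn)$ is well-defined, since two representatives $\kappa,\kappa'$ satisfy $\cK(\fn) g\kappa = \cK(\fn) g\kappa'$ iff $g\,\kappa(\kappa')^{-1}\,g^{-1} \in \cK(\fn)$, equivalently $\kappa(\kappa')^{-1} \in g^{-1}\cK(\fn)g \cap \cK(\fn) = \cK(\fn,g)$. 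Thus $(\gamma,b)$ lies in exactly one $\cE^\star(\fn, g\kappa)$, proving \eqref{eqn: Ea-decomp}; the separation $\cE_\fa^\star = \cE_\fa^\times \stackrel{\cdot}{\cup}\cE_\fa^\circ$ is clearly respected.

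For the bijection, I check that the two group actions respect the decomposition piece-by-piece. The left $D^*$-action preserves each $\cE^\star(\fn, g\kappa)$ because $(\gamma_0 b)^{-1}(\gamma_0 \gamma\gamma_0^{-1})(\gamma_0 b) = b^{-1}\gamma b$ for every $\gamma_0 \in D^*$. For the right action of $\kappa' \in \cK(\fn\fa')$, I compute
\[
(b\kappa')^{-1}\gamma(b\kappa') = \big((\kappa')^{-1}\kappa_1\big)\cdot \big(g\,\kappa\kappa'\kappa^{-1}\,g^{-1}\big) \cdot g\kappa,
\]
and use that $\cK(\fn\fa')$ is normal in $\cK$ (as the kernel of the reduction $\cK \to (\cD^\infty/\fn\fa'\cD^\infty)^*$), so $\kappa\kappa'\kappa^{-1} \in \cK(\fn\fa')$, whence the middle factor lies in $g\,\cK(\fn\fa')\,g^{-1} \subseteq \cK(\fn)$ by the key containment. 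This puts $(b\kappa')^{-1}\gamma(b\kappa') \in \cK(\fn) g\kappa$, so each piece is preserved. Passing to the double quotient gives the claimed bijection.

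\textbf{Main obstacle.} The crux is the key containment in the first paragraph: it is precisely the reason the auxiliary level is enlarged from $\fn$ to $\fn\fa'$, with the factor $\fa'$ absorbing the ``denominator at most $\fa'$'' introduced by conjugation against $g$. Without this enlargement the action of $\cK(\fn)$ would not descend to the individual summands $\cE^\star(\fn, g\kappa)$, and the remainder of the argument (which is essentially bookkeeping with cosets and applying the normality of $\cK(\fn\fa')$) would collapse.
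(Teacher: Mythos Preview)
Your argument for the decomposition \eqref{eqn: Ea-decomp} is essentially identical to the paper's: both identify $(\gamma,b)$ with the unique left coset $\cK(\fn)h$ containing $h=b^{-1}\gamma b$, and then regroup left cosets into double cosets via $\cK(\fn)g\cK(\fn)=\coprod_{\bar\kappa\in\cK(\fn,g)\backslash\cK(\fn)}\cK(\fn)g\kappa$. For the bijection, the paper simply asserts it as a consequence of the decomposition, whereas you supply the missing verification that both the $D^*$-action and the $\cK(\fn\fa')$-action preserve each summand $\cE^\star(\fn,g\kappa)$; your local check of the containment $g\,\cK(\fn\fa')\,g^{-1}\subseteq\cK(\fn)$ is correct and is exactly what justifies the passage to level $\fn\fa'$. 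One tiny omission: after showing $g\kappa'g^{-1}-1\in\fn\,\cD^\infty$ you should also note that $g\kappa'g^{-1}\in(\cD^\infty)^*$ (immediate from $g\kappa'g^{-1}\in 1+\fn\cD^\infty$ and $\text{Nr}(g\kappa'g^{-1})=\text{Nr}(\kappa')\in(\cO^\infty)^*$), so that it indeed lies in $\cK(\fn)$ and not merely in $1+\fn\cD^\infty$.
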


\begin{proof}
Given $(\gamma,b) \in \cE_\fa^\star$, 
we may take $g = b^{-1}\gamma b \in \cD^\infty \cap D^*(\bA_f)$ and get $(\gamma,b) \in \cE^\star (\fn,g)$.
Thus it remains to show that the right hand side of the equality \eqref{eqn: Ea-decomp} is indeed a disjoint union.

Given $g,g' \in \cD^\infty \cap D^*(\bA_f)$ with $\text{Nr}(g)\cdot \cO^{\infty} \cap k = \text{Nr}(g')\cdot \cO^{\infty}\cap k = \fa$,
suppose $\cE^\star(\fn,g)\cap \cE^\star(\fn,g')$ is non-empty, i.e.\ there exists $(\gamma,b) \in \cE^\star(\fn,g)\cap \cE^\star(\fn,g')$.
Then $b^{-1}\gamma b \in \cK(\fn)g \cap \cK(\fn)g'$, which implies that $\cK(\fn)g = \cK(\fn)g'$.
Hence
\[
\cE_\fa^{\star} = \coprod_{\subfrac{\bar{g} \in \cK(\fn)\backslash \cD^\infty \cap D^*(\bA_f)}{\text{Nr}(g)\cdot \cO^{\infty}\cap k = \fa}} \cE^{\star}(\fn,g),
\]
and the desired decomposition in \eqref{eqn: Ea-decomp} follows from:
\[
\cK(\fn)g \cK(\fn) = \coprod_{\bar{\kappa} \in \cK(\fn,g)\backslash \cK(\fn)} \cK(\fn)g\kappa, \quad \forall g \in \cD^\infty \cap D^*(\bA_f).
\]
\end{proof}

Next, we look closely into $D^*\backslash \cE_\fa^\star /\cK(\fn\fa')$ for $\star = \times$ or $\circ$.
We may decompose $D^*\backslash \cE_\fa^{\times} / \cK(\fn\fa')$ into 
\[
D^*\backslash \cE_\fa^{\times} / \cK(\fn\fa') = \hspace{-0.3cm} \coprod_{[\gamma,b] \in D^*\backslash \cE_\fa^{\star} / \cK} 
\big\{D^*(\gamma,b\kappa)\cK(\fn\fa')\ \big|\ \bar{\kappa} \in (b^{-1}C_D(k(\gamma))b \cap \cK)\backslash \cK/\cK(\fn\fa')\big\},
\]
where $C_D(k(\gamma))$ is the centralizer of $k(\gamma)$ in $D$.
Applying Eichler's theory of optimal embeddings of imaginary $A$-orders into $D$ in Appendix~\ref{sec: Eich}, we know that $D^*\backslash \cE_\fa^{\times}/\cK$ is finite (see Remark~\ref{rem: finiteness}).
Hence 
\[
\#\big(D^*\backslash \cE_\fa^{\times}/\cK(\fn\fa')\big) = \sum_{[\gamma,b] \in D^*\backslash \cE_\fa^{\times}/\cK} \frac{[\cK:\cK(\fn\fa')]}{\#(b^{-1}k(\gamma)^*b \cap \cK)}.
\]

Finally, by Lemma~\ref{lem: E-decomp}, we obtain that
\begin{eqnarray*}
&& \#\big(D^*\backslash \cE_\fa^{\times}/\cK(\fn\fa')\big) \\
& = & \sum_{\bar{g}} \sum_{\bar{\kappa} \in K(\fn,g)\backslash K(\fn)} \#\big(D^*\backslash \cE^{\times}(\fn,g\kappa)/\cK(\fn\fa')\big) \\
& = & \sum_{\bar{g}} \sum_{\bar{\kappa} \in \cK(\fn,g)\backslash K(\fn)} [\cK(\fn,g\kappa):\cK(\fn\fa')] \cdot  \#\big(D^*\backslash \cE^{\times}(\fn,g\kappa)/\cK(\fn,g\kappa)\big) \\
& = & [\cK(\fn):\cK(\fn\fa')] \cdot \sum_{\bar{g}} \#\big(D^*\backslash \cE^{\times}(\fn,g)/\cK(\fn,g)\big) \quad (\text{by Remark~\ref{rem: ind-k}}),
\end{eqnarray*}
where $\bar{g}$ runs through the double cosets in $\cK(\fn)\backslash (\cD^\infty \cap D^*(\bA_f))/\cK(\fn)$ with $\text{Nr}(g) \cdot \cO^{\infty}\cap k = \fa$.
Therefore 
\begin{eqnarray}\label{eqn: tech}
&& \frac{1}{[\cK:\cK(\fn)]} \cdot \sum_{\bar{g}}\#\big(D^*\backslash \cE^{\times}(\fn,g)/\cK(\fn,g)\big) \nonumber \\
&=& \frac{1}{[\cK:\cK(\fn\fa')]}\cdot \#\big(D^*\backslash \cE_\fa^{\times} /\cK(\fn\fa')\big) \nonumber \\
&=& \frac{1}{q-1}\cdot \sum_{[\gamma,b] \in D^*\backslash \cE_\fa^{\times}/\cK}\frac{q-1}{\#(b^{-1}k(\gamma)^* b \cap \cK)} \nonumber \\
&=& \frac{1}{q-1}\cdot \sum_{[\gamma,b] \in D^*\backslash \cE_\fa^{\times}/\cK}\frac{q-1}{\#(k(\gamma)^* \cap b\cK b^{-1})}.
\end{eqnarray}

Recall that $\cE^\circ(\fn,g)$ is non-empty if and only if $g \in k^*\cdot \cK(\fn)$, which is equivalent to $X(\fn)= X(\fn,g)$ by Corollary~\ref{cor: sel-int}.
For $\star \in \{\times,\circ \}$ and $(\gamma,b) \in \cE^\star_\fa$, we put
\[
i(\gamma,b) :=
\begin{cases}
    \displaystyle 
    \frac{q-1}{\#(k(\gamma)^*\cap b\cK b^{-1})}, & \text{ if $\star =\times$}, 
    \vspace{5pt} \\
\displaystyle{\frac{q-1}{r}}\cdot \mu_\infty(\Gamma_b \backslash \GL_r(k_\infty)/k_\infty^*), & \text{ if $\star = \circ $}.
\end{cases}
\]
By Corollary~\ref{cor: emb}, the equation~\eqref{eqn: E-P-C}, and the projection formula in Example~\ref{ex: proj-formula}, we obtain the following.

\begin{proposition}\label{prop: int-formula}
Suppose $r$ is a prime number different from the characteristic of $k$.
Let $\fa$ be a nonzero ideal of $A$.
We have that
\[
i(\cZ\cdot \cZ_\fa) = \frac{r}{q-1} \cdot \sum_{[\gamma,b] \in D^*\backslash \cE_\fa / \cK} i(\gamma,b).
\]

\end{proposition}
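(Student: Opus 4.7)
The plan is to apply the projection formula of Theorem~\ref{ex: proj-formula}, split the resulting sum into the self-intersection and transversal parts via Corollary~\ref{cor: sel-int}, and identify each part with the contribution coming from $\cE_\fa^\circ$ and $\cE_\fa^\times$ respectively.

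Fix a nonzero proper ideal $\fn$ of $A$ with $v \nmid \fn$ for every $v \in \mathbf{Ram}$ and $\mathrm{ord}_v(\fa) \leq \mathrm{ord}_v(\fn)$ for every $v \notin \mathbf{Ram}\cup\{\infty\}$. Theorem~\ref{ex: proj-formula} gives
\[
i(\cZ \cdot \cZ_\fa) = \frac{1}{[\cK:\cK(\fn)]} \sum_{\tilde g} i\bigl(\cZ(\fn)\cdot \cZ(\fn,g)\bigr),
\]
where $\tilde g$ ranges over $\cK(\fn)\backslash ((\cD^\infty)^* \cap D^*(\bA_f))/\cK(\fn)$ with $\mathrm{Nr}(g)\cO^\infty \cap k = \fa$. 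By Corollary~\ref{cor: sel-int} I split this sum according to whether $g \in k^*\cK(\fn)$ (the self-intersection case $\bX(\fn,g) = \bX(\fn)$) or not.

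For the transversal contribution, the corollary in Section~\ref{sec: tran-int} ensures $i(\cZ(\fn)\cdot \cZ(\fn,g))$ is the counting number of intersection points, and Corollary~\ref{cor: emb} identifies it with $r \cdot \#\bigl(D^*\backslash \cE(\fn,g)/\cK(\fn,g)\bigr)$. Feeding this into the formula and invoking the identity~\eqref{eqn: tech} (already established via the double-coset comparison built on Lemma~\ref{lem: E-decomp} and Eichler's optimal embedding theory) yields
\[
\frac{1}{[\cK:\cK(\fn)]}\sum_{\tilde g \notin k^*\cK(\fn)} i\bigl(\cZ(\fn)\cdot \cZ(\fn,g)\bigr) = \frac{r}{q-1}\sum_{[\gamma,b]\in D^*\backslash \cE_\fa^\times/\cK} i(\gamma,b).
\]

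For the self-intersection contribution, I first observe that $k^* \cap \cK(\fn) = \{1\}$: any element of $k^* \cap (\cD^\infty)^*$ lies in $A^* = \bF_q^*$, and the condition $a \equiv 1 \pmod{\fn}$ forces $a=1$ since $\fn$ is a proper ideal. Consequently the double cosets $\tilde g$ with $g \in k^*\cK(\fn)$ and $\mathrm{Nr}(g)\cO^\infty\cap k = \fa$ are in bijection with $\{\gamma \in A : \gamma^r A = \fa\}$ via $\tilde g \mapsto \gamma$ where $g = \gamma \kappa$ with $\kappa \in \cK(\fn)$. Each such $\tilde g$ contributes $\chi\bigl(X(\fn)\bigr)$, which by Proposition~\ref{prop: G-B-formula} (in the form of Remark~\ref{rem: G-B-T}(1)) equals $[\cK:\cK(\fn)] \cdot \sum_{[b]} \mu_\infty\bigl(\Gamma_b\backslash\GL_r(k_\infty)/k_\infty^*\bigr)$. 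Since the $D^*\times \cK$-action on $\cE_\fa^\circ$ fixes the (scalar) first coordinate, $D^*\backslash \cE_\fa^\circ/\cK$ is naturally identified with $\{\gamma \in A : \gamma^r A = \fa\} \times \bigl(D^*\backslash D^*(\bA_f)/\cK\bigr)$, so the self-intersection part collapses to
\[
\sum_{\gamma,\,[b]} \mu_\infty\bigl(\Gamma_b\backslash \GL_r(k_\infty)/k_\infty^*\bigr) = \frac{r}{q-1}\sum_{[\gamma,b]\in D^*\backslash \cE_\fa^\circ/\cK} i(\gamma,b),
\]
where in the last step I have used the definition of $i(\gamma,b)$ on $\cE_\fa^\circ$.

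Adding the two contributions yields the claim. The main point to verify carefully is that the bijection in the self-intersection case is free of hidden multiplicities and that all the combinatorial prefactors---$[\cK:\cK(\fn)]$, $\chi(X(\fn))$, the Euler--Poincar\'e measures, and the factor $(q-1)/r$---combine so that $\fn$ drops out of the final expression, leaving an honest invariant of $\fa$.
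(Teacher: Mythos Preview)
Your argument is correct and follows essentially the same route as the paper's own proof: apply Theorem~\ref{ex: proj-formula}, split via Corollary~\ref{cor: sel-int}, handle the transversal part through Corollary~\ref{cor: emb} and equation~\eqref{eqn: tech}, and handle the self-intersection part via Remark~\ref{rem: G-B-T}(1) together with the identification of $D^*\backslash \cE_\fa^\circ/\cK$ with $\{\gamma\in A:\gamma^rA=\fa\}\times(D^*\backslash D^*(\bA_f)/\cK)$. Your explicit verification that $k^*\cap\cK(\fn)=\{1\}$ (forcing the bijection between self-intersection cosets and $\{\gamma\in A:\gamma^rA=\fa\}$) is a detail the paper leaves implicit, so your write-up is in fact slightly more careful on that point.
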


\begin{proof}
By Theorem~\ref{ex: proj-formula}, we have that
\begin{eqnarray*}
i(\cZ\cdot \cZ_\fa) &=& \frac{1}{[\cK: \cK(\fn)]} \cdot \sum_{\subfrac{g \in \cK(\fn)\backslash \cD^\infty \cap D^*(\bA_f)/\cK(\fn)}{\text{Nr}(g)\cO^{\infty}\cap k = \fa}}i(\cZ(\fn) \cdot \cZ(\fn,g)) \\
&=& \frac{1}{[\cK: \cK(\fn)]} \cdot \sum_{\subfrac{g \in \cK(\fn)\backslash \cD^\infty \cap D^*(\bA_f)/\cK(\fn)}{g \notin k^*\cK(\fn),\ \text{Nr}(g)\cO^{\infty}\cap k = \fa}}i(\cZ(\fn) \cdot \cZ(\fn,g)) \\
&&\hspace{-0.27cm}+\frac{1}{[\cK: \cK(\fn)]} \cdot 
\sum_{\subfrac{\gamma \in A}{\gamma^r A = \fa}} \chi\big(X(\fn)\big).
\end{eqnarray*}
By Corollary~\ref{cor: emb} and Remark~\ref{rem: G-B-T}~(1),
the right-hand-side of the above equality can be written as
\begin{eqnarray*}
&& \frac{r}{[\cK: \cK(\fn)]} \cdot \sum_{\subfrac{g \in \cK(\fn)\backslash \cD^\infty \cap D^*(\bA_f)/\cK(\fn)}{g \notin k^*\cK(\fn),\ \text{Nr}(g)\cO^{\infty}\cap k = \fa}}
\#\big(D^*\backslash \cE^{\times}(\fn,g)/\cK(\fn,g)\big)\\
&+&  
\sum_{\subfrac{\gamma \in A}{\gamma^r A = \fa}} 
\sum_{b \in D^*\backslash D^*(\bA_f)/\cK}
\mu_\infty\big(\Gamma_b\backslash \text{GL}_r(k_\infty)/k_\infty^*\big).
\end{eqnarray*}
Note that $D^*\backslash \cE_\fa^\circ/\cK$ is in bijection with
$\{\gamma \in A \mid \gamma^r A = \fa\} \times D^*\backslash D^*(\bA_f)/\cK$.
 Therefore by the equation~\eqref{eqn: tech}, we obtain that
\begin{eqnarray*}
i(\cZ\cdot \cZ_\fa)
&=& \frac{r}{q-1}\cdot \sum_{(\gamma,b) \in D^*\backslash \cE_\fa^{\times}/\cK} i(\gamma,b) 
+  \frac{r}{q-1}\cdot  \sum_{(\gamma,b) \in D^*\backslash \cE_\fa^{\circ}/\cK} i(\gamma,b) \\
&=& \frac{r}{q-1}\cdot 
\sum_{(\gamma,b) \in D^*\backslash \cE_\fa/\cK}  i(\gamma,b).
\end{eqnarray*}
\end{proof}

In the next section, we shall express the right hand side of the above equality in terms of class numbers of imaginary fields, and derive an Kronecker-Hurwitz-type class number relation.

\section{Class number relation}\label{sec: CR}

Keep the notation as in the last section.
Suppose $\cE_\fa$ is non-empty, which implies that
$\fa$ is a principal ideal of $A$.
In this case, for each $(\gamma,b) \in \cE_\fa$,
the condition $b^{-1}\gamma b \in \cD^\infty$ is equivalent to
\[
\gamma \in b\cD^\infty b^{-1} \cap D =:O_{D,b},
\]
where $O_{D,b}$ is a maximal $A$-order of $D$.
This implies that $\gamma$ is integral over $A$.
Suppose $(\gamma,b) \in \cE_\fa^\times$.
Let 
\[
f_\gamma(x) = x^r-c_1 x^{r-1}+\cdots + (-1)^{r-1}c_{r-1}x+(-1)^r c_r \in A[x]
\]
be the minimal polynomial of $\gamma$ over $k$.
Since $k(\gamma)$ is imaginary over $k$ of degree $r$, the polynomial $f_\gamma(x)$ remains irreducible over $k_\infty$.
Hence every root of $f_\gamma(x)$ in $\bC_\infty$ has the same absolute value.
Notice that $c_r \cdot A = \text{Nr}(\gamma)\cdot A = \fa$, which gives us that $|c_i|_\infty \leq q^{\deg \fa}$ for $i=1,...,r$.
Therefore the possibility of the minimal polynomial $f_\gamma \in A[x]$ is finite for $(\gamma,b)$ running through all pairs in $\cE_\fa^\times$.

On the other hand, given $\vec{c}:=(c_1,...,c_r) \in A^r$,
let 
\[
f_{\vec{c}}(x):= x^r-c_1x^{r-1}+\cdots + (-1)^{r-1}c_{r-1}x+(-1)^r c_r \quad \in A[x].
\]
Suppose that:
\begin{itemize}
\item[(i)] $c_r\cdot A = \fa$;
\item[(ii)]
$K_{\vec{c}}:= k[x]/(f_{\vec{c}}(x))$
is an imaginary field extension over $k$;
\item[(iii)]
there exists a $k$-algebra embedding $\phi: K_{\vec{c}}\hookrightarrow D$.
\end{itemize}
Put
$R_{\vec{c}}:= A[x]/(f_{\vec{c}}(x))$,
which is an $A$-order in $K_{\vec{c}}$, and $\bar{x}$ is the coset in $R_{\vec{c}}$ represented by $x$.
For each $b \in D^*(\bA_f)$ satisfying that
\[
R_{\vec{c}} \subset \phi^{-1}(O_{D,b}),
\]
set $\gamma_\phi := \phi(\bar{x}) \in O_{D,b}$.
Then $(\gamma_\phi,b)$ lies in $\cE_\fa^\times$.

Let
\[
\cE(\vec{c}):=
\{b \in D^*(\bA_f) \mid R_{\vec{c}}\subset \phi^{-1}(O_{D,b})\},
\]
which is invariant by left multiplication of $\phi(K_{\vec{c}}^*)$ and right multiplication of $\cK$.
For each $(\gamma,b) \in \cE_\fa$ with $f_\gamma = f_{\vec{c}}$,
by the Noether-Skolem theorem, there exists $\gamma_0 \in D^*$, which is unique up to left mulltiplication of $\phi(K_{\vec{c}}^*)$, such that $\gamma_\phi = \gamma_0 \gamma \gamma_0^{-1}$.
Then the condition $b^{-1}\gamma b \in \cD^\infty$
is equivalent to $\gamma_\phi \in O_{D,\gamma_0 b}$.
Hence we have a well-defined bijective map
\begin{equation}\label{eqn: fin}
D^*\backslash \cE_\fa^\times /\cK \longrightarrow \coprod_{\subfrac{\vec{c} \in A^r:\  c_r A = \fa,}{K_{\vec{c}}/k \text{ is imaginary}}} \phi(K_{\vec{c}}^*)\backslash \cE(\vec{c}) /\cK 
\end{equation}
which sends $[\gamma,b]$ to $[b]_{\vec{c}} \in \phi(K_{\vec{c}}^*)\backslash \cE(\vec{c}) /\cK$
when $f_\gamma = f_{\vec{c}}$.

Now, for an $A$-order $R$ of $K_{\vec{c}}$ with $R_{\vec{c}}\subset R$,
put
\[
\cE^o_\phi(R):=\{b \in D^*(\bA_f)\mid \phi^{-1}(O_{D,b}) = R\}.
\]
Then
\begin{equation}\label{eqn: fin-3}
\phi(K_{\vec{c}}^*)\backslash \cE(\vec{c})/\cK = \coprod_{R: R_{\vec{c}}\subset R} 
\phi(K_{\vec{c}}^*)\backslash \cE^o_\phi(R)/\cK.
\end{equation}
Note that for $b \in \cE^o_\phi(R)$, one has that
\[
k(\gamma_{\phi})^* \cap b\cK b^{-1}
= \phi(K_{\vec{c}})^* \cap O_{D,b}^* = \phi(R^*).
\]
Therefore by the equation~\eqref{eqn: tech} and Proposition~\ref{prop: OE-HC},
we obtain that 
\begin{eqnarray*}
\sum_{[\gamma,b] \in D^*\backslash \cE_\fa^\times/\cK} \frac{\#(\bF_q^\times)}{\#(k(\gamma)^*\cap b\cK b^{-1})}
&=& \sum_{\subfrac{\vec{c} \in A^r:\  c_r A = \fa,}{K_{\vec{c}} \text{ is imag.}}} 
\sum_{R: R_{\vec{c}}\subset R}
\frac{\#\big(\phi(K_{\vec{c}}^*)\backslash \cE^o_\phi(R)/\cK\big)}{\#(R^*/\bF_q^*)} \\
&=& \sum_{\subfrac{\vec{c} \in A^r:\  c_r A = \fa,}{K_{\vec{c}} \text{ is imag}}} H^D(\vec{c}),
\end{eqnarray*}
where $H^D(\vec{c})$ is the \textit{modified Hurwitz class number} with respect to $D$ introduced in Definition~\ref{def: MHC}.

Finally,
set
\begin{equation}\label{eqn: H(c)}
H^D(0):=
\#\text{Pic}(A)\cdot 
\prod_{i=1}^{r-1}\zeta_k(-i) \cdot \prod_{v \in {\bf Ram}\cup\{\infty\}} \, \prod_{i=1}^{r-1}(1-q_v^i).
\end{equation}
As $D^*\backslash \cE_\fa^\circ/\cK$ is in bijection with $\{c \in A\mid c^r \cdot A = \fa\} \times D^*\backslash D^*(\bA_f)/\cK$, which is finite,
by Proposition~\ref{prop: self-int} and Remark~\ref{rem: G-B-T}~(1) one gets
\[
\sum_{(\gamma,b) \in D^*\backslash \cE_\fa^\circ/\cK} i(\gamma,b) = \sum_{\subfrac{c \in A}{c^r A = \fa}} H^D(0).
\]
Therefore Proposition~\ref{prop: int-formula} leads to the following class number relation:

\begin{theorem}\label{thm: C-R}
Let $r$ be a prime number distinct from the characteristic of $k$. 
For each nonzero ideal $\fa$ of $A$, we have
\[
i(\cZ\cdot \cZ_\fa) = \frac{r}{q-1}\cdot \left(\sum_{\subfrac{\vec{c} \in A^r:\  c_r A = \fa,}{K_{\vec{c}} \text{\rm\ is imag.}}} H^D(\vec{c})
+\sum_{\subfrac{c \in A}{c^r \cdot A = \fa}}H^D(0)\right).
\]    
\end{theorem}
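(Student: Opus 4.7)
The plan is to build upon Proposition~\ref{prop: int-formula}, which already reduces $i(\cZ\cdot \cZ_{\fa})$ to $\frac{r}{q-1}\cdot \sum_{[\gamma,b] \in D^*\backslash \cE_\fa/\cK} i(\gamma,b)$, and to process the contributions from $\cE_\fa^\times$ and $\cE_\fa^\circ$ separately, recognizing each as a class number quantity. The core observation is that the first sum in the statement of the theorem is indexed naturally by the characteristic polynomials of $\gamma$, so one parameterizes $\cE_\fa^\times$ by data $\vec{c}=(c_1,\dots,c_r)$ extracted from $f_\gamma$; the second sum (the ``diagonal'' contribution) corresponds to $\cE_\fa^\circ$, where $\gamma \in A$ satisfies $\gamma^r \cdot A = \fa$.

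For the $\cE_\fa^\times$ piece, I would argue as the paragraph preceding \eqref{eqn: fin} outlines: for $(\gamma,b) \in \cE_\fa^\times$, the minimal polynomial $f_\gamma(x) \in A[x]$ is a monic irreducible of degree $r$ with constant term $(-1)^r c_r$ satisfying $c_r A = \text{Nr}(\gamma)\cdot A=\fa$; the Archimedean bound coming from $K_{\vec{c}}$ being imaginary (so $f_\gamma$ is irreducible over $k_\infty$ and all roots have the same absolute value) makes the set of admissible $\vec{c}$ finite. Fixing a $k$-embedding $\phi: K_{\vec{c}} \hookrightarrow D$ and invoking the Noether-Skolem theorem then identifies $D^*\backslash\cE_\fa^\times/\cK$ with $\coprod_{\vec{c}} \phi(K_{\vec{c}}^*)\backslash \cE(\vec{c})/\cK$ as in \eqref{eqn: fin}. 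Stratifying $\cE(\vec{c})$ by the $A$-order $R = \phi^{-1}(O_{D,b})\supseteq R_{\vec{c}}$ as in \eqref{eqn: fin-3}, and using $k(\gamma_\phi)^* \cap b\cK b^{-1} = \phi(R^*)$, one rewrites the $\cE_\fa^\times$-portion of Proposition~\ref{prop: int-formula} as a double sum over $\vec{c}$ and over $R$; the inner sum over $R$ is precisely the modified Hurwitz class number $H^D(\vec{c})$ as packaged in Definition~\ref{def: MHC} via Eichler's optimal-embedding theory (Proposition~\ref{prop: OE-HC} in Appendix~\ref{sec: Eich}).

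For the $\cE_\fa^\circ$ piece, every element has $\gamma \in A$ with $\gamma^r A = \fa$, and the natural bijection $D^*\backslash \cE_\fa^\circ/\cK \cong \{c \in A: c^r A = \fa\} \times D^*\backslash D^*(\bA_f)/\cK$ reduces the count to a fixed multiple of $\sum_{[b]} \mu_\infty(\Gamma_b \backslash \GL_r(k_\infty)/k_\infty^*)$. Using the definition of $i(\gamma,b) = \frac{q-1}{r}\cdot \mu_\infty(\Gamma_b\backslash \GL_r(k_\infty)/k_\infty^*)$, together with Remark~\ref{rem: G-B-T}(1) and the Tamagawa-measure computation recorded in \eqref{eqn: Total-volume}, the diagonal contribution collapses to $\frac{r}{q-1}\sum_c H^D(0)$ with $H^D(0)$ as defined in \eqref{eqn: H(c)}. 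Combining both parts yields the desired formula.

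The main obstacle lies in the bookkeeping of weights: one must verify that the correspondence $(\gamma,b) \mapsto (\vec{c},R,b)$ carries the weight $(q-1)/\#(k(\gamma)^*\cap b\cK b^{-1})$ to the summand $1/\#(R^*/\bF_q^*)$ that appears in the definition of $H^D(\vec{c})$, and that the Eichler-type formula of Proposition~\ref{prop: OE-HC} assembles these local contributions into the claimed global class number. Likewise, for the $\cE_\fa^\circ$ term, the identification of the volume factor with $H^D(0)$ relies on a careful matching between Kurihara's Hirzebruch-proportionality statement, Serre's Euler--Poincar\'e measure, and the Tamagawa normalization; all of these are already set up in Remark~\ref{rem: G-B-T}, so the step is conceptually clear but numerically delicate. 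Once these two computations are in place, the theorem follows by adding the $\times$ and $\circ$ contributions within the right-hand side of Proposition~\ref{prop: int-formula}.
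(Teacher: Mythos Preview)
Your proposal is correct and follows essentially the same route as the paper: starting from Proposition~\ref{prop: int-formula}, you split the sum over $D^*\backslash \cE_\fa/\cK$ into its $\cE_\fa^\times$ and $\cE_\fa^\circ$ parts, parameterize the former by characteristic-polynomial data $\vec{c}$ via Noether--Skolem and the stratification \eqref{eqn: fin}--\eqref{eqn: fin-3} to recover $H^D(\vec{c})$ through Proposition~\ref{prop: OE-HC}, and identify the latter with $\sum_c H^D(0)$ using Remark~\ref{rem: G-B-T} and the volume formula \eqref{eqn: Total-volume}. The bookkeeping points you flag (matching $(q-1)/\#(k(\gamma)^*\cap b\cK b^{-1})$ with $1/\#(R^*/\bF_q^*)$, and the measure normalizations) are exactly the ones the paper handles in the displayed computations of Section~\ref{sec: CR}.
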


\begin{remark}
Suppose $r=2$ and the characteristic of $k$ is odd.
Given $a \in A$, we may denote by $a\prec 0$ if $k(\sqrt{a})$ is an imaginary quadratic field extension over $k$.
For such $a$, we let
\[
H^D(a):= H^D((0,-a)).
\]
Then Theorem~\ref{thm: C-R} can be reformulate as the following:
for each nonzero ideal $\fa$ of $A$, we have
\[
i(\cZ\cdot \cZ_{\fa}) = \frac{2}{q-1}\cdot\sum_{\subfrac{t,a \in A}{aA = \fa,\ t^2-4a\preceq 0}} H^D(t^2-4a).
\]
Furthermore, set
\[
\text{vol}(\cZ) := 2\cdot \#{\rm Pic}(A)\cdot (q_\infty-1) \cdot \zeta_k(-1) \cdot \prod_{v \in {\bf Ram}}(1-q_v).
\]
By employing the ``Weil representation'' as illustrated in \cite[Sec.~3]{GWei0} (or alternatively, by combining the work of \cite{CLWY} with the relations among Hurwitz class numbers established in \cite[Sec.~2]{GWei}), we are able to construct a ``harmonic'' theta series on $\GL_2(\bA)$ whose nonzero (resp.\ constant) Fourier coefficients come from the intersection numbers $i(\cZ\cdot \cZ_\fa)$ for nonzero ideal $\fa$ of $A$ (resp.\ $\text{vol}(\cZ)$).
Although the approach in \cite{GWei0} seems not applicable for $r>2$, we believe that, after further work, this phenomenon remains valid.
\end{remark}


\appendix

\section{Eichler's theory of optimal embeddings}\label{sec: Eich}

Recall that $D$ is a division algebra  over $k$ with $[D:k]=r^2$.
Here we review Eichler's theory of optimal embeddings from an $A$-order $R$ of an imaginary extension $K$ over $k$ into the division algebra $D$.
For simpliciy, we always assume that $r$ is a prime number in this section, which is sufficient for our purpose.

\subsection{Local theory}

Fix a place $v$ of $k$ with $v \neq \infty$,
let
\[
K_v := K \otimes_k k_v,\quad
D_v := D \otimes_k k_v, \quad
O_{D,v} := O_D\otimes_A \cO_v.
\]
As $r$ is a prime number, either $D_v$ is division or $D_v \cong \bM_r(k_v)$.
Thus there always exists an embedding from $K_v$ into $D_v$ when either $D_v\cong \bM_r(k_v)$ or $K_v$ is a field (see \cite[Theorem~1.1~(1)]{SYY}).
Fix an embedding $\phi_v: K_v \hookrightarrow D_v$.
For each $\cO_v$-order $R_v$ in $K_v$, let 
\[
\cE_{\phi_v}^o(R_v):= \{ b_v \in D_v^* \mid \phi_v^{-1}(bO_{D,v}b^{-1}) = R_v\}.
\]
We shall determine the finiteness of the double coset space
\[
\phi_v(K_v^*)\backslash \cE_{\phi_v}^o(R_v)/\cK_v,
\]
where $\cK_v = O_{D,v}^*$, the $v$-component of $\cK$.

\begin{lemma}\label{lem: mu-D-1}
Suppose $D_v$ is division and $K_v$ is a field. 
Then
\begin{align*}
\#\big(\phi_v(K_v^*)\backslash \cE_{\phi_v}^o(R_v)/\cK_v\big)  \\
& \hspace{-2cm} = \mu_v^D(R_v):= 
\begin{cases}
r, & \text{ if $v$ is unramified in $K_v$ and $R_v$ is maximal in $K_v$,}\\
1, & \text{ if $v$ is ramified in $K_v$ and $R_v$ is maximal in $K_v$,}\\
0, & \text{ if $R_v$ is not maximal in $K_v$.}
\end{cases}
\end{align*}
\end{lemma}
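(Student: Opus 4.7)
\textit{Proof plan.} The key observation is that $D_v$ is a local division algebra, so it carries a unique maximal $\cO_v$-order $O_{D,v}$ consisting of those elements with non-negative value under the (unique) valuation $w_D$ on $D_v$ extending $\nu_v$. In particular, $b O_{D,v} b^{-1} = O_{D,v}$ for every $b \in D_v^*$, so
\[
\phi_v^{-1}(b O_{D,v} b^{-1}) = \phi_v^{-1}(O_{D,v}) = O_{K_v},
\]
the maximal $\cO_v$-order in $K_v$. Thus $\cE_{\phi_v}^o(R_v) = D_v^*$ if $R_v = O_{K_v}$ and $\cE_{\phi_v}^o(R_v) = \emptyset$ otherwise. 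This settles the non-maximal case, so it remains to compute $\#\bigl(\phi_v(K_v^*)\backslash D_v^*/\cK_v\bigr)$ when $R_v = O_{K_v}$.

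Next, I would reduce to an index computation via $w_D$. Since $D_v$ is a central division algebra of dimension $r^2$ over $k_v$, the valuation $w_D$ satisfies $w_D(D_v^*) = \mathbb{Z}$ and $w_D(k_v^*) = r\mathbb{Z}$, and it induces an isomorphism $D_v^*/\cK_v \cong \mathbb{Z}$. Because left multiplication by $\phi_v(K_v^*)$ translates the image in $\mathbb{Z}$ by $w_D\circ \phi_v(K_v^*)$, one has
\[
\#\bigl(\phi_v(K_v^*)\backslash D_v^*/\cK_v\bigr) = [\,\mathbb{Z} : w_D(\phi_v(K_v^*))\,].
\]

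It remains to compute this index according to the splitting behaviour of $v$ in $K_v$. Writing $K_v = k_v(\pi_K)$ for a uniformizer $\pi_K$ of $K_v$, I would use that $\pi_K^{e} = \pi_v \cdot u$ for a unit $u \in O_{K_v}^*$, where $e = e(K_v/k_v) \in \{1,r\}$. Applying $w_D \circ \phi_v$ gives $e \cdot w_D(\phi_v(\pi_K)) = w_D(\pi_v) = r$. In the ramified case $(e=r)$ this yields $w_D(\phi_v(\pi_K)) = 1$, so the image generates $\mathbb{Z}$ and the index is $1$. In the unramified case $(e=1)$, $\phi_v(\pi_v)$ generates $K_v^*$ modulo $O_{K_v}^*$ and has $w_D$-value $r$, so the image is $r\mathbb{Z}$ and the index is $r$. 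Combining the three cases gives the stated formula for $\mu_v^D(R_v)$. There is no real obstacle; the result is essentially a bookkeeping of local invariants of a division algebra and its maximal subfields, and the whole argument rests on the uniqueness of the maximal order in $D_v$.
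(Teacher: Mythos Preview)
Your proposal is correct and follows essentially the same route as the paper: both use the uniqueness of the maximal order in the local division algebra $D_v$ to conclude $\phi_v^{-1}(bO_{D,v}b^{-1})=O_{K_v}$ for every $b$, settling the non-maximal case and reducing the maximal case to computing $\#\bigl(\phi_v(K_v^*)\backslash D_v^*/\cK_v\bigr)$. The paper simply asserts the value of this double coset count, while you supply the explicit justification via the valuation $w_D$ and the ramification index $e(K_v/k_v)$; this is exactly the argument one would expect to fill in the paper's omitted step, not a genuinely different approach.
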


\begin{proof}
When $D_v$ is division and $K_v$ is a field, one knows that $O_{D,v}$ is the unique maximal $\cO_v$-order in $D_v$, for which $b_v O_{D,v}b_v^{-1} = O_{D,v}$.
Moreover, every element in $D_v$ which is integral over $\cO_v$ is contained in $O_{D,v}$.
Therefore for every $b_v \in D_v^*$,
\[
\phi_v^{-1}(b_v O_{D,v} b_v^{-1})
= \phi_v^{-1}(O_{D,v}) = R_{K,v},
\quad \text{the maximal $\cO_v$-order in $K_v$}.
\]
Thus 
$\#\big(\phi_v(K_v^*)\backslash \cE_{\phi_v}^o(R_v)/\cK_v\big) = 0$
if $R_v$ is not maximal in $K_v$.

When $R_v = R_{K,v}$, we get that
$\cE_{\phi_v}^o(R_v) = D_v^*$.
Hence
\begin{align*}
\#\big(\phi_v(K_v^*)\backslash \cE_{\phi_v}^o(R_v)/\cK_v\big)
&= \#\big(\phi_v(K_v^*)\backslash D_v^*/\cK_v\big) \\
& = 
\begin{cases}
    r, & \text{if $v$ is unramified in $K_v$}\\
    1, & \text{if $v$ is (totally) ramified in $K_v$.}
\end{cases}
\end{align*}
This completes the proof.
\end{proof}

\begin{lemma}\label{lem:local-multi}
Suppose $D_v\cong \bM_r(k_v)$.
Then the cardinality of $\phi_v(K_v^*)\backslash \cE_{\phi_v}^o(R_v)/\cK_v$ is finite and positive.
In particular, if $R_v$ is maximal in $K_v$, then
\[
\#\big(\phi_v(K_v^*)\backslash \cE_{\phi_v}^o(R_v)/\cK_v\big) = 1.
\]
\end{lemma}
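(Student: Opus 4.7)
The plan is to translate the local problem into one about lattices in a free $K_v$-module of rank one, where the structure is well understood.

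First I would identify $D_v \cong \End_{k_v}(V)$ with $V = k_v^r$, and $O_{D,v} \cong \End_{\cO_v}(L_0)$ for a fixed $\cO_v$-lattice $L_0 \subset V$. Then $\cK_v = \Aut_{\cO_v}(L_0)$ and the map $b_v \mapsto b_v L_0$ gives a bijection
\[
D_v^* / \cK_v \longleftrightarrow \{\cO_v\text{-lattices in } V\}, \qquad b_v O_{D,v} b_v^{-1} = \End_{\cO_v}(b_v L_0).
\]
Via $\phi_v$, the space $V$ becomes a faithful $K_v$-module of $k_v$-dimension $r = \dim_{k_v} K_v$, hence is free of rank one over the commutative $k_v$-algebra $K_v$. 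Fixing such an isomorphism $V \cong K_v$, we can rewrite
\[
\phi_v(K_v^*) \backslash \cE^o_{\phi_v}(R_v) / \cK_v \;\longleftrightarrow\; K_v^* \backslash \bigl\{L \subset K_v \text{ an } \cO_v\text{-lattice} : R_L = R_v\bigr\},
\]
where $R_L := \{x \in K_v \mid xL \subset L\}$ is the multiplier $\cO_v$-order of $L$.

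For positivity, the lattice $L = R_v$ itself satisfies $R_L = R_v$, so the set is non-empty. For finiteness, any $L$ with $R_L = R_v$ is in particular an $R_v$-stable lattice, i.e.\ a fractional $R_v$-ideal in $K_v$. Scaling by a suitable element of $K_v^*$, one may normalize $L$ so that $\mathfrak{c} \subset L \subset R_{K,v}$, where $R_{K,v}$ is the maximal $\cO_v$-order in $K_v$ and $\mathfrak{c}$ is the conductor of $R_v$ in $R_{K,v}$. Since $R_{K,v}/\mathfrak{c}$ is a finite ring (the residue field of $\cO_v$ is finite and $R_{K,v}$ is a finitely generated $\cO_v$-module), only finitely many lattices lie in this sandwich, yielding the desired finiteness.

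For the last assertion, suppose $R_v = R_{K,v}$ is maximal. Since $K_v$ is either a field extension of $k_v$ or a product of such (by $r$ being prime), $R_{K,v}$ is a product of complete discrete valuation rings, so its Picard group is trivial: every fractional $R_{K,v}$-ideal in $K_v$ is principal, and hence lies in a single $K_v^*$-orbit. Moreover, any such $L$ is automatically $R_{K,v}$-stable with multiplier ring equal to the maximal $R_{K,v}$ (as $R_L \supseteq R_{K,v}$ and $R_L$ is an $\cO_v$-order in $K_v$). This produces exactly one class, i.e.\ the cardinality is $1$.

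The main technical point is the last paragraph's finiteness step, ensuring one really can bound every fractional $R_v$-ideal, up to $K_v^*$, inside a fixed finite sandwich $\mathfrak{c} \subset L \subset R_{K,v}$; this uses that $R_v$ contains its conductor $\mathfrak{c}$ and that the split case $K_v \cong \prod K_{v,i}$ still admits a well-defined conductor, which requires care only in checking that the scaling can be done componentwise.
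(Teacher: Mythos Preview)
Your proposal is correct and follows essentially the same approach as the paper: both translate the double coset into $K_v^*$-orbits of $\cO_v$-lattices $L$ in $V$ whose multiplier order equals $R_v$, then use that $R_{K,v}L$ is principal to normalize $L$ into a finite sandwich (the paper uses $\mathfrak{p}_v^{c_v}R_{K,v}\subset L\subset R_{K,v}$, you use the conductor $\mathfrak{c}\subset L\subset R_{K,v}$, which amounts to the same thing once you observe $\mathfrak{c}=\mathfrak{c}R_{K,v}=\mathfrak{c}(R_{K,v}L)\subset L$). The maximal case is handled identically via principality of fractional $R_{K,v}$-ideals.
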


\begin{proof}
Under the isomorphism $D_v\cong \bM_r(k_v)$, we may identify $O_{D,v}$ with $\text{Mat}_r(\cO_v)$ without loss of generality.
Let $V = k_v^r$, which is equipped with a left module structure over $\bM_r(k_v) = \text{End}_{k_v}(V)$.
An $R_v$-lattice $L$ is a free $\cO_v$-submodule of $V$ which is invariant under the multiplication by $\phi_v(R_v)$, and $L$ is {\it optimal} if $\phi_v(R_v) = \phi_v(K_v) \cap \text{End}_{\cO_v}(L)$.
Two $R_v$-lattices $L_1$ and $L_2$ are isomorphic if there exists $b \in \text{Aut}_{k_v}(V)$ so that $b L_1 = L_2$, and 
\[
b\big(\phi_v(\alpha)\lambda\big) = \phi_v(\alpha)\cdot b(\lambda), \quad \forall \alpha \in K_v,\ \lambda \in L_1.
\]
Then $b \in \phi_v(K_v^*)$, and we have a bijection
\[
\phi_v(K_v^*)\backslash \cE_{\phi_v}^o(R_v)/\cK_v \stackrel{\sim}{\longrightarrow} \{\text{isomorphism classes of optimal $\cO_v$-lattices in $V$}\}
\]
which is induced by sending $b$ to the optimal $R_v$-lattice $b\cdot \cO_v^r$ for every $b \in \cE_{\phi_v}^o(R_v)$.
Thus it suffices to count the isomorphism classes of optimal $R_v$-lattices in $V$.

Notice that when $R_v$ is maximal in $K_v$, every $R_v$-lattice in $L$ must be free of rank one over $R_v$.
Hence there is only one isomorhism class.
In general, for each optimal $R_v$-lattice $L$ in $K_v$, consider $\tilde{L}:= \phi_v(R_{K,v})\cdot L \subset V$, which is an $R_{K,v}$-lattice.
On the other hand, there exists $c_v \in \bZ_{\geq 0}$ such that $\fp_v^{c_v} \cdot R_{K,v} \subset R_v$ (the smallest choice of `$c_v$' refers to the ``conductor'' of $R_v$), whence $\fp_v^{c_v} \cdot \tilde{L} \subset L$.
Since $\tilde{L}/\fp_v^{c_v} \tilde{L}$ has finite cardinality, we obtain that the isomorphism classes of $R_v$-lattices in $V$ is finite (and bounded by $\#(R_{K,v}/\fp_v^{c_v} R_{K,v})$).
\end{proof}

\begin{remark}\label{rem: local-mu}
Suppose $D_v \cong \bM_r(k_v)$. Let $\mu_v^D(R_v)$ be the cardinality of the isomorphism classes of optimal $R_v$-lattices in $V$.
Then the above proof shows that $\mu_v^D(R_v)$ is finite positive, and $\mu_v^D(R_v)=1$ for almost all $v$.
In particular,
\[
\#\big(\phi_v(K_v^*)\backslash \cE_{\phi_v}^o(R_v)/\cK_v\big) = \mu_v^D(R_v).
\]
\end{remark}

\subsection{Global theory and modified Hurwitz class numbers}

First, the condition for the existence of a $k$-algebra embedding from $K$ into $D$ is determined by the following.

\begin{theorem}
(See \cite[Proposition~A.1]{PR}) Let $D$ be a central simple algebra of degree $r$ over $k$ and $K$ is a field extension of degree $r$ over $k$.
Then the local-global principle holds, i.e.\ $K$ can be embedded into $D$ if and only if $K_v$ can be embedded into $D_v$ for every place $v$ of $K$.
\end{theorem}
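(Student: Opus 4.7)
The forward direction is immediate: any $k$-algebra embedding $K \hookrightarrow D$ yields, by base change, a $k_v$-algebra embedding $K_v = K\otimes_k k_v \hookrightarrow D_v$ at every place $v$ of $k$.

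For the converse, the plan is to recast the embedding question cohomologically and then invoke the Albert--Brauer--Hasse--Noether theorem for the global function field $k$. The first input is the classical fact that, for any field $F$, a central simple $F$-algebra $A$ of degree $r$, and a commutative \'etale $F$-algebra $L$ with $\dim_F L = r$, there is an $F$-algebra embedding $L \hookrightarrow A$ if and only if $L$ splits $A$, i.e.\ $A\otimes_F L$ is a product of matrix algebras over the field factors of $L$ (a standard statement from the theory of central simple algebras; see e.g.\ Reiner's \emph{Maximal Orders}, \S 28). Applied with $(F,A,L) = (k,D,K)$ and with $(k_v,D_v,K_v)$, this reduces the global (resp.\ local) embedding problem to the triviality of $[D\otimes_k K] \in \operatorname{Br}(K)$ (resp.\ of $[D_v \otimes_{k_v} K_w] \in \operatorname{Br}(K_w)$ for each place $w$ of $K$ above $v$).

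Next, invoke the fundamental exact sequence
$$ 0 \longrightarrow \operatorname{Br}(K) \longrightarrow \bigoplus_{w} \operatorname{Br}(K_w) \xrightarrow{\ \sum \operatorname{inv}_w\ } \mathbb{Q}/\mathbb{Z} \longrightarrow 0, $$
where $w$ runs over all places of the global function field $K$. The injectivity of the left arrow is the Hasse principle for Brauer groups: the class $[D\otimes_k K]$ vanishes in $\operatorname{Br}(K)$ if and only if each local component $[D_v \otimes_{k_v} K_w]$ vanishes. Grouping the $w$'s according to their restrictions $v = w|_k$ and recombining via the key lemma yields the claimed equivalence with the collection of local embedding conditions over all places $v$ of $k$.

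The main substantive input is the Hasse principle for $\operatorname{Br}(K)$; the algebra-splitting lemma is standard and poses no obstacle. One minor technical point to check is that when $v$ splits in $K$, the local algebra $K_v = \prod_{w|v}K_w$ is not a field, but the splitting lemma applies verbatim to \'etale $k_v$-algebras of the correct dimension, so the reduction proceeds uniformly in all cases.
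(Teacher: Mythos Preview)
The paper does not supply its own proof of this theorem; it simply cites \cite[Proposition~A.1]{PR} and moves on. Your argument is correct and is precisely the standard route: reduce the embedding condition to the vanishing of $[D\otimes_k K]$ in $\operatorname{Br}(K)$ via the splitting--subfield criterion, and then apply the Albert--Brauer--Hasse--Noether exact sequence for the global field $K$. This is essentially the argument one finds in the cited reference, so there is nothing to compare.

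One small remark on the \'etale point you flag at the end: over a local field $k_v$ with $D_v \cong M_s(\Delta_v)$ (where $\Delta_v$ is division of degree $d$, $r=sd$), the precise statement is that a commutative \'etale $k_v$-algebra $K_v = \prod_{w\mid v} K_w$ of dimension $r$ embeds in $D_v$ if and only if $d \mid [K_w:k_v]$ for every $w$, which is exactly the condition that each $K_w$ split $D_v$. So your ``applies verbatim'' is justified, but it is worth making this explicit rather than leaving it as a parenthetical, since the idempotent decomposition of $K_v$ inside $D_v$ is where the dimension bookkeeping happens.
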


Now, let $D$ be a division algebra over $k$ and $r$ is a prime number. 
Suppose an embedding $\phi: K \hookrightarrow D$ exists, which corresponds to embeddings $\phi_v:K_v\hookrightarrow D_v$ for all finite places $v$ of $k$.
Let $R$ be an $A$-order of $K$.
Put
\[
\cE^o_{\phi}(R):=\{b \in D^*(\bA_f)\mid \phi^{-1}(O_{D,b}) = R\}.
\]
Here $O_{D,b}$ is defined in the beginning of Section~\ref{sec: CR}.
We shall express the cardinality of the double coset space $\phi(K^*)\backslash \cE_\phi^o(R)/\cK$ as ``modified class number'' of $R$.

For each finite place $v$ of $k$, let $R_v:=R\otimes_A \cO_v$.
For $b = (b_v)_{v\neq \infty} \in D^*(\bA_f)$, one has that 
$b \in \cE_\phi^o(R)$
if and only if $b_v \in \cE_{\phi_v}^o(R_v)$ for every $v \neq \infty$.
Define $K^*(\bA_f):= (K\otimes_k \bA_f)^*$.
Then Lemma~\ref{lem:local-multi} ensures that 
\[
\phi(K^*(\bA_f))\backslash \cE^o_{\phi}(R)/\cK
= \prod_v \phi_v(K_v^*)\backslash \cE^o_{\phi,v}(R_v)/\cK_v.
\]
On the other hand, 
let $h(R)$ be the class number of $R$, i.e.\
\[
h(R):= \#(K^*\backslash K^*(\bA_f)/\widehat{R}^*),\quad \text{ where $\widehat{R}:= \prod_{v\neq \infty} R_v$}.
\]
We have the following.

\begin{lemma}\label{lem: fiber-card}
Under the canonical surjective map 
\[
\phi(K^*)\backslash \cE_\phi^o(R)/\cK\longrightarrow \phi(K^*(\bA_f))\backslash \cE_\phi^o(R)/\cK,
\]
the cardinality of each fiber is equal to $h(R)$.
\end{lemma}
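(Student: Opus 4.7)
The plan is to compute each fiber of the natural map directly by tracking what it means for two elements to become identified. Fix a class $[b]$ on the target side, represented by $b \in \cE_\phi^o(R)$; the fiber over $[b]$ consists of the $\phi(K^*)$-orbits inside the single $\phi(K^*(\bA_f))$-orbit $\phi(K^*(\bA_f)) \cdot b\cK /\cK$. Thus a general element of the fiber has the form $\phi(K^*) \cdot \phi(\alpha) b \cK$ for some $\alpha \in K^*(\bA_f)$, and my first step is to parameterize when two such elements $\phi(K^*) \cdot \phi(\alpha_1) b \cK$ and $\phi(K^*) \cdot \phi(\alpha_2) b \cK$ coincide.

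Unwinding the equivalence, there must exist $\beta \in K^*$ and $\kappa \in \cK$ with $\phi(\beta\alpha_1) b = \phi(\alpha_2) b \kappa$, or equivalently $\phi(\alpha_2^{-1}\beta \alpha_1) \in \phi(K^*(\bA_f)) \cap b \cK b^{-1}$. The key step is to identify this intersection. Since $\cK = (\cD^\infty)^*$, we have $b\cK b^{-1} = (b\cD^\infty b^{-1})^*$, whose intersection with $D$ is $O_{D,b}^*$ at each finite component. An element $\phi(\alpha)$ lies in this intersection precisely when $\alpha_v \in \phi_v^{-1}(O_{D,b,v})^*$ for every finite $v$, and the defining condition $\phi^{-1}(O_{D,b}) = R$ of $\cE_\phi^o(R)$ then identifies this with $\widehat{R}^*$. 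Thus
\[
\phi(K^*(\bA_f)) \cap b\cK b^{-1} = \phi(\widehat{R}^*),
\]
and two classes $[\phi(\alpha_1) b]$ and $[\phi(\alpha_2) b]$ coincide iff $\alpha_2 \in K^* \alpha_1 \widehat{R}^*$.

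Consequently, sending $\phi(K^*) \cdot \phi(\alpha) b \cK$ to the double coset $K^* \alpha \widehat{R}^*$ yields a well-defined bijection from the fiber over $[b]$ to $K^*\backslash K^*(\bA_f) / \widehat{R}^*$, whose cardinality is $h(R)$ by definition. A small detail to verify is that this bijection is independent of the chosen representative $b$ of the class in $\phi(K^*(\bA_f))\backslash \cE_\phi^o(R)/\cK$: replacing $b$ by $\phi(\alpha_0) b \kappa_0$ with $\alpha_0 \in K^*(\bA_f)$ and $\kappa_0 \in \cK$ only shifts $\alpha$ by $\alpha_0^{-1}$, which does not change the class modulo $K^*$ on the left and $\widehat{R}^*$ on the right.

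The only substantive point, and hence the main obstacle, is justifying the identification $\phi(K^*(\bA_f)) \cap b\cK b^{-1} = \phi(\widehat{R}^*)$; everything else is bookkeeping. This identification hinges on the optimal embedding condition $\phi^{-1}(O_{D,b}) = R$ built into the definition of $\cE_\phi^o(R)$, and is precisely the place where the ``optimality'' of the embedding is used.
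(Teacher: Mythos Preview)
Your argument is correct and follows exactly the same route as the paper: identify the fiber over $[b]$ with $\phi(K^*)\backslash \phi(K^*(\bA_f))/\big(\phi(K^*(\bA_f))\cap b\cK b^{-1}\big)$, then use the optimality condition $\phi^{-1}(O_{D,b})=R$ to get $\phi(K^*(\bA_f))\cap b\cK b^{-1}=\phi(\widehat{R}^*)$, so the fiber has cardinality $h(R)$.

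One small slip: your final ``independence of representative'' remark is both unnecessary and not quite right. Replacing $b$ by $\phi(\alpha_0)b\kappa_0$ sends $\alpha$ to $\alpha\alpha_0^{-1}$, and right multiplication by $\alpha_0^{-1}$ generally \emph{does} change the double coset $K^*\alpha\widehat{R}^*$. What is true is that it induces a bijection of $K^*\backslash K^*(\bA_f)/\widehat{R}^*$ with itself (since $K^*(\bA_f)$ is abelian), so the \emph{cardinality} of the fiber is unchanged, which is all that is needed. You can simply drop that paragraph.
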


\begin{proof}
Given $b \in \cE_\phi^o(R)$, the fiber of $b$ can be identified with the double coset space
\[
\phi(K^*)\Big\backslash \phi(K^*(\bA_f))\Big/\big(\phi(K^*(\bA_f))\cap b\cK b^{-1}\big),
\]
and the condition of $b$ lying in $\cE_\phi^o(R)$ implies that
\[
\phi(K^*(\bA_f))\cap b\cK b^{-1} = \phi(\widehat{R}^*).
\]
Therefore the result follows.
\end{proof}

Consequently, let $h^D(R)$ be the following \textit{modified class number} (with respect to $D$):
\[
h^D(R) := h(R)\cdot \prod_{v \neq \infty}\mu_v^D(R_v),
\]
where $\mu_v^D(R_v)$ is the introduced in Lemma~\ref{lem: mu-D-1} when $D_v$ is division and  Remark~\ref{rem: local-mu} when $D_v \cong \bM_r(k_v)$.
We then conclude that:

\begin{proposition}\label{prop: finiteness}
The double coset space $\phi(K^*)\backslash \cE_\phi^o(R)/\cK$ is a finite set, and its cardinality is equal to $h^D(R)$.
\end{proposition}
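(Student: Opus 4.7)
The plan is to combine the fiber computation of Lemma~\ref{lem: fiber-card} with the local factorization of the adelic double coset space, then assemble the pieces using the local multiplicities $\mu_v^D(R_v)$ established in Lemma~\ref{lem: mu-D-1} and Remark~\ref{rem: local-mu}.

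First I would set up the canonical surjective map
\[
\Psi: \phi(K^*)\backslash \cE_\phi^o(R)/\cK \twoheadrightarrow \phi\big(K^*(\bA_f)\big)\backslash \cE_\phi^o(R)/\cK,
\]
and invoke Lemma~\ref{lem: fiber-card}, which tells me that every fiber of $\Psi$ has cardinality exactly $h(R)$. Thus the cardinality of the source equals $h(R)$ times the cardinality of the target, provided the target is finite.

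Next I would factorize the target over the finite places. As noted in the excerpt, the condition $b=(b_v)_v \in \cE_\phi^o(R)$ is purely local (it says $b_v \in \cE_{\phi_v}^o(R_v)$ for every $v\neq\infty$), and both $\phi(K^*(\bA_f))$ and $\cK$ factor as restricted products over $v$. Consequently
\[
\phi(K^*(\bA_f))\backslash \cE_\phi^o(R)/\cK \;\cong\; \prod_{v \neq \infty}\Big(\phi_v(K_v^*)\backslash \cE^o_{\phi_v}(R_v)/\cK_v\Big).
\]
By Lemma~\ref{lem: mu-D-1} (when $D_v$ is division) and Remark~\ref{rem: local-mu} (when $D_v\cong \bM_r(k_v)$), each local factor has cardinality $\mu_v^D(R_v)$, which is finite.

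To see that this infinite product is actually finite, I would observe that $R_v$ equals the maximal order $R_{K,v}$ for all but finitely many $v$ (since $R$ differs from the integral closure of $A$ in $K$ only at the finitely many primes dividing the conductor), and $D_v \cong \bM_r(k_v)$ for all $v \notin \bf{Ram}$. For any $v$ satisfying both conditions, Lemma~\ref{lem:local-multi} gives $\mu_v^D(R_v) = 1$. Hence only finitely many factors differ from $1$, and the product $\prod_{v\neq \infty}\mu_v^D(R_v)$ is a well-defined positive integer. Multiplying by $h(R)$ yields
\[
\#\big(\phi(K^*)\backslash \cE_\phi^o(R)/\cK\big) \;=\; h(R)\cdot \prod_{v\neq\infty}\mu_v^D(R_v) \;=\; h^D(R),
\]
as desired.

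The only mildly delicate point is the vanishing of $\mu_v^D(R_v)$ in the division case when $R_v$ is non-maximal (Lemma~\ref{lem: mu-D-1}), which would make $h^D(R)=0$; but this is consistent with the statement because in that situation $\cE_\phi^o(R)$ itself is empty (the unique maximal order $O_{D,v}$ forces $\phi_v^{-1}(b_v O_{D,v}b_v^{-1})$ to always equal $R_{K,v}$, contradicting $R_v \subsetneq R_{K,v}$). No substantial technical obstacle arises; the main work was already done in Lemmas~\ref{lem: mu-D-1}, \ref{lem:local-multi}, and \ref{lem: fiber-card}, and this proof is essentially a bookkeeping assembly of those ingredients via the strong approximation-style factorization.
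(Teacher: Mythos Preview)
Your proof is correct and follows essentially the same route as the paper: factorize the adelic double coset space into local pieces via Lemma~\ref{lem: mu-D-1} and Remark~\ref{rem: local-mu}, then multiply by the fiber cardinality $h(R)$ from Lemma~\ref{lem: fiber-card}. Your added remarks on why almost all local factors equal $1$ and on the consistency of the vanishing case are helpful elaborations, but the core argument is identical to the paper's.
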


\begin{proof}
By Lemma~\ref{lem:local-multi} and Remark~\ref{rem: local-mu}, we get
\[
\#\big(\phi(K^*(\bA_f))\backslash \cE_\phi^o(R)/\cK\big)
= \prod_v \big(\phi(K_v^*)\backslash \cE^o_{\phi,v}(R_v)/\cK_v\big) = \prod_v \mu_v^D(R_v) \quad < \infty.
\]
Therefore Lemma~\ref{lem: fiber-card} ensures the finiteness of
$\phi(K^*)\backslash \cE_\phi^o(R)/\cK$, and 
\[
\#\big(\phi(K^*)\backslash \cE_\phi^o(R)/\cK\big) = h(R) \cdot \#\big(\phi(K^*(\bA_f))\backslash \cE_\phi^o(R)/\cK\big) = h^D(R) 
\]
as desired.
\end{proof}

\begin{remark}\label{rem: finiteness}
Given a non-zero ideal $\fa$ of $A$ so that $\cZ$ is not a component of $\cZ_\fa$,
Then Proposition~\ref{prop: finiteness}, the equations~\eqref{eqn: fin} and \eqref{eqn: fin-3} guarantee the finiteness of $D^*\backslash \cE_\fa^{\times} /\cK$.
\end{remark}

\begin{definition}\label{def: MHC}
Given $\vec{c} \in A^r$ satisfying that $K_{\vec{c}}$ is imaginary,
the \textit{modified Hurwitz class number of $R_{\vec{c}}$} (with respect to $D$) is given by
\[
H^D(\vec{c}):= \sum_{R: R_{\vec{c}}\subset R}\frac{h^D(R)}{\#(R^*/\bF_q^*)}.
\]
\end{definition}

We finally arrive at:

\begin{proposition}\label{prop: OE-HC}
\[
\sum_{R: R_{\vec{c}}\subset R}
\frac{\#\big(\phi(K_{\vec{c}}^*)\backslash \cE^o_\phi(R)/\cK\big)}{\#(R^*/\bF_q^*)} = H^D(\vec{c}).
\]
\end{proposition}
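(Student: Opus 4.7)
The plan is essentially a direct unpacking: the proposition is a termwise consequence of Proposition~\ref{prop: finiteness} combined with Definition~\ref{def: MHC}.

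First I would fix the setup inherited from the main text: $K_{\vec{c}}/k$ is an imaginary extension of degree $r$ (with $r$ prime), and $\phi:K_{\vec{c}}\hookrightarrow D$ is the chosen $k$-algebra embedding (whose existence is what allowed us to speak of $\cE(\vec{c})$ and its decomposition \eqref{eqn: fin-3} in the first place). Under these hypotheses Proposition~\ref{prop: finiteness} applies verbatim to every $A$-order $R$ of $K_{\vec{c}}$ and gives
\[
\#\big(\phi(K_{\vec{c}}^*)\backslash \cE^o_\phi(R)/\cK\big) = h^D(R).
\]

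Next, I would divide both sides by $\#(R^*/\bF_q^*)$ and sum over the $A$-orders $R$ of $K_{\vec{c}}$ with $R_{\vec{c}}\subset R$. The left-hand side is then the one appearing in the proposition, and the right-hand side becomes
\[
\sum_{R:\,R_{\vec{c}}\subset R}\frac{h^D(R)}{\#(R^*/\bF_q^*)},
\]
which is exactly $H^D(\vec{c})$ by Definition~\ref{def: MHC}.

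The only bookkeeping point worth checking is that the indexing on both sides genuinely matches. On the left one only keeps $R$ with $R_{\vec{c}}\subset R$; but this restriction is automatic, since $\cE^o_\phi(R)$ is nonempty only when $R = \phi^{-1}(O_{D,b})$ for some $b$, in which case $R$ contains the $A$-algebra generated by $\phi(\bar{x})=\gamma_\phi$, i.e.\ contains $R_{\vec{c}}$. Thus there is no substantive obstacle — the statement is pure bookkeeping, and all of the nontrivial content has already been packaged into Proposition~\ref{prop: finiteness} and Definition~\ref{def: MHC}.
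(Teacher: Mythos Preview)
Your proposal is correct and follows exactly the paper's approach: the paper gives no proof at all (just ``We finally arrive at''), since the statement is an immediate consequence of Proposition~\ref{prop: finiteness} applied termwise together with Definition~\ref{def: MHC}. Your final bookkeeping paragraph is unnecessary --- both sides of the identity are already indexed over the same set $\{R : R_{\vec{c}}\subset R\}$ by fiat, so there is nothing to check.
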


\printbibliography

@article{B,
AUTHOR = {Briney, R. E.},
title={Intersection theory on quotients of algebraic varieties},
JOURNAL = {Amer. J. of Math},
    VOLUME = {84},
      YEAR = {1962},
       PAGES = {217-238}
}

@Inbook{BS,
  AUTHOR= {Blum A., Stuhler U.},
  TITLE={Drinfeld modules and elliptic sheaves},
  BOOKTITLE={Vector Bundles on Curves --- New Directions: Lectures given at the 3rd Session of the Centro Internazionale Matematico Estivo (C.I.M.E.) held in Cetraro (Cosenza), Italy, June 19--27, 1995},
  YEAR={1997},
  PUBLISHER={Springer Berlin Heidelberg},
  PAGES={110--188},
  ISBN={978-3-540-49701-1},
 % DOI={10.1007/BFb0094426}
}

@article{Chev,
     author = {Chevalley, Claude},
     title = {Les classes d'\'equivalence rationnelle, {I}},
     journal = {S\'eminaire Claude Chevalley},
     note = {talk:2},
     publisher = {Secr\'etariat math\'ematique},
     volume = {3},
     year = {1958},
     language = {fr}
}

@article{DH,
AUTHOR = {Deligne P., Husem\"oller D.},
title={Survey of Drinfeld modules},
JOURNAL = {Contemporary Math.},
    VOLUME = {67},
      YEAR = {1987},
     PAGES = {25–91}
}

@article{DM,
AUTHOR = {Deligne P., Mumford D.},
title={The Irreducibility of the Space of Curves of Given Genus},
JOURNAL = {Publ. Math. IHES},
  FJOURNAL = {Publications Mathèmatiques de l’Institut des Hautes Scientifiques},
    VOLUME = {36},
      YEAR = {1969},
     PAGES = {75–109},
     %  DOI = {10.1007/s00222-016-0708-y},
}

@article{Dr1,
    author = {Drinfeld, V.G.} ,
    title = {Elliptic modules},
    journal = {Math. USSR Sbornik23, 561–592},
    year = {1974},
}

@book{GW,
AUTHOR= { G\"ortz U., Wedhorn T.}, 
TITLE= {Algebraic Geometry},
SERIES= {Advanced Lectures in Mathematics},
YEAR={2010},
PUBLISHER= {Vieweg+Teubner Verlag Wiesbaden},
PAGES={IV, 615}, 
ISBN={978-3-8348-9722-0},

}

@book{H,
AUTHOR= { Hartshorne, R.}, 
TITLE= {Algebraic Geometry},
SERIES= {Graduate Texts in Mathematics},
YEAR={1977},
PUBLISHER= {Springer New York},
PAGES={XVI, 496}, 
ISBN={978-0-387-90244-9},

}

@article {LRS,
    AUTHOR = {Laumon G., Rapoport M.,Stuhler U.},
     TITLE = {$\mathcal{D}$-elliptic sheaves and the Langlands correspondence},
   JOURNAL = {Invent. Math.},
  FJOURNAL = {Inventiones Mathematicae},
    VOLUME = {113},
      YEAR = {1993},
     PAGES = {217-338}
}

@article {Pap,
    AUTHOR = {Papikian, M.},
     TITLE = {Drinfeld–Stuhler modules},
   JOURNAL = {Res Math Sci},
   FJOURNAL={Research in the Mathematical Sciences}, 
   VOLUME = {5},
    YEAR = {2018}
}

@article {SS,
    AUTHOR = {Schneider P., Stuhler U.},
     TITLE = {The cohomology of p-adic symmetric spaces},
   JOURNAL = {Invent. Math.},
   FJOURNAL={Inventiones Mathematicae}, 
   VOLUME = {105},
    YEAR = {1991},
PAGES = {47-122}
}

@book {Serre,
    AUTHOR = {Serre, J.-P.}, 
     TITLE = {Groupes alg\'ebriques et corps de classes},
     SERIES = {Actualit\'es Scientifiques et Industrielles},
     PUBLISHER = {Hermann (Paris)},
     NUMBER = {1264},
      YEAR = {1959},
      ISBN = {9782705612641}
}

@book {Ul,
    AUTHOR = {\"Ulkem, \"O.}, 
     TITLE = {Uniformization of generalized $\cD$-elliptic sheaves, PhD thesis},
     PUBLISHER = {Heidelberg University}, 
      YEAR = {2020},
      URL = {http://www.ub.uni-heidelberg.de/archiv/29531},
}

@article {SYY,
    AUTHOR = {Shih, S.-C. and Yang, T.-C. and Yu, C.-F.},
     TITLE = {Embeddings of fields into simple algebras over global fields},
   JOURNAL = {Asian J.~Math},
   FJOURNAL={The Asian Journal of Mathematics}, 
   VOLUME = {18},
   NUMBER = {2},
   YEAR = {2014},
   Pages = {365--386},
%     URL = {https://doi.org/10.4310/AJM.2014.v18.n2.a9},
}

@book{Ful,
AUTHOR= { Fulton, W.}, 
TITLE= {Intersection Theory, Second edition},
%SERIES= {Graduate Texts in Mathematics},
YEAR={1998},
PUBLISHER= {Springer New York},
PAGES={VIII, 470}, 
ISBN={978-0-387-98549-7},

}

@article {GWei0,
    AUTHOR = {Guo, J.-W. and Wei, F.-T.},
     TITLE = {On class number relations and intersections over function fields},
   JOURNAL = {Doc.~math.},
   FJOURNAL={Documenta Mathematica}, 
   VOLUME = {27},
   NUMBER = {},
     YEAR = {2022},
    PAGES = {1321--1368},
}

@article {GWei,
    AUTHOR = {Guo, J.-W. and Wei, F.-T.},
     TITLE = {On CM masses over global function fields},
   JOURNAL = {Math.~Z.},
   FJOURNAL={Mathematische Zeitschrift}, 
   VOLUME = {303},
   NUMBER = {29},
     YEAR = {2023},
%    PAGES = {},
}

@article {CLWY,
    AUTHOR = {Chuang, C.-Y. and Lee, T.-F. and Wei, F.-T. and Yu, J.},
     TITLE = {Brandt matrices and theta series over global function fields},
   JOURNAL = {Mem.~Amer.~Math.~Soc.},
   FJOURNAL={Memoirs of the American Mathematical Society}, 
   VOLUME = {237},
   NUMBER = {1117},
     YEAR = {2015},
    %PAGES = {},
}

@article {Hur,
    AUTHOR = {Hurwitz, A.},
     TITLE = {\"Uber Relationen zwischen Klassenzahlen bin\"arer quadratischer Formen von negativer Determinante},
   JOURNAL = {Math.~Ann.},
   FJOURNAL={Mathematische Annalen}, 
   VOLUME = {25},
   NUMBER = {},
    YEAR = {1885},
    PAGES = {157--196},
}

@article {H-W,
    AUTHOR = {Hirzebruch, F.~and Zagier, D.},
     TITLE = {Intersection numbers of curves on Hilbert modular surfaces
and modular forms of nebentypus},
   JOURNAL = {Invent.~math.},
   FJOURNAL={Inventiones Mathematicae}, 
   VOLUME = {36},
   NUMBER = {},
    YEAR = {1976},
    PAGES = {57--113},
}

@article {PR,
    AUTHOR = {Prasad, G. and Rapinchuk, A.~S.},
     TITLE = {Computation of the metaplectic kernel},
   JOURNAL = {Publ. Math. IHES},
   FJOURNAL={Publications mathématiques de l’I.H.É.S.}, 
   VOLUME = {84},
%   NUMBER = {29},
     YEAR = {1996},
    PAGES = {91--187},
}

@article {Kron,
    AUTHOR = {Kronecker, L.},
     TITLE = {\"Uber die Anzahl der verschiedenen Klassen quadratischer Formen von negativer Determinante},
   JOURNAL = {J.~Reine Angew.~Math.},
   FJOURNAL={Journal f\"ur die reine und angewandte Mathematik}, 
   VOLUME = {57},
   NUMBER = {},
    YEAR = {1860},
    PAGES = {248--255},
}

@article {Cog,
    AUTHOR = {Cogdell, J.~W.},
     TITLE = {Arithmetic cycles on Picard modular surfaces and modular forms of nebentypus},
   JOURNAL = {J.~Reine Angew.~Math.},
   FJOURNAL={Journal f\"ur die reine und angewandte Mathematik}, 
   VOLUME = {357},
   NUMBER = {},
     YEAR = {1985},
    PAGES = {115--137},
}

@article {Fun,
    AUTHOR = {Funke, J.},
     TITLE = {Heegner divisors and nonholomorphic modular forms},
   JOURNAL = {Compositio math.},
   FJOURNAL={Compositio Mathematica}, 
   VOLUME = {133},
   NUMBER = {},
     YEAR = {2002},
    PAGES = {289--321},
}

@article {Kud1,
    AUTHOR = {Kudla, S.\ S.},
     TITLE = {Intersection numbers for quotients of the complex $2$-ball and Hilbert modular forms},
   JOURNAL = {Invent.~math.},
   FJOURNAL={Inventiones mathematicae}, 
   VOLUME = {47},
   NUMBER = {2},
     YEAR = {1978},
    PAGES = {189--208},
}

@article {Kud2,
    AUTHOR = {Kudla, S.\ S.},
     TITLE = {Algebraic cycles on Shimura varieties of orthogonal type},
   JOURNAL = {Duke math.},
   FJOURNAL={Duke Mathematical Journal}, 
   VOLUME = {86},
   NUMBER = {1},
     YEAR = {1997},
    PAGES = {39--78},
}

@article {K-MI,
    AUTHOR = {Kudla, S.\ S.~and Millson, J.\ J.},
     TITLE = {The theta correspondence and harmonic forms.\ I},
   JOURNAL = {Math.~Ann.},
   FJOURNAL={Mathematische Annalen}, 
   VOLUME = {274},
   NUMBER = {},
     YEAR = {1986},
    PAGES = {353--378},
}

@article {K-MII,
    AUTHOR = {Kudla, S.\ S.~and Millson, J.\ J.},
     TITLE = {The theta correspondence and harmonic forms.\ II},
   JOURNAL = {Math.~Ann.},
   FJOURNAL={Mathematische Annalen}, 
   VOLUME = {277},
   NUMBER = {},
     YEAR = {1987},
    PAGES = {267--317},
}

@article {K-M,
    AUTHOR = {Kudla, S.\ S.~and Millson, J.\ J.},
     TITLE = {Intersection numbers of cycles on locally symmetric spaces and Fourier coefficients of holomorphic modular forms in several complex variables},
   JOURNAL = {Publications math\'ematiques de l'I.H.\'E.S.},
   FJOURNAL={}, 
   VOLUME = {71},
   NUMBER = {},
     YEAR = {1990},
    PAGES = {121--172},
}

@article {Ku,
    AUTHOR = {Kurihara, A.},
     TITLE = {Construction of p-adic unit balls and the Hirtzbruch proportionality},
   JOURNAL = {Amer.~J.~math},
   FJOURNAL={American Journal of Mathematics}, 
   VOLUME = {102},
   NUMBER = {3},
     YEAR = {1980},
    PAGES = {565--648},
}

@book {Weil,
     AUTHOR = {Weil, A.},
     TITLE = {Adeles and algebraic groups (with appendices by M. Demazure and T. Ono)},
     SERIES={Progress in Mathematics},
     VOLUME={23},
     YEAR={1982},
     PUBLISHER= {Birkhäuser, Boston, Mass.},
}

@article {WgY,
    AUTHOR = {Wang, T.-Y.~and Yu, J.},
     TITLE = {On class number relations over function fields},
   JOURNAL = {J.~Number Theory},
   FJOURNAL={Journal of Number Theory}, 
   VOLUME = {69},
   NUMBER = {},
     YEAR = {1998},
    PAGES = {181--196},
}

@article {W-Y,
    AUTHOR = {F.-T. Wei and C.-F. Yu},
     TITLE = {Mass formula of division algebras over global function fields},
   JOURNAL = {J.~Number Theory},
   FJOURNAL={Journal of Number Theory}, 
   VOLUME = {132},
   NUMBER = {3},
     YEAR = {2012},
    PAGES = {1170--1184},
}

@article {JKY,
    AUTHOR = {Yu, J.-K.},
     TITLE = {A class number relation over function fields},
   JOURNAL = {J.~Number Theory},
   FJOURNAL={Journal of Number Theory}, 
   VOLUME = {54},
   NUMBER = {},
     YEAR = {1995},
    PAGES = {318--340},
}

@article {Bae,
    AUTHOR = {Bae, S.},
     TITLE = {On the modular equation for Drinfeld modules of rank 2},
   JOURNAL = {J.~Number Theory},
   FJOURNAL={Journal of Number Theory}, 
   VOLUME = {42},
   NUMBER = {},
     YEAR = {1992},
    PAGES = {123--133},
}

@article {B-L,
    AUTHOR = {Bae, S.~and Lee, S.},
     TITLE = {On the coefficients of the Drinfeld modular equation},
   JOURNAL = {J.~Number Theory},
   FJOURNAL={Journal of Number Theory}, 
   VOLUME = {66},
   NUMBER = {},
     YEAR = {1997},
    PAGES = {85--101},
}

@article {Hsia,
    AUTHOR = {Hsia, L.-C.},
     TITLE = {On the coefficients of modular polynomials for Drinfeld modules},
   JOURNAL = {J.~Number Theory},
   FJOURNAL={Journal of Number Theory}, 
   VOLUME = {72},
   NUMBER = {},
     YEAR = {1998},
    PAGES = {236--256},
}

@article {FYZ1,
AUTHOR = {Feng, T. and Yun, Z. and Zhang, W.},
     TITLE = {Higher Siegel--Weil formula for unitary groups: the non-singular terms},
   JOURNAL = {preprint},
   FJOURNAL={}, 
   VOLUME = {arXiv:2103.11514},
   NUMBER = {},
     YEAR = {},
    PAGES = {},
}

@article{FYZ2,
AUTHOR = {Feng, T. and Yun, Z. and Zhang, W.},
     TITLE = {Higher theta series for unitary groups over function fields},
   JOURNAL = {preprint},
   FJOURNAL={}, 
   VOLUME = {arXiv:2110.07001},
   NUMBER = {},
     YEAR = {},
    PAGES = {},
}

@article {Pap09,
AUTHOR = {Papikian, M.},
     TITLE = {Modular varieties of $\cD$-elliptic sheaves and the Weil--Deligne bound},
   JOURNAL = {J.~Reine Angew.~Math.},
   FJOURNAL={Journal f\"ur die reine und angewandte Mathematik}, 
   VOLUME = {626},
   NUMBER = {},
     YEAR = {2009},
    PAGES = {115--134},
}

\end{document}